\documentclass[11pt,a4paper]{amsart}

\usepackage[T1]{fontenc}
\usepackage[utf8]{inputenc}
\usepackage{lmodern}
\usepackage{amsmath,amssymb,amsthm,mathtools,mathrsfs}
\usepackage{enumitem}
\usepackage{microtype}
\usepackage[margin=1.12in]{geometry}
\usepackage[colorlinks=true,linkcolor=blue,citecolor=blue,urlcolor=blue,hypertexnames=false]{hyperref}

\numberwithin{equation}{section}

\newtheorem{theorem}{Theorem}[section]
\newtheorem{proposition}[theorem]{Proposition}
\newtheorem{lemma}[theorem]{Lemma}
\newtheorem{corollary}[theorem]{Corollary}
\newtheorem{example}[theorem]{Example}
\theoremstyle{definition}
\newtheorem{definition}[theorem]{Definition}
\newtheorem{remark}[theorem]{Remark}

\newcommand{\R}{\mathbb{R}}
\newcommand{\N}{\mathbb{N}}
\newcommand{\E}{\mathbb{E}}
\newcommand{\Pbb}{\mathbb{P}}
\newcommand{\calE}{\mathcal{E}}
\newcommand{\calF}{\mathcal{F}}
\newcommand{\dd}{\,\mathrm{d}}
\newcommand{\loc}{\mathrm{loc}}
\newcommand{\ann}{\mathrm{ann}}

\newcommand{\supp}{\operatorname{supp}}

\newcommand{\eps}{\varepsilon}

\newcommand{\PB}{\mathbb{P}_B}

\title[Small-time annealed LDP for one-dimensional diffusions]{Small-time annealed large deviations principle for one-dimensional diffusions in a random environment}
\author{Yiduo Wang}
\address{School of Mathematical Sciences, University of Science and Technology of China, Hefei 230026, Anhui, China}
\email{wangyiduo@mail.ustc.edu.cn}
\author{Saisai Yang}
\address{School of Mathematical Sciences, University of Science and Technology of China, Hefei 230026, Anhui, China}
\email{yangss@ustc.edu.cn}
\author{Tusheng Zhang}
\address{Department of Mathematics, University of Manchester, Manchester M13 9PL, United Kingdom}
\email{tusheng.zhang@manchester.ac.uk}

\subjclass[2020]{60F10, 60J55, 60J60, 31C25}
\keywords{random environment; small-time large deviations; Dirichlet forms; Brox diffusion; first-exit time estimates; Moser iteration}

\begin{document}
	\begin{abstract}
		In this paper, we establish a small-time annealed path large deviation principle for one-dimensional diffusions in a random environment associated with the generator
		\begin{equation*}
			{\mathcal L}_W f(x)=e^{-\rho(x,W)}\bigl(e^{a(x,W)}f'(x) \bigr)',
		\end{equation*} 
		where the coefficients $\{\rho(x,\cdot) : x \in  \R\}$ and $\{a(x,\cdot) : x \in  \R\}$ are random. We assume that for each fixed realization of the environment, $\rho$ and $a$ are continuous and locally exponentially integrable, and that the support of the associated intrinsic coordinates is compact and non-collapsing. This framework includes the extensively studied Brox diffusion:
		\begin{equation*}
			\dd X_t=\dd B_t-\frac12\dot W(X_t)\dd t,
		\end{equation*}
		where $B$ is a standard Brownian motion and $W$ is an independent two-sided Brownian motion representing the environment.
		
		The Itô–McKean representation of the diffusions and the estimates of the first exit probabilities derived via Moser iteration play a crucial role.	
	\end{abstract}

	\maketitle
	
	\section{Introduction}
	
	There has been extensive research on Markov processes in random environments since the seminal work of Sinai \cite{Sinai1982} on random walks in random media. In particular, limit theorems for such processes have been studied by many authors; see, for example, \cite{Comets,Greven,Spiliopoulos,Varadhan2003} for large deviations and \cite{Fehrman,Guo,Lejay,Rhodes} for homogenization theory.
	
	One important model of diffusions in random environments is the Brox diffusion, which serves as the continuous-time and continuous-space analogue of Sinai's random walk. Formally, it is described by the stochastic differential equation
	\begin{equation}\label{eq:formal-sde}
		\dd X_t=\dd B_t-\frac12\dot W(X_t)\dd t,
	\end{equation}
	where $B$ is a standard Brownian motion and $W$ is an independent two-sided Brownian motion representing the environment. Since $\dot W$ is a spatial white noise, \eqref{eq:formal-sde} is not a classical stochastic differential equation in any sense.
	
	The purpose of this work is to establish the small-time annealed path large deviation principle (LDP) for one-dimensional diffusions in random environments, in particular for  the Brox diffusion. More precisely, we consider the diffusion process $X^W$ associated with the symmetric operator
	\begin{equation*}
		{\mathcal L}_W f(x)=e^{-\rho(x,W)}\bigl(e^{a(x,W)}f'(x) \bigr)',
	\end{equation*} 
	where $W$ represents the random environment belonging to some probability space.
	Our main goal is to prove a small-time annealed LDP for the rescaled process
	\[
	X_{\varepsilon}(t)=X^W(\varepsilon t),\qquad 0\le t\le 1.
	\]
	The quenched LDP concerns the behavior of the diffusion conditioned on a fixed realization of the environment $W$, while the annealed principle averages over the environmental randomness.

	\vskip 0.3cm
	
	One-dimensional diffusions in Brownian potentials have been studied from various perspectives. The localization phenomenon was first established by Brox~\cite{Brox1986}, who introduced the model as a continuous-time analogue of Sinai's random walk~\cite{Sinai1982}. In particular, the process exhibits a characteristic $\log^2 t$ scaling, which starkly contrasts with the diffusive scaling of classical Brownian motion. Long-time asymptotic properties, including transient behavior and stable limit laws, were subsequently examined by Kawazu and Tanaka~\cite{KawazuTanaka1997} and by Hu, Shi, and Yor~\cite{HuShiYor1999}. On the analytical side, the singular stochastic differential equation formally associated with the Brox diffusion---whose drift involves spatial white noise---was given a rigorous formulation by Hu, L\^e, and Mytnik~\cite{HuLeMytnik2017}.
	
	Large deviations for one-dimensional random walks in random environments were first studied by Greven and den Hollander~\cite{Greven} in the quenched setting. Since then, large and moderate deviations for diffusions in Brownian potentials have also attracted considerable attention, primarily in the long-time regime. Both quenched and annealed large deviation principles as $t\to\infty$ have been established; see, for example, Taleb~\cite{Taleb2001}, Hu and Shi~\cite{HuShi2004}, and Faraud~\cite{Faraud2011}.
	
	On the other hand, the study of small-time asymptotics of diffusion processes originated with Varadhan~\cite{Varadhan1967,Varadhan67} and was later developed within the Freidlin--Wentzell framework. Subsequently, small-time LDPs have been obtained for a wide class of stochastic systems. In~\cite{Chen}, the authors prove small-time LDPs for Brownian motion with irregular drift by showing that its law is exponentially equivalent to that of Brownian motion. In~\cite{Zhang1}, the third author derives small-time LDPs for infinite-dimensional stochastic evolution equations. And as an application, the small-time LDP can be used to derive Varadhan-type small-time asymptotics for the transition density. In contrast, much less is known for diffusions in random environments. The irregularity of the Brownian potential hinders the  application of classical small-time techniques. To the best of our knowledge, a small-time annealed path LDP for diffusions in random environment has not been studied in the literature. The present work aims to fill this gap.

	\vskip 0.3cm
	
	We emphasize that, while the contraction principle for large deviations and exponentially equivalent transformations for singular drifts have proved useful in previous small-time LDP studies, they are not applicable in our framework, owing to the lack of regularity assumptions on the coefficients and the inherent randomness of the environment. To establish the lower bound of the LDP, we rely on the It\^o--McKean representation of the diffusions and restrict the random environment to a suitable set with positive probability. For the upper bound, we employ Moser iteration to derive a local Davies--Grigor'yan type estimate for the one-dimensional Dirichlet form
	\[
	\calE_W(f,g)=\int_{\R} f'(x)g'(x)e^{a(x,W)}\,\dd x,
	\]
	which yields the estimates of the first exit probabilities from a ball in terms of the intrinsic distance. This estimate is then averaged over the environment to obtain the desired result. Our methodology avoids differentiability assumptions on the coefficients by relying on Dirichlet form theory; in addition to local exponential integrability, the upper-bound argument uses compactness and non-collapse of the environmental intrinsic coordinates.
	
	\vskip 0.3cm
	
	We now introduce some notation used throughout the article.
	\begin{itemize}
		\item \(|\cdot|\) denotes the Euclidean norm in \(\R\). For \(1 \le p \le \infty\), \(\| \cdot \|_{L^p(\nu)}\) denotes the \(L^p\) norm on \(\R\) with respect to the measure \(\nu\).
		\item Let \(C_0([0,1])\) be the space of all continuous functions \(f:[0,1]\to\mathbb{R}\) with \(f(0)=0\). For an interval \(D\), \(C^1(D)\) denotes the set of continuously differentiable functions on \(D\), and \(C_c^1(D)\) is the subset of compactly supported functions in \(C^1(D)\). Moreover, \(H_0^1(D)\) stands for the Sobolev space over \(D\).
		\item For a strictly increasing function \(f\), we denote its inverse by \(f^{-1}\).
		\item The symbol \(c\) denotes a generic positive constant, independent of \(W\), whose value may change from line to line.
	\end{itemize}
	
	\vskip 0.3cm
	
	The remainder of this paper is organized as follows. Section~\ref{sec:main-results} presents the basic hypotheses and states the main result on the path-space small-time LDP. Section~\ref{sec:annealed-lower} establishes the annealed lower bound via the Itô–McKean representation and Schilder's lower bound. In Section~\ref{sec:local-exit-proof}, we derive quenched first exit time estimates using Moser iteration. Based on these estimates, we obtain exponential tightness in Subsection~\ref{sec:exponential-tightness} and establish the finite-dimensional upper bounds in Subsection~\ref{Finite-dimensional}. Finally, with these results at hand, we prove the annealed upper bound by a finite open covering argument in Subsection~\ref{Path-space}, thereby completing the proof of the LDP.

	\section{Preliminaries and main results}\label{sec:main-results}
	Let $(\Omega, \mathcal{F}, \Pbb)$ be a given probability space; an element $W \in \Omega$ is called an environment. We will use the symbol $\E$ to denote the expectation with respect to $\Pbb$. Consider $\R$-valued functions $\rho$ and $a$ defined on $\R \times \Omega$, which are $ \mathcal{B}(\R) \times \mathcal{F}$-measurable. For a fixed realization of $W$, let $X^W$ be a one-dimensional diffusion process associated with the generator
	\begin{equation*}
		{\mathcal L}_W f(x)= e^{-\rho(x,W)}\bigl(e^{a(x,W)}f'(x) \bigr)'.
	\end{equation*} 
	
	Under assumption $H_{1}$ below, the diffusion process $X^W$ can be explicitly constructed via the scale transformation and a time change of a Brownian motion. In detail, let $B:=(B_t)_{t\ge 0}$ be a one-dimensional standard Brownian motion starting from $S_W(x)$ on some given probability space, independent of the random environment $(\Omega, \mathcal{F}, \Pbb)$. For a fixed realization of $W$, define for each $x\in\R$,
	\begin{align}\nonumber
		&S_W(x):=\int_0^x e^{-a(z,W)}\dd z, \\
		&\sigma(x) := \sqrt{2}   \exp\left(-\frac{\rho(S_W^{-1}(x), W)+a(S_W^{-1}(x),W)}{2}\right) \nonumber \\
		&\phi(t):= \int_0^t  \sigma(B(s))^{-2} \dd s. \nonumber  
	\end{align}
	We stress that  $\sigma(x)$, $\phi(t)$ both depend on the random environment $W$.
	Then, by Lemma \ref{l3.1} below, the Itô–McKean construction of the quenched diffusion $X^W$ starting from $x$ is given by the scale-time representation
	\begin{equation}\label{eq:brox-time-change-section2}
		X^W(t)=S_W^{-1}\bigl(B(\phi^{-1}(t))\bigr),\qquad t\ge0.
	\end{equation}
	Equivalently, it is the symmetric diffusion associated with the following Dirichlet form. Set
	\begin{equation*}
		\mu_W(dx)=e^{\rho(x,W)}\,dx.
	\end{equation*}
	On $L^2(\R,\mu_W)$, consider the bilinear form
	\begin{equation*}
		\mathcal E_W(f,g)=\int_{\mathbb R} e^{a(x,W)-\rho(x,W)} f'(x)g'(x)  \mu_W(dx),
	\end{equation*}
	with domain
	\begin{equation*}
		\calF_W=\bigl\{f\in L^2(\mu_W): f\text{ is locally absolutely continuous and } e^{\frac{a(\cdot,W)-\rho(\cdot,W)}{2}}f'\in L^2(\mu_W)\bigr\}.
	\end{equation*}
	For a fixed realization of the random environment $W$, we denote by $P_x^W$ the quenched law of the diffusion started from $x$. Its annealed law is
	\[
		P_x^{\ann}(A):=\int_\Omega P_x^W(A)\,\Pbb(\dd W),
	\]
	for every measurable path event $A$.
	
	\vskip 0.3cm
	
	\newpage
	
	We now list the assumptions under which our main results are derived.
	
	\noindent \textbf{HYPOTHESIS ($H_{1}$) (Ellipticity and conservativeness).}
	
	\hangindent=2em \hangafter=1
	For $\Pbb$-a.e. $W \in \Omega$, the coefficients $a(\cdot, W)$ and $\rho(\cdot, W)$ are continuous. Moreover,
	\begin{align} \label{2.2}
		\int_0^\infty e^{-a(z,W)}\dd z = \int_{-\infty}^0 e^{-a(z,W)}\dd z = +\infty, \ \ \Pbb\text{-a.e.}
	\end{align}
	
	\noindent \textbf{HYPOTHESIS ($H_{2}$) (Exponential integrability).}
	
	\hangindent=2em \hangafter=1
	For every $c,K \ge 1$,
	$$\E\left [\exp\left(c\sup_{|x|\le K} \left(| \rho(x, \cdot)|+|a(x,\cdot)| \right ) \right)\right]<\infty.$$
	
	\noindent \textbf{HYPOTHESIS ($H_{3}$) (Exponential boundedness).}
	
	\hangindent=2em \hangafter=1
	\begin{align} \nonumber
		&\ \ \ \ \lim_{R \to + \infty}\operatorname{ess\,inf}\limits_{W \in \Omega} \int_0^{R} \exp\left(\frac{\rho(z,W)-a(z,W)}{2}\right) \dd z\nonumber \\
		&=\lim_{R \to - \infty}\operatorname{ess\,inf}\limits_{W \in \Omega} \int_R^{0} \exp\left(\frac{\rho(z,W)-a(z,W)}{2}\right) \dd z=+\infty.\nonumber
	\end{align}

\noindent \textbf{HYPOTHESIS ($H_{4}$) (Compactness and non-collapse of the intrinsic coordinates).}

For each environment $W$, define
\[
	q_W(x):=\sqrt{\frac{\exp\{\rho(x,W)-a(x,W)\}}{2}},
	\qquad
	\Lambda_W(x):=\int_0^xq_W(z)\,\dd z,
	\qquad x\in\R.
\]
We regard $\Lambda_W$ as a random variable with values in
$C_{\loc}(\R)$, endowed with the topology of locally uniform
convergence, and set
\[
	\mathscr L:=\supp\operatorname{Law}(\Lambda_W)
	\subset C_{\loc}(\R).
\]
Then $\mathscr L$ is compact in $C_{\loc}(\R)$ and that every
$\Lambda\in\mathscr L$ is strictly increasing. 

\begin{remark}
	\begin{enumerate}
		\item[(1)] In the case of Brox diffusion,
		$\rho(x,W)=\log 2-W(x)$ and $a(x,W)=-W(x)$. Then
		$q_W\equiv1$ and $\Lambda_W(x)=x$, so $\mathscr L=\{\operatorname{id}\}$
		and $H_4$ is immediate.
		\item[(2)] By Feller's test for non-explosion, condition \eqref{2.2}
		implies conservativeness of the process $X$, which yields that $\phi$
		is a strictly increasing homeomorphism of $\R_+$ onto $\R_+$.
		\item[(3)] Assumption $H_3$ is mild in the sense that if the diffusion
		process $X$ is independent of the environment and $\rho\equiv0$, then
		the exponential boundedness condition
		\[
			\lim_{R\to\infty}\limsup_{\varepsilon\downarrow0}\varepsilon
			\log P_0^{\ann}\left(\sup_{0\le t\le1}|X_{\varepsilon}(t)|>R\right)=-\infty
		\]
		implies that
		\[
			\int_0^{\infty}e^{-a(z)/2}\,\dd z
			=
			\int_{-\infty}^{0}e^{-a(z)/2}\,\dd z
			=+\infty.
		\]
	\end{enumerate}
\end{remark}
	\vskip 0.4cm
	
	For the annealed upper bound we need quenched exit-probability estimates with explicit dependence on the environment. The argument combines the Davies--Grigor'yan--Takeda method \cite{Grigoryan1994,Takeda1989} with Sturm's local Moser mean-value estimate \cite{Sturm1995}. In particular, all constants must be controlled in terms of
	\begin{equation}\label{01}
		M(K):=\sup_{x\in K}| \rho(x,W)|+ \sup_{x\in K}|a(x,W)|.
	\end{equation}
	The general results in \cite{Takeda1989, Grigoryan1994, Sturm1995} provide the underlying methodology but not the explicit form required for averaging over the Brownian environment. We therefore provide a self-contained proof of this estimate with explicit constants in Section~\ref{sec:local-exit-proof}.
	
	Define the intrinsic distance induced by the Dirichlet form $\mathcal E_W$
	\begin{equation}\label{eq:distance}
		d_W(x,y):=\left|\int_x^y \sqrt{\frac{\exp(\rho(z,W)-a(z,W))}{2}} \, dz \right|.
	\end{equation}
	For $x\in {[-R,R]}$ and $r>0$, let
	\[
	D_W(x,r):=\{y\in\mathbb R:d_W(x,y)<r\}.
	\]
	Let $ \tau_D$ be the first exit time of $X^W$ from an open set $D \subset \R$, given by
	\[ \tau_D:=\inf\{s\ge0:X_s^W\notin D\}.\]
	Then we have the following result on the first exit probability, which will be proved in Section \ref{sec:local-exit-proof}. These estimates of exit probabilities  will play a fundamental role in the upper bound LDP argument.
	
	\begin{theorem}\label{thm:local-exit}
		Under assumption $H_1$, for each $\theta \in (0,1)$, there exist deterministic constants
		\[
		C=C(\theta)>0,\qquad N=N(\theta)>0,
		\]
		such that for $\Pbb$-a.e. $W$, one has for any $0<t<1, \ r>0$ and $ x \in \R$
		\begin{equation}\label{eq:local-exit}
			P_x^W(\tau_{D_W(x,r)}\le t)
			\le C t^{-N}\exp\left(C M(D_W(x,r)) \right)
			\exp\left\{-a_\theta\frac{r^2}{t}\right\},
		\end{equation}
		where the constant $M(D_W(x,r))$ was defined in (\ref{01}) and 
		\begin{equation*}
			a_\theta=\frac{(1-\theta)^2}{2(1+\theta)}.
		\end{equation*}
		In particular, $a_\theta\uparrow 1/2$ as $\theta\downarrow0$.
	\end{theorem}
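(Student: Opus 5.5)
We will follow the Davies--Grigor'yan--Takeda scheme, localized to the ball $D:=D_W(x,r)$, and keep every constant in the form $e^{cM(D)}$. Because the statement is one-dimensional, we first change to intrinsic coordinates. Set $y=\Lambda_W(\cdot)$ (shifted so that $x\mapsto 0$). Then $D$ becomes the interval $(-r,r)$ in the new variable. Up to time-independent factors, the form $\calE_W$ and the measure $\mu_W$ become
\[
\int \tilde f'(y)^2\,\nu(y)\dd y,\qquad \nu(y)\dd y ,
\]
with a density $\nu$ bounded above and below on $(-r,r)$ by $e^{\pm cM(D)}$. Indeed, $e^{a-\rho}/q_W$ and $e^{\rho}/q_W$ are both controlled by $e^{cM(D)}$. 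In these coordinates the intrinsic distance is exactly $|y-y'|$.

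\textbf{Step 1: the exit event via a boundary layer.} For $0<\theta<1$, split $D$ into an inner ball $D_W(x,\theta r)$ and an annulus. Let $u(t,z):=P_z^W(\tau_{D}\le t)$, which is a subsolution-type object. We use the standard bound
\[
P_x^W(\tau_D\le t)\le 2\sup_{s\le t}\,\sup_{z\in \partial D}P^W_z\bigl(X_s\in D_W(x,\theta r)\bigr),
\]
or equivalently we control the mass of the killed semigroup $P^D_s\mathbf 1_{\text{inner}}$ that reaches the boundary. This reduces the problem to a heat-kernel-type off-diagonal bound between points at intrinsic distance at least $(1-\theta)r$.

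\textbf{Step 2: Davies' perturbation.} Take a Lipschitz weight $\psi$ with $|\psi'|\le1$ in intrinsic distance. For the twisted semigroup $e^{\alpha\psi}P^D_s e^{-\alpha\psi}$, the energy identity gives $L^2(\mu_W)$ growth of at most $e^{\alpha^2 s/2}$. This holds with no dependence on $M$, because $|\nabla\psi|^2$ measured by the form density is at most $1$, and that is exactly the role of $d_W$. Choosing $\alpha=(1-\theta)r/t$ and pairing an $L^2$ function supported near $\partial D$ with one supported on the inner ball produces the Gaussian factor $\exp\{-(1-\theta)^2r^2/(2t)\}$. The extra factor $(1+\theta)^{-1}$ will come from optimizing $\alpha$ after also paying for the $L^2\to L^\infty$ step over times in $[t,(1+\theta)t]$ (a Grigor'yan integrated maximum-principle variant). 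This is where we get $a_\theta=(1-\theta)^2/(2(1+\theta))$.

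\textbf{Step 3: $L^2\to L^\infty$ via Moser iteration (main obstacle).} To convert the $L^2$ bound into a pointwise bound at $x$, we apply Sturm-type local mean-value inequalities for the parabolic subsolution on parabolic cylinders of radius comparable to $\sqrt t$ (capped by $r$). In one dimension we use the Sobolev inequality $\|f\|_\infty^2\le |I|\int_I f'^2$ on an interval, combined with Caccioppoli estimates on a shrinking sequence of cylinders. Each comparison between $\nu\dd y$ and Lebesgue measure costs $e^{cM(D)}$, and the iteration runs over finitely many geometric levels. So the total cost is $e^{CM(D)}$ times a polynomial factor $t^{-N}$ coming from the volume normalization (note $r$ may be larger than $\sqrt t$; in that case we use the smaller radius $\sqrt t$). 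Keeping the exponent of $e^{M}$ independent of the iteration depth is the delicate point. We handle it by using only the ratios $\sup\nu/\inf\nu$ and the ratio of form density to measure density, each bounded once by $e^{2M(D)}$ and raised to a geometric power series that sums to a constant depending only on $\theta$. Finally, collecting Steps 1--3 gives \eqref{eq:local-exit}. Hypothesis $H_1$ is used only for continuity, so that $M(D)<\infty$ and $\Lambda_W$ is a homeomorphism, and for conservativeness, so that the killed semigroup is the right object.
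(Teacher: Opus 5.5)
Your ingredients are the right ones: intrinsic coordinates, an $L^2$ Gaussian estimate in $d_W$, and a Moser mean-value step whose $e^{cM}$ constants are raised to a summable series of exponents. The gap is in Step 1, which is supposed to connect the exit probability to an $L^2$ quantity. The ``standard bound'' $P_x^W(\tau_D\le t)\le 2\sup_{s\le t}\sup_{z\in\partial D}P_z^W(X_s\in D_W(x,\theta r))$ is false as written. Take Brownian motion ($a\equiv0$, $\rho\equiv\log2$, so $d_W(x,y)=|x-y|$), $x=0$, $\theta<1/2$, fixed $t$, and let $r\downarrow0$. The left side tends to $1$, while the right side stays below $4\theta/(\sqrt{2\pi e}(1-\theta))<1$ for every $r$.

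Small $r^2/t$ is harmless, since a constant is allowed. But nothing supports the bound for large $r^2/t$ either. The strong Markov argument at $\tau_D$ that it seems modeled on gives
\[
P_x^W(\tau_D\le t)\le P_x^W\bigl(X_t^W\notin D_W(x,\theta r)\bigr)+\sup_{s\le t}\sup_{z\in\partial D}P_z^W\bigl(X_s^W\in D_W(x,\theta r)\bigr).
\]
The first term decays only like $\exp\{-\theta^2r^2/(2t)\}$, exactly so for Brownian motion. Balancing the two terms therefore gives at best $\exp\{-r^2/(8t)\}$. That loss is fatal, because Proposition~\ref{prop:product} and the LDP upper bound need $a_\theta\uparrow1/2$.

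Without Step 1, your Step 2 has nothing to act on. Davies' twisted semigroup controls pairings $\langle P_sf,g\rangle$ between separated sets. Neither $P_x^W(\tau_D\le t)$ nor its $\mu_W$-average over the inner ball is such a pairing; that average is the killed mass of $P_s^D$ started from the inner ball, which you call ``equivalent''. The mass of $P^D_s\mathbf 1_{\mathrm{inner}}$ found near $\partial D$ is not the mass that has been killed.

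The missing idea is to skip the reduction and work directly with $v(s,y):=P_y^W(\tau_D\le s)$. This is a weak solution in $D$ with $v(0,\cdot)=0$ and $0\le v\le1$, and it is nondecreasing in $s$.
\begin{enumerate}
\item Apply Grigor'yan's integrated maximum principle to $v$, with weight $\exp\{d_D(y)^2/(s+\alpha t)\}$ and a cutoff vanishing near $\partial D$. Here $d_D$ is the intrinsic distance to $D^c$, and the weight is admissible because $2e^{a-\rho}|d_D'|^2\le1$. This gives $\int_{D_W(x,\beta r)}v(s,y)^2\,\mu_W(\dd y)\le C\mu_W(D)(1+\kappa+\kappa^{-1})e^{-\kappa}$ with $\kappa=(1-\beta)^2r^2/s$, and no $M$ appears in the exponent.
\item Apply the Moser mean-value inequality on the intrinsic cylinder of radius $\frac{\beta}{16}\sqrt t$ around $(t,x)$. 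Once $z:=r^2/t>64\beta^{-2}$, this cylinder lies in $(0,(1+\beta)t)\times D_W(x,\beta r)$. Using monotonicity of $v$ in time to evaluate the $L^2$ bound at $(1+\beta)t$, you get $v(t,x)\le C_\beta e^{CM(D)}z^{1/4}(1+z+z^{-1})^{1/2}e^{-a_\beta z}$.
\item Choosing $\beta<\theta$ absorbs the polynomial factor and yields $a_\theta$.
\end{enumerate}
Every cylinder in this argument stays inside $D$, so only $M(D_W(x,r))$ enters. By contrast, the pointwise bounds at $z\in\partial D$ that your scheme needs would require cylinders leaving $D$.
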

	
	As a consequence of Theorem~\ref{thm:local-exit}, we obtain the following product estimate.
	
	\begin{proposition}\label{prop:product}
		Under assumptions $H_1$ and $H_2$, for each $n\in\N$ and positive constants $R, \ r_i$, $h_i$, $1\le i\le n$,
		\begin{equation}\label{eq:product-bound}
			\limsup_{\varepsilon\downarrow0}\varepsilon
			\log \E\left[\prod_{i=1}^n \sup_{|x|\le R}P_x^W\left(\tau_{D_W(x,r_i)}\le \varepsilon h_i ; \sup_{s\le\varepsilon h_i}|X_s^W|\le R\right)\right]
			\le -\sum_{i=1}^n\frac{r_i^2}{2h_i},
		\end{equation}
		where $\E$ denotes expectation with respect to the environment.
	\end{proposition}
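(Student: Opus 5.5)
We bound each factor by Theorem~\ref{thm:local-exit}, take the product, and average over the environment with H\"older's inequality and $H_2$. The key point is that the localization $\sup_{s\le \varepsilon h_i}|X^W_s|\le R$ lets us replace the environment-dependent quantity $M(D_W(x,r_i))$ by $M([-R,R])$, which $H_2$ controls.

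\textbf{Step 1 (localization).} Fix $i$ and $|x|\le R$. On the event $\{\sup_{s\le \varepsilon h_i}|X^W_s|\le R\}\cap\{\tau_{D_W(x,r_i)}\le \varepsilon h_i\}$, the path exits $D_W(x,r_i)$ before leaving $[-R,R]$. Continuity of paths then gives the exit from the smaller set
\[
D^R_i(x):=D_W(x,r_i)\cap(-R,R)
\]
by time $\varepsilon h_i$, through a boundary point of $D_W(x,r_i)$.

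We cannot simply restrict to $D^R_i(x)$, because exit through $\pm R$ is not penalized by $r_i$. So we do not estimate $\tau_{D^R_i(x)}$ directly. Instead we rerun the Davies--Grigor'yan--Moser argument of Section~\ref{sec:local-exit-proof} for the process killed on leaving $(-R-1,R+1)$. Alternatively, if Theorem~\ref{thm:local-exit} is proved through the part process on $D_W(x,r)$ and only uses the coefficients on the region actually visited, we note that its constant depends only on $M(\overline{D_W(x,r)}\cap[-R,R])\le M([-R,R])$ whenever the event also forces the path to stay in $[-R,R]$.

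In either form we aim for the bound
\[
P_x^W\Bigl(\tau_{D_W(x,r_i)}\le \varepsilon h_i;\ \sup_{s\le \varepsilon h_i}|X_s^W|\le R\Bigr)\le C(\varepsilon h_i)^{-N}e^{C M([-R,R])}e^{-a_\theta r_i^2/(\varepsilon h_i)},
\]
uniformly in $|x|\le R$, for $\varepsilon h_i<1$. I expect this step to be the main obstacle. The statement of Theorem~\ref{thm:local-exit} involves $M(D_W(x,r))$, and this set can be very large when $q_W$ is small. A crude fix is to intersect with $[-R,R]$ inside the proof, using that the cutoff functions in the Davies method can be chosen to be functions of $d_W(x,\cdot)$ truncated outside $[-R,R]$.

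\textbf{Step 2 (product and averaging).} Multiplying the $n$ bounds gives
\[
\prod_{i=1}^n\sup_{|x|\le R}P_x^W(\cdots)\le C^n\prod_i(\varepsilon h_i)^{-N}\exp\bigl(nC\,M([-R,R])\bigr)\exp\Bigl(-a_\theta\sum_i\frac{r_i^2}{\varepsilon h_i}\Bigr).
\]
The constants $C,N$ are deterministic, so taking $\E$ only affects the term $\E[e^{nCM([-R,R])}]$. This expectation is finite by $H_2$ applied with $c=nC$ and $K=\max(R,1)$. No H\"older splitting is needed, since the only random factor is this single exponential.

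\textbf{Step 3 (limit).} Take $\varepsilon\log$ of the averaged bound. The prefactor contributes $\varepsilon\bigl(n\log C-N\sum_i\log(\varepsilon h_i)+\log\E[e^{nCM}]\bigr)\to0$ as $\varepsilon\downarrow0$. Hence
\[
\limsup_{\varepsilon\downarrow0}\varepsilon\log \E[\cdots]\le -a_\theta\sum_i r_i^2/h_i.
\]
Letting $\theta\downarrow0$, so that $a_\theta\uparrow1/2$, yields \eqref{eq:product-bound}.
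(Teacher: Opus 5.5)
Your Steps~2 and~3 are exactly the paper's argument. Each factor is bounded by Theorem~\ref{thm:local-exit} with $M_R$ in place of $M(D_W(x,r_i))$, the $n$ bounds are multiplied, $H_2$ is used for the single random factor $\E[e^{nCM_R}]$, and then $\varepsilon\downarrow0$ followed by $\theta\downarrow0$.

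The one thing missing is Step~1, which you leave open. The paper closes it in one line with the ``local property of $X^W$'', made explicit in the proof of Corollary~\ref{Hitting}. You replace $a,\rho$ by the frozen coefficients $\tilde a(y):=a((-R)\vee y\wedge R,W)$ and $\tilde\rho(y):=\rho((-R)\vee y\wedge R,W)$.
\begin{itemize}
\item Run the It\^o--McKean construction with the same $B$. The scale functions agree on $[-R,R]$, and the time changes agree while $B$ stays in $[S_W(-R),S_W(R)]$. So the original and frozen diffusions coincide on $[0,t]$ on the event $\{\sup_{s\le t}|X^W_s|\le R\}$.
\item On $\{\tau_{D_W(x,r)}\le t\}$ that common path hits some $y\in\partial D_W(x,r)\cap[-R,R]$. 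The frozen intrinsic distance $\tilde d$ equals $d_W$ on $[-R,R]$, so $\tilde d(x,y)=r$. Hence the frozen process has left its own ball $\tilde D(x,r)$ by time $t$.
\item Theorem~\ref{thm:local-exit} applies to the frozen environment, since its proof works for any fixed continuous coefficients and $\tilde a$ is constant outside $[-R,R]$. Because $\tilde a,\tilde\rho$ only take values of $a,\rho$ on $[-R,R]$, one has $\tilde M(\tilde D(x,r))\le M_R$. This gives exactly your displayed Step~1 bound, uniformly in $|x|\le R$.
\end{itemize}
So there is no need to rerun the Davies--Moser argument for a killed process.

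Do not rely on your alternative remark that the constant in Theorem~\ref{thm:local-exit} involves only the coefficients on the region actually visited. As the theorem is proved, the constant comes from the volume ratio $\mu_W(D\setminus A_\beta)/\mu_W(A_\beta)$ and from Moser constants on $A_\beta$. Both depend on the coefficients over all of $D_W(x,r)$, possibly outside $[-R,R]$, which is exactly why the freezing step is needed.
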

	
	\vskip 0.4cm
	
	We now state the main result.
	
	Define the rescaled process
	\begin{equation*}
		X_{\varepsilon}(t)=X^W(\varepsilon t),\qquad 0\le t\le1,
	\end{equation*}
	under the annealed law
	\begin{equation*}
		P_0^{\ann}(A):=\int_\Omega P_0^W(A)\,\Pbb(\dd W),
		\qquad A\in\mathcal B\bigl(C_0([0,1];\R)\bigr).
	\end{equation*}
	For later use, let
\[
	\mathbb H_0^1([0,1])
	:=\left\{h\in C_0([0,1]):h\text{ is absolutely continuous and }
	h'\in L^2(0,1)\right\},
\]
and define the Cameron--Martin energy
\[
	\mathcal I_{\mathrm{CM}}(h):=
	\begin{cases}
		\displaystyle\frac12\int_0^1|h'(t)|^2\,\dd t,
		&h\in\mathbb H_0^1([0,1]),\\[1ex]
		+\infty,&\text{otherwise}.
	\end{cases}
\]
For a fixed environment $W$, define the quenched action
\[
	I_W(f):=\mathcal I_{\mathrm{CM}}(\Lambda_W\circ f),
	\qquad f\in C_0([0,1];\R).
\]
The annealed action is the support relaxation
\begin{equation}\label{eq:rate-function}
	I(f):=\min_{\Lambda\in\mathscr L}
	\mathcal I_{\mathrm{CM}}(\Lambda\circ f),
	\qquad f\in C_0([0,1];\R).
\end{equation}
The minimum is attained because $\mathscr L$ is compact and the
Cameron--Martin energy is lower semicontinuous for uniform convergence.
In general, $I$ may be strictly smaller than
$\operatorname*{ess\,inf}_W I_W$; the relaxation permits the optimizing
path to vary continuously with the approximating environment.

	\begin{theorem}[Small-time annealed path LDP]\label{thm:path-ldp}
		Assume hypotheses $H_1$--$H_4$ hold. Under $P_0^{\ann}$, the family $X_\varepsilon=(X_{\varepsilon}(t))_{0\le t\le1}$ satisfies a large deviation principle on $C_0([0,1];\R)$ with speed $\varepsilon^{-1}$ and the good rate function $I$ given in \eqref{eq:rate-function}. Equivalently, for every open set $G\subset C_0([0,1];\R)$ and every closed set $F\subset C_0([0,1];\R)$,
		\begin{align}
			\liminf_{\varepsilon\downarrow0}\varepsilon\log P_0^{\ann}(X_\varepsilon\in G)
			&\ge -\inf_{\varphi\in G}I(\varphi),\label{eq:ldp-lower}\\
			\limsup_{\varepsilon\downarrow0}\varepsilon\log P_0^{\ann}(X_\varepsilon\in F)
			&\le -\inf_{\varphi\in F}I(\varphi).\label{eq:ldp-upper}
		\end{align}
	\end{theorem}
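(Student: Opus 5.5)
The proof splits into the lower bound \eqref{eq:ldp-lower}, exponential tightness, and the upper bound \eqref{eq:ldp-upper}. We also need that $I$ is a good rate function. Lower semicontinuity and compactness of level sets follow from the compactness of $\mathscr L$ in $C_{\loc}(\R)$ together with a uniform modulus for the inverses $\Lambda^{-1}$. This modulus is provided by the non-collapse part of $H_4$ and a compactness argument: if $\Lambda_n\to\Lambda$ locally uniformly with $\Lambda$ strictly increasing, then the $\Lambda_n^{-1}$ converge locally uniformly. With it, bounded energy of $\Lambda\circ f$ forces equicontinuity of $f$.

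\textbf{Lower bound.} It suffices to show, for $\varphi\in G$ with $I(\varphi)<\infty$ and $\delta>0$, that $\liminf\varepsilon\log P_0^{\ann}(\|X_\varepsilon-\varphi\|<\delta)\ge -I(\varphi)-\eta$. Choose $\Lambda\in\mathscr L$ attaining the minimum in \eqref{eq:rate-function}. Since $\Lambda$ lies in the support of $\mathrm{Law}(\Lambda_W)$, the event $A_\kappa:=\{W:\sup_{|x|\le K}|\Lambda_W(x)-\Lambda(x)|<\kappa\}$ has positive probability for every $\kappa>0$. We further intersect $A_\kappa$ with $\{M([-K,K])\le L\}$, which still has positive probability for large $L$. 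On this set we use the Itô–McKean representation \eqref{eq:brox-time-change-section2}. The key observation is that $\Lambda_W\circ X^W(\varepsilon\cdot)$ should behave like a Brownian motion run at speed $\varepsilon$: $\Lambda_W\circ S_W^{-1}$ is the natural scale-time coordinate, in which the diffusion has unit quadratic variation up to the time change. Precisely, $Y=\Lambda_W(X^W)$ is a semimartingale with $d\langle Y\rangle_t=dt$, because $(\Lambda_W'\circ S_W^{-1})^2 (S_W')^2\cdot\frac{\sigma^2}{...}$ reduces to $q_W^2 e^{-2a}\cdot 2e^{a-\rho}\cdots=1$. It is therefore a time change of $B$ with unit clock. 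Concretely, $Y(\varepsilon t)=\beta(\varepsilon t)$ for a Brownian motion $\beta$, up to the correction coming from the nonlinear map $\Lambda_W\circ S_W^{-1}$; this nonlinear map produces no drift only if the environment is smooth, so in general I would instead argue directly as follows. Take $\psi=\Lambda\circ\varphi$. Schilder's lower bound gives probability at least $e^{-(I(\varphi)+\eta)/\varepsilon}$ for the event that $\sqrt\varepsilon B$, transported via the explicit maps $S_W$ and $\phi$, stays in a tube. On $A_\kappa$ these maps are uniformly controlled in terms of $L$, so the tube for $B$ maps into the $\delta$-tube around $\varphi$. Integrating over $A_\kappa$ costs only the factor $\Pbb(A_\kappa)>0$, which is negligible at exponential scale.

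\textbf{Upper bound.} First, exponential tightness. Using $H_3$ (uniform growth of $d_W$ at $\pm\infty$) and Theorem~\ref{thm:local-exit}, the probability $P_0^{\ann}(\sup_t|X_\varepsilon|>R)$ is superexponentially small as $R\to\infty$. Modulus-of-continuity estimates then follow from Proposition~\ref{prop:product} with the strong Markov property applied on a partition of $[0,1]$. Second, finite-dimensional upper bounds. For times $0=t_0<\dots<t_n$ and the event that $X_\varepsilon(t_i)$ lies near $x_i$, the strong Markov property and Proposition~\ref{prop:product} bound the probability by $\exp(-\varepsilon^{-1}\sum_i r_i^2/(2h_i))$. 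Here $h_i=t_i-t_{i-1}$ and $r_i=d_W(x_{i-1},x_i)$ is slightly reduced. Since $d_W(x,y)=|\Lambda_W(y)-\Lambda_W(x)|$, this equals $\sum_i|\Lambda_W(x_i)-\Lambda_W(x_{i-1})|^2/(2h_i)$. To average over $W$, cover the compact $\mathscr L$ by finitely many small neighbourhoods $U_j$ and split $\E$ over them; a finite sum does not change exponential rates. On each $U_j$ the exponent is at least $\min_{\Lambda\in\bar U_j}\sum_i|\Lambda(x_i)-\Lambda(x_{i-1})|^2/(2h_i)$ minus a small error. Third, for closed $F$, exponential tightness reduces to compact $F$. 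Cover $F$ by finitely many small balls and, for each ball, choose a partition fine enough that the discrete energy of $\Lambda\circ f$ approximates $\mathcal I_{\mathrm{CM}}(\Lambda\circ f)$ uniformly over $\Lambda$ in a neighbourhood. This yields $-\inf_F I$.

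\textbf{Main obstacle.} The hardest step is this last interchange of the minimum over $\Lambda\in\mathscr L$ with the limits in partition and ball size. Lower semicontinuity of $(\Lambda,f)\mapsto\mathcal I_{\mathrm{CM}}(\Lambda\circ f)$ jointly on $\mathscr L\times C_0$ has to be combined with the fact that discrete energies increase to the continuum energy, through a Dini/compactness argument over $\mathscr L\times F$. A further technical point is that Proposition~\ref{prop:product} requires the paths to stay in $[-R,R]$; the exponential tightness estimate supplies this.
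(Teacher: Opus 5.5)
Your upper bound follows the paper's route. It uses compact containment from $H_3$ and Theorem~\ref{thm:local-exit} averaged via $H_2$, then exponential tightness, then finite-dimensional bounds after localizing $\Lambda_W$ in finitely many pieces of $\mathscr L$. Finally, a compactness/common-refinement argument exchanges $\sup_\pi$ and $\min_\Lambda$; the paper does this by applying Lemma~\ref{lem:partition-energy} to the compact set $\{\Lambda\circ f:\Lambda\in\mathscr L,\ f\in F\}$. There are two small caveats:
\begin{itemize}
\item The dyadic modulus bound, with $2^m$ blocks uniformly in $m\ge m_0$, should use Theorem~\ref{thm:local-exit} directly rather than Proposition~\ref{prop:product}, which is for a fixed number of factors.
\item Boundedness of the level sets of $I$ needs $H_3$, not only $H_4$.
\end{itemize}

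The genuine gap is in the lower bound, exactly where the relaxed rate function matters: you never say on which path the Brownian tube is centred. Introducing $\psi=\Lambda\circ\varphi$ does not settle this, because $\psi$ lives in the intrinsic coordinate, not in the $S_W$-scale where $B$ lives. The only reading consistent with ``the tube for $B$ maps into the $\delta$-tube around $\varphi$'' is the scale--time image of $\varphi$ itself: $S_W\circ\varphi$ run on the clock $\int_0^t\sigma(S_W(\varphi(s)))^2\,\dd s$. A change of variables shows that the Schilder cost of that path is exactly $I_W(\varphi)=\mathcal I_{\mathrm{CM}}(\Lambda_W\circ\varphi)$, not $I(\varphi)$. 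On $A_\kappa$ you only know $\Lambda_W\approx\Lambda$ in sup norm, which gives no control of $q_W=\Lambda_W'$. The minimizing $\Lambda$ may also be a limit point of $\mathscr L$ charged with probability zero.

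For a concrete example, take $a\equiv0$, $q_W(x)=1+\tfrac12\sin(N_Wx)$, and $N_W$ charging every positive integer. Then $\mathscr L=\{\Lambda_n\}_{n\ge1}\cup\{\mathrm{id}\}$. For $\varphi(t)=vt$ one gets $I(\varphi)=v^2/2$, but $I_W(\varphi)\ge(1+c_v)v^2/2$ for a.e.\ $W$, with some $c_v>0$. So your argument would at best give $-\operatorname*{ess\,inf}_W I_W(\varphi)$.

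The missing idea is the recovery path $f_W:=\Lambda_W^{-1}\circ\psi$.
\begin{itemize}
\item By Lemma~\ref{lem:inverse-stability}, $f_W$ is uniformly within $\delta/2$ of $\varphi$ for all $W\in A_\kappa$ once $\kappa$ is small.
\item Since $\Lambda_W\circ f_W=\psi$ exactly, put $F_W=S_W\circ f_W$, $T_W(t)=\int_0^t\sigma(F_W(s))^2\,\dd s$ and $H_W=F_W\circ T_W^{-1}$. Then $\int_0^\infty|H_W'(u)|^2\,\dd u=\int_0^1|\psi'(t)|^2\,\dd t=2I(\varphi)$ for every such $W$.
\item Because $H_W$ varies with $W$, you need a Schilder lower bound that is uniform over the family $\{H_W\}$. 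This comes from a finite covering of the compact Cameron--Martin level set, as in Lemma~\ref{lem:uniform-schilder}, and requires a deterministic tube radius $r$.
\item For a deterministic $r$, a bound on $M([-K,K])$ is not enough. It controls the Lipschitz constants of $S_W^{\pm1}$ and bounds $\sigma$ above and below. But the discrepancy between the random clock $\int_0^u\sigma(B_\varepsilon)^{-2}$ and the deterministic clock $\int_0^u\sigma(H_W)^{-2}$ is governed by the modulus of continuity of $\sigma$.
\item You must therefore also restrict to a positive-probability set of environments on which that modulus is uniformly small at scale $r$, as the paper does with $\omega_{\sigma,W}$.
\end{itemize}
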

	
	\begin{remark}
		If the environment $W$ is fixed, the quenched small-time LDP can be obtained by the same methodology. Therefore, we focus exclusively on the annealed case.
	\end{remark}
	
	Finally, we close this section by presenting some examples that are covered by our framework.
	
	\begin{example}[Brox diffusion]
		Let $a(x,W)=-W(x),\rho(x,W)=\log 2-W(x)$, where $W$ is a two-sided Brownian motion. Then $X^W$ is the Brox diffusion and satisfies the small-time LDP with respect to the annealed probability. Furthermore, the resulting rate function is
		\[
		I(\varphi)=\frac12\int_0^1 |\dot\varphi(t)|^2\dd t.
		\]
	\end{example}
	
	\begin{example}[Non-symmetric case]
		For each fixed environment $W$, let the one-dimensional diffusion $X^W$ be associated with the non-symmetric generator
		\begin{equation*}
			{\mathcal A}_W f(x)=e^{-\rho(x,W)} \left(  (e^{a(x,W)}f'(x)  )'+b(x,W)f'(x) \right),
		\end{equation*} 
		where the coefficients $\{\rho(x,\cdot) : x \in  \R\}$, $\{a(x,\cdot) : x \in  \R\}$ and $\{b(x,\cdot) : x \in  \R\}$ are random. Set
		\[
			G(x,W):=\int_0^x b(z,W)e^{-a(z,W)}\,\dd z.
		\]
		Then
		\[
			{\mathcal A}_W f(x)
			=e^{-\rho(x,W)-G(x,W)}
			\bigl(e^{a(x,W)+G(x,W)}f'(x)\bigr)'.
		\]
		If the transformed coefficients $a+G$ and $\rho+G$ satisfy
		$H_1$--$H_3$, then the theorem applies. Since
		$(\rho+G)-(a+G)=\rho-a$, hypothesis $H_4$ and the rate function
		are unchanged.
	\end{example}
	
	\begin{example}[Uniform distribution with respect to the environment]
		Assume that $H_1$--$H_3$ hold and that $a(x,W)-\rho(x,W)$ is independent of $x$ and has a uniform distribution on the interval $[\alpha,b]$. Then $H_4$ is satisfied. The annealed rate function is
		\[
			I(\varphi)=\frac{e^{-b}}{4}\int_0^1|\dot\varphi(t)|^2\,\dd t.
		\]
	\end{example}
	
	\begin{lemma}[Stability of increasing inverses]\label{lem:inverse-stability}
	Let $a<b$ and let $\Lambda_n,\Lambda$ be continuous strictly increasing
	functions on $[a,b]$ such that $\Lambda_n\to\Lambda$ uniformly. Suppose
	that $h_n,h\in C([0,1])$, $h_n\to h$ uniformly, and
	$h_n([0,1])\subset\Lambda_n([a,b])$. Then
	$h([0,1])\subset\Lambda([a,b])$ and
	\[
		\Lambda_n^{-1}\circ h_n
		\longrightarrow\Lambda^{-1}\circ h
		\quad\text{uniformly}.
	\]
\end{lemma}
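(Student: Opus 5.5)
The plan has two parts: first show $h([0,1])\subset\Lambda([a,b])$ by a closedness argument, then prove uniform convergence of $g_n:=\Lambda_n^{-1}\circ h_n$ to $g:=\Lambda^{-1}\circ h$ by a subsequence/compactness argument. Using compactness avoids needing a uniform modulus of continuity for the inverses $\Lambda_n^{-1}$, which may degenerate.

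\emph{Range inclusion.} Since $\Lambda_n,\Lambda$ are continuous and strictly increasing, $\Lambda_n([a,b])=[\Lambda_n(a),\Lambda_n(b)]$ and $\Lambda([a,b])=[\Lambda(a),\Lambda(b)]$. Fix $t\in[0,1]$. Then $\Lambda_n(a)\le h_n(t)\le\Lambda_n(b)$ for every $n$. Letting $n\to\infty$ and using uniform convergence (pointwise convergence at $a$, $b$ and $t$ suffices) gives $\Lambda(a)\le h(t)\le\Lambda(b)$. Hence $g=\Lambda^{-1}\circ h$ is well defined, and it is continuous because $\Lambda^{-1}$ is continuous on the compact interval $[\Lambda(a),\Lambda(b)]$.

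\emph{Pointwise convergence along sequences.} I will show that whenever $t_n\to t$ in $[0,1]$, we have $g_n(t_n)\to g(t)$. The values $y_n:=g_n(t_n)$ lie in the compact set $[a,b]$, so it suffices to show every convergent subsequence $y_{n_k}\to y^*$ has limit $y^*=g(t)$. On one hand, $\Lambda_{n_k}(y_{n_k})=h_{n_k}(t_{n_k})\to h(t)$, using uniform convergence of $h_n$ and continuity of $h$. On the other hand,
\[
|\Lambda_{n_k}(y_{n_k})-\Lambda(y^*)|\le\|\Lambda_{n_k}-\Lambda\|_\infty+|\Lambda(y_{n_k})-\Lambda(y^*)|\to0
\]
by uniform convergence and continuity of $\Lambda$. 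Thus $\Lambda(y^*)=h(t)$, and strict monotonicity (injectivity) of $\Lambda$ gives $y^*=\Lambda^{-1}(h(t))=g(t)$.

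\emph{Upgrade to uniform convergence.} Suppose $\sup_t|g_n(t)-g(t)|\not\to0$. Then there exist $\delta>0$, a subsequence $n_k$ and points $t_k\in[0,1]$ with $|g_{n_k}(t_k)-g(t_k)|\ge\delta$. Passing to a further subsequence, $t_k\to t\in[0,1]$. The previous step, applied to this subsequence (it only used convergence along the given indices), gives $g_{n_k}(t_k)\to g(t)$. Continuity of $g$ gives $g(t_k)\to g(t)$. Together these contradict $|g_{n_k}(t_k)-g(t_k)|\ge\delta$.

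The only delicate point is that the $\Lambda_n^{-1}$ need not be equicontinuous: the limit $\Lambda$ is merely strictly increasing, and its inverse's modulus is not inherited by the $\Lambda_n^{-1}$ uniformly in $n$. The compactness argument in the second step handles this, since it uses only the injectivity of the limit $\Lambda$, never a uniform modulus for the $\Lambda_n^{-1}$.
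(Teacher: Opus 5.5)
Your proof is correct and follows the paper's argument: assume uniform convergence fails, extract subsequences with $t_n\to t$ and $\Lambda_n^{-1}(h_n(t_n))\to x$, then use uniform convergence and injectivity of $\Lambda$ to force $x=\Lambda^{-1}(h(t))$. Your explicit proof of $h([0,1])\subset\Lambda([a,b])$ from the endpoint inequalities $\Lambda_n(a)\le h_n(t)\le\Lambda_n(b)$ is a useful addition, since the paper relies on this inclusion (through continuity of $\Lambda^{-1}\circ h$) without proving it.
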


\begin{proof}
  If uniform convergence of the compositions failed, there
	would be $t_n\in[0,1]$ and $\delta>0$ such that, along a subsequence,
	\[
		\left|\Lambda_n^{-1}(h_n(t_n))
		-\Lambda^{-1}(h(t_n))\right|\ge\delta.
	\]
	After taking further subsequences, $t_n\to t$ and
	$\Lambda_n^{-1}(h_n(t_n))\to x\in[a,b]$. Uniform convergence gives
	$\Lambda(x)=h(t)$, whereas continuity of $\Lambda^{-1}\circ h$ gives
	$\Lambda^{-1}(h(t_n))\to\Lambda^{-1}(h(t))$. Strict monotonicity
	forces $x=\Lambda^{-1}(h(t))$, a contradiction. T
\end{proof}

\begin{proposition}[Goodness of the annealed rate function]\label{prop:good-rate}
	Assume $H_3$ and $H_4$. Then the function $I$ in
	\eqref{eq:rate-function} is a good rate function on
	$C_0([0,1];\R)$ equipped with the uniform topology.
\end{proposition}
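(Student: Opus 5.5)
We need to show that $I$ takes values in $[0,\infty]$ (this is immediate), that it is well defined (the minimum is attained), and that every sublevel set $\{I\le \alpha\}$ is compact in $C_0([0,1];\R)$. Since $C_0$ is metric, compactness will follow from sequential compactness: every sequence $f_n$ with $I(f_n)\le\alpha$ has a subsequence converging uniformly to some $f$ with $I(f)\le\alpha$. This simultaneously gives lower semicontinuity, since closedness of sublevel sets is implied by sequential compactness inside them plus the limit bound.

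\textbf{Step 1 (setup).} Fix $f_n$ with $I(f_n)\le\alpha$. By the definition \eqref{eq:rate-function}, choose $\Lambda_n\in\mathscr L$ with $\mathcal I_{\mathrm{CM}}(\Lambda_n\circ f_n)=I(f_n)\le\alpha$. Attainment holds by compactness of $\mathscr L$ together with the lower semicontinuity argument below; alternatively we take near-minimizers. Set $h_n:=\Lambda_n\circ f_n\in\mathbb H_0^1$; here $h_n(0)=0$ because $\Lambda_n(0)=0$, which passes to the support since $\Lambda_W(0)=0$. The Cameron--Martin bound gives $|h_n(t)-h_n(s)|\le\sqrt{2\alpha|t-s|}$, so $\|h_n\|_\infty\le\sqrt{2\alpha}$. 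By Arzel\`a--Ascoli and compactness of $\mathscr L$ (H4), we pass to a subsequence with $h_n\to h$ uniformly and $\Lambda_n\to\Lambda\in\mathscr L$ locally uniformly. Lower semicontinuity of $\mathcal I_{\mathrm{CM}}$ then gives $\mathcal I_{\mathrm{CM}}(h)\le\alpha$.

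\textbf{Step 2 (uniform confinement of $f_n$) — the main obstacle.} We need a deterministic $R$ with $f_n([0,1])\subset[-R,R]$ for all $n$; that is, no $\Lambda\in\mathscr L$ may stay small on a long interval. We will use H3. Set $\Lambda_W(R)=\int_0^R q_W$, with $q_W=e^{(\rho-a)/2}/\sqrt2$. H3 says $\operatorname{ess\,inf}_W\Lambda_W(R)\to\infty$ as $R\to\infty$, and similarly $\operatorname{ess\,inf}_W(-\Lambda_W(-R))\to\infty$. The set $\{\Lambda:\Lambda(R)\ge m_R\}$, with $m_R:=\operatorname{ess\,inf}_W\Lambda_W(R)$, is closed in $C_\loc$ and has full law, so it contains $\mathscr L$. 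Hence every $\Lambda\in\mathscr L$ satisfies $\Lambda(R)\ge m_R$ and $\Lambda(-R)\le -m'_R$. Choose $R$ with $m_R,m'_R>\sqrt{2\alpha}$. Each $\Lambda_n$ is increasing, and $|\Lambda_n(f_n(t))|\le\sqrt{2\alpha}$, so this forces $f_n(t)\in(-R,R)$. Consequently $h_n([0,1])\subset\Lambda_n([-R,R])$.

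\textbf{Step 3 (conclusion).} On $[-R,R]$ we have $\Lambda_n\to\Lambda$ uniformly, all these functions being continuous and strictly increasing by H4. Lemma~\ref{lem:inverse-stability} therefore yields $h([0,1])\subset\Lambda([-R,R])$ and $f_n=\Lambda_n^{-1}\circ h_n\to f:=\Lambda^{-1}\circ h$ uniformly. Moreover $f(0)=0$ and $f$ is continuous, so $f\in C_0$. Finally, $\Lambda\circ f=h$ gives $I(f)\le\mathcal I_{\mathrm{CM}}(h)\le\alpha$. The same argument, applied with $f_n\equiv f$ and a minimizing sequence $\Lambda_n$, justifies that the minimum in \eqref{eq:rate-function} is attained; it also shows lower semicontinuity, by applying the argument to $f_n\to f$ along a subsequence realizing $\liminf I(f_n)$, where uniqueness of limits identifies the limit of $\Lambda_n^{-1}\circ h_n$ with $f$.
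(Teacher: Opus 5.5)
Your proof is correct and follows the paper's argument: $H_3$ combined with the support characterization of $\mathscr L$ gives a deterministic $R$ that confines every path in the sublevel set. After that, compactness of $\mathscr L$, Arzel\`a--Ascoli for $h_n=\Lambda_n\circ f_n$, and Lemma~\ref{lem:inverse-stability} yield the uniformly convergent subsequence. The only organizational difference is that you get lower semicontinuity from compactness (hence closedness) of the sublevel sets, whereas the paper proves it separately via $\Lambda_n\circ f_n\to\Lambda\circ f$.
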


\begin{proof}
	We first prove lower semicontinuity. Let $f_n\to f$ uniformly and assume
	that $\liminf_n I(f_n)<\infty$. Choose minimizers
	$\Lambda_n\in\mathscr L$ in \eqref{eq:rate-function}. Compactness of
	$\mathscr L$ gives, along a subsequence,
	$\Lambda_n\to\Lambda$ locally uniformly. Since the paths $f_n$ have a
	common bounded range,
	\[
		\Lambda_n\circ f_n\longrightarrow\Lambda\circ f
		\quad\text{uniformly}.
	\]
	The lower semicontinuity of the Cameron--Martin energy yields
	\[
		I(f)\le\mathcal I_{\mathrm{CM}}(\Lambda\circ f)
		\le\liminf_{n\to\infty}I(f_n).
	\]

	We next prove compactness of the level sets. For $R>0$, put
	\[
		m_R^+:=\min_{\Lambda\in\mathscr L}\Lambda(R),
		\qquad
		m_R^-:=\min_{\Lambda\in\mathscr L}[-\Lambda(-R)].
	\]
	Evaluation at a fixed point is continuous on $C_{\loc}(\R)$, and
	$\mathscr L$ is the support of the law of $\Lambda_W$. Hence
	\[
		m_R^+=\operatorname*{ess\,inf}_W\Lambda_W(R),
		\qquad
		m_R^-=\operatorname*{ess\,inf}_W[-\Lambda_W(-R)].
	\]
  Hypothesis $H_3$ implies
	\[
		m_R^+\longrightarrow\infty,
		\qquad
		m_R^-\longrightarrow\infty
		\qquad(R\to\infty).
	\]
	In particular, every $\Lambda\in\mathscr L$ is a homeomorphism of
	$\R$ onto $\R$.

	Fix $L<\infty$ and choose $R$ such that
	$\min\{m_R^+,m_R^-\}>\sqrt{2L}$. If $I(f)\le L$ and $f$ reaches $R$ or
	$-R$, a minimizing $\Lambda$ gives a Cameron--Martin path
	$h=\Lambda\circ f$ with
	$\max_t|h(t)|>\sqrt{2L}$. This contradicts
	$\mathcal I_{\mathrm{CM}}(h)\le L$. Thus the level set $\{I\le L\}$ is
	uniformly bounded.

	Let $(f_n)\subset\{I\le L\}$ and choose minimizers $\Lambda_n$. Put
	$h_n:=\Lambda_n\circ f_n$. Along subsequences,
	$\Lambda_n\to\Lambda$ locally uniformly and $h_n\to h$ uniformly,
	because the Cameron--Martin level set is compact. By the uniform range
	bound and Lemma~\ref{lem:inverse-stability},
	\[
		f_n=\Lambda_n^{-1}\circ h_n
		\longrightarrow\Lambda^{-1}\circ h
		\quad\text{uniformly}.
	\]
	Lower semicontinuity shows that the limit remains in $\{I\le L\}$.
	Hence every level set is compact.
\end{proof}

\section{Annealed lower bound}\label{sec:annealed-lower}
	In this section, we first show that the diffusion process $X^W$ associated with the generator ${\mathcal L}_W$ has the Itô–McKean representation. Then, using this representation, we derive the lower bound in \eqref{eq:ldp-lower} via Schilder's lower bound.
	
	\begin{lemma}\label{l3.1}
		Let $X^W$ be the diffusion given by the scale-time construction \eqref{eq:brox-time-change-section2}, and let $(P_t^W)_{t\geq0}$ be its transition semigroup. Then $(P_t^W)_{t\geq0}$ is the $L^2(\mu_W)$-semigroup associated with $(\mathcal E_W,\mathcal F_W)$.
	\end{lemma}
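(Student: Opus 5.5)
We reduce the statement to the standard one-dimensional theory of time changes of a Brownian motion and identify Dirichlet forms. Fix $W$ outside a null set so that $a,\rho$ are continuous and \eqref{2.2} holds. Then $S_W$ is a $C^1$ increasing homeomorphism of $\R$ onto $\R$ with $S_W'=e^{-a}>0$.

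\emph{Step 1: reduction to the scale coordinate.} Let $Y(t):=B(\phi^{-1}(t))$, so that $X^W=S_W^{-1}(Y)$. Define the measure
\[
m(dy):=\sigma(y)^{-2}\,dy=\tfrac12 e^{(\rho+a)(S_W^{-1}(y))}\,dy .
\]
It is a positive Radon measure with full support, since $\sigma$ is continuous and positive. The additive functional $\phi(t)=\int_0^t\sigma(B_s)^{-2}\,ds=\int_\R L_t^y\,m(dy)$, where $L$ denotes local time, is a PCAF of $B$ with Revuz measure $m$. By \eqref{2.2} and Remark (2), $\phi$ is a homeomorphism of $\R_+$, so no killing or explosion occurs. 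By the time-change theorem for Dirichlet forms (Fukushima--Oshima--Takeda, Thm.~6.2.1, or Chen--Fukushima), $Y$ is an $m$-symmetric diffusion whose Dirichlet form on $L^2(m)$ is the trace of the Brownian form $\frac12\int (u')^2\,dy$ on $\supp m=\R$. Hence it equals
\[
\calE^Y(u,u)=\tfrac12\int_\R u'(y)^2\,dy,\qquad \calF^Y=\{u\in L^2(m):u \text{ loc.\ abs.\ cont.},\ u'\in L^2(dy)\},
\]
using that $B$ is recurrent so that the extended space applies.

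\emph{Step 2: transfer by the homeomorphism $S_W$.} For $f=u\circ S_W$ we have $f'(x)=u'(S_W(x))e^{-a(x)}$. Substituting $y=S_W(x)$, $dy=e^{-a}dx$, gives
\[
\int u(y)^2\,m(dy)=\int f(x)^2\,\tfrac12 e^{\rho(x)}\,dx,\qquad
\tfrac12\int u'(y)^2\,dy=\tfrac12\int f'(x)^2e^{a(x)}\,dx .
\]
Thus the image form is $\frac12\int (f')^2e^{a}\,dx$ on $L^2(\frac12\mu_W)$. This matches $(\calE_W,\calF_W)$ up to the normalisation constant: the generator of the image form is $e^{-\rho}(e^a f')'$, because the factors $\frac12$ in the measure and in the energy cancel. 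Equivalently, the form $(\frac12\calE_W)$ on $L^2(\frac12\mu_W)$ has the same generator, and hence the same semigroup, as $(\calE_W,\calF_W)$ on $L^2(\mu_W)$. We will check carefully that this is the reason $\sqrt2$ appears in $\sigma$. The domains also correspond: local absolute continuity is preserved under $C^1$ diffeomorphic change of variables, and so are the $L^2$ conditions. Consequently, the transition semigroup of $X^W=S_W^{-1}(Y)$ satisfies $P_t^Wf=(P_t^Y(f\circ S_W^{-1}))\circ S_W$, and it coincides with the $L^2(\mu_W)$-semigroup of $\calE_W$.

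\emph{Main obstacle.} The delicate point is Step 1, namely justifying that the trace form on the full line is exactly the maximal domain written above. This requires that $(\calE_W,\calF_W)$ be regular and that no boundary condition at $\pm\infty$ be lost, i.e.\ Silverstein uniqueness in the conservative case. We would handle it with \eqref{2.2}: the scale function is unbounded at both ends, so both infinities are natural or entrance non-exit boundaries for $Y$ in natural scale. Then the maximal form is the unique Dirichlet extension, and $C_c^1$ is a core, which gives regularity. As an alternative route that avoids the general time-change theorem, we could verify directly that for $f\in C_c^2$-type functions in the domain, It\^o's formula applied to $S_W^{-1}(B_{\phi^{-1}(t)})$ gives the martingale problem for $\mathcal L_W$. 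We would then conclude by uniqueness of the conservative diffusion with given scale and speed measure.
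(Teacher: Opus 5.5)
Your proposal is correct and follows essentially the same route as the paper: time-change the Brownian motion by the additive functional with Revuz measure $\sigma^{-2}\dd q$, identify the time-changed process with the trace form $\frac12\int u'^2\dd q$ on $L^2(\sigma^{-2}\dd q)$ via the Fukushima--Oshima--Takeda time-change theorem, pull this back through $S_W$ to $(\frac12\calE_W,\calF_W)$ on $L^2(\frac12\mu_W)$, and note that the common factor $\frac12$ leaves the generator and semigroup unchanged. The ``main obstacle'' you flag is already covered by that theorem: since $\sigma^{-2}\dd q$ has full support, the trace domain is $\calF_e\cap L^2(\sigma^{-2}\dd q)$, where $\calF_e$ is the extended space of the recurrent Brownian motion, and the trace form is regular, so no separate Silverstein-uniqueness argument is needed.
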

	
	\begin{proof}
		Fix the environment $W$ and a starting point $x$. Let $B$ be a Brownian motion started from $S_W(x)$ and define
		\[
			A(u):=\int_0^u\sigma(B_s)^{-2}\,\dd s,
			\qquad
			Y_t:=B_{A^{-1}(t)}.
		\]
		By the time-change theorem for regular Dirichlet forms (see \cite[Chapter~6]{FukushimaOshimaTakeda2011}), $Y$ is associated on $L^2(\R,\nu)$ with
		\begin{equation*}
			\widehat{\mathcal E}(u,v)
			:=\frac12\int_{\R}u'(q)v'(q)\,\dd q,
			\qquad
			\nu(\dd q):=\sigma(q)^{-2}\,\dd q,
		\end{equation*}
		and
		\begin{equation*}
			\widehat{\mathcal F}
			=\left\{u\in L^2(\R,\nu):
			u\text{ is locally absolutely continuous and }u'\in L^2(\R,\dd q)\right\}.
		\end{equation*}

		For $f$ on $\R$, put $\widehat f(q):=f(S_W^{-1}(q))$. With $q=S_W(z)$,
		\[
			\widehat f'(S_W(z))=e^{a(z,W)}f'(z),
			\qquad
			\dd q=e^{-a(z,W)}\,\dd z.
		\]
		Consequently,
		\begin{align*}
			\widehat{\mathcal E}(\widehat f,\widehat g)
			&=\frac12\int_{\R}e^{a(z,W)}f'(z)g'(z)\,\dd z
			=\frac12\mathcal E_W(f,g).
		\end{align*}
		and
		\[
			\nu(\dd q)
			=\frac12e^{\rho(z,W)}\,\dd z
			=\frac12\mu_W(\dd z).
		\]
		The pullback of $(\widehat{\mathcal E},\widehat{\mathcal F})$ is therefore
		$(\frac12\mathcal E_W,\mathcal F_W)$ on $L^2(\frac12\mu_W)$. Multiplying both the form and the reference measure by the same positive constant leaves the associated generator and semigroup unchanged. Hence
		\[
			X_t^W=S_W^{-1}(Y_t)
		\]
		is the diffusion associated with $(\mathcal E_W,\mathcal F_W)$ on $L^2(\mu_W)$.
	\end{proof}

	Let
	\[
	B_\eps(s):=B(\eps s),\qquad s\ge0,
	\]
	where $B$ is a standard Brownian motion. We shall show the following uniform version of the classical Schilder lower bound.
	
	\begin{lemma}[Uniform Cameron–Martin tube lower bound]\label{lem:uniform-schilder}
		Fix positive constants $T,a,r$. Let
		\[
		\mathcal H_T(a)=\left\{h\in C_0([0,T]):
		\frac12\int_0^T |h'(s)|^2\dd s\le a\right\}.
		\]
		Then
		\begin{equation}\label{eq:uniform-schilder}
			\liminf_{\eps\downarrow0}\eps\log
			\inf_{h\in\mathcal H_T(a)}
			\PB\left(\sup_{0\le s\le T}|B(\eps s)-h(s)|<r\right)
			\ge -a.
		\end{equation}
	\end{lemma}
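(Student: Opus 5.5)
The plan is to use the Cameron--Martin (Girsanov) shift, with bounds that are uniform over the whole class $\mathcal H_T(a)$. Fix $h\in\mathcal H_T(a)$. The process $s\mapsto B(\eps s)$ has the same law as $\sqrt\eps\,\beta(s)$, where $\beta$ is a standard Brownian motion on $[0,T]$. Write $U_r:=\{\omega\in C_0([0,T]):\|\omega\|_\infty<r\}$. By the Cameron--Martin formula with shift $h/\sqrt\eps$,
\[
\PB\bigl(\sqrt\eps\beta-h\in U_r\bigr)
=\E\Bigl[\exp\Bigl(-\tfrac1{\sqrt\eps}\int_0^T h'(s)\,\dd\beta(s)-\tfrac1{2\eps}\int_0^T|h'(s)|^2\dd s\Bigr);\ \sqrt\eps\beta\in U_r\Bigr].
\]
The deterministic factor is at least $e^{-a/\eps}$. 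It therefore remains to control the stochastic integral on the event $\{\sqrt\eps\beta\in U_r\}$, uniformly in $h$.

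I will use the symmetry trick. The event $U_r$ is symmetric under $\beta\mapsto-\beta$, and $\int h'\,\dd\beta$ is odd under this map. Hence
\[
\E\bigl[e^{-\eps^{-1/2}\int h'\dd\beta};\,\sqrt\eps\beta\in U_r\bigr]
=\E\bigl[\cosh\bigl(\eps^{-1/2}\textstyle\int h'\dd\beta\bigr);\,\sqrt\eps\beta\in U_r\bigr]
\ge\PB\bigl(\sqrt\eps\beta\in U_r\bigr),
\]
since $\cosh\ge1$. This yields
\[
\inf_{h\in\mathcal H_T(a)}\PB\Bigl(\sup_{s\le T}|B(\eps s)-h(s)|<r\Bigr)\ge e^{-a/\eps}\,\PB\Bigl(\sup_{s\le T}|\beta(s)|<r/\sqrt\eps\Bigr).
\]

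The last probability tends to $1$ as $\eps\downarrow0$; one can check this by Doob's or the reflection inequality, which gives $\PB(\sup|\beta|\ge r/\sqrt\eps)\le 4e^{-r^2/(2\eps T)}$. Taking $\eps\log$ and the $\liminf$ then gives \eqref{eq:uniform-schilder}.

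The only delicate point is uniformity in $h$, since the standard Schilder lower bound is stated pointwise. The symmetrization above removes all dependence on $h$ except through the energy bound, so no compactness argument over $\mathcal H_T(a)$ is needed. If one preferred to avoid symmetry, a fallback is Jensen's inequality applied to the conditional law given $U_r$ together with Cauchy--Schwarz. However, the symmetry argument is cleaner and gives the exact constant $a$.
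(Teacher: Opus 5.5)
Your proof is correct, and it takes a genuinely different route from the paper's.

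The paper gets uniformity over $h$ by a soft compactness argument. It uses that $\mathcal H_T(a)$ is compact in $C_0([0,T])$ and picks a finite $r/2$-net $h_1,\dots,h_m\in\mathcal H_T(a)$. It then notes that $\{\|B_\eps-h_j\|_\infty<r/2\}\subset\{\|B_\eps-h\|_\infty<r\}$ whenever $\|h-h_j\|_\infty<r/2$. Finally it applies the pointwise Schilder lower bound to each of the finitely many balls and takes the minimum.

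You instead use the Cameron--Martin density together with the symmetry of the ball under $\beta\mapsto-\beta$, which is an Anderson-type inequality. This gives the explicit, non-asymptotic estimate
\[
\inf_{h\in\mathcal H_T(a)}\PB\Bigl(\sup_{0\le s\le T}|B(\eps s)-h(s)|<r\Bigr)\ge e^{-a/\eps}\,\PB\Bigl(\sup_{0\le s\le T}|B(\eps s)|<r\Bigr)
\]
for every $\eps>0$. It holds with the full radius $r$ and uses no compactness of $\mathcal H_T(a)$. The symmetry step is sound, because $\bigl(\beta,\int_0^T h'\,\dd\beta\bigr)$ and $\bigl(-\beta,-\int_0^T h'\,\dd\beta\bigr)$ have the same joint law. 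Hence $\E[e^{-X};A]=\E[\cosh X;A]\ge\PB(A)$.

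What each route buys:
\begin{itemize}
\item The paper's argument is shorter once Schilder's theorem is taken as a black box. It is purely topological, so it transfers verbatim to any family satisfying an LDP with a good rate function.
\item Your argument relies on the Gaussian structure of the noise (quasi-invariance under Cameron--Martin shifts and symmetry). In exchange it is self-contained and quantitative, and it actually reproves the Schilder lower bound rather than invoking it.
\end{itemize}
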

	
	\begin{proof}
		Since $\mathcal H_T(a)$ is compact in $C_0([0,T])$, one can choose $h_1,\ldots,h_m\in\mathcal H_T(a)$ such that
		\[
		\mathcal H_T(a)\subset \bigcup_{j=1}^m
		\left\{h:\|h-h_j\|_\infty<\frac r2\right\}.
		\]
		Consequently,
		\[
		\inf_{h\in\mathcal H_T(a)}\PB(\|B_\eps-h\|_\infty<r)
		\ge
		\min_{1\le j\le m}\PB\left(\|B_\eps-h_j\|_\infty<\frac r2\right).
		\]
		By Schilder's theorem on $C_0([0,T])$,
		\[
		\liminf_{\eps\downarrow0}\eps\log
		\PB\left(\|B_\eps-h_j\|_\infty<\frac r2\right)
		\ge -\frac12\int_0^T |\dot h_j(s)|^2\dd s
		\ge -a.
		\]
		Taking a finite minimum preserves the same lower bound, which proves \eqref{eq:uniform-schilder}.
	\end{proof}
	
	Next result is  the lower bound of the small-time LDP. Recall that the rate function $I$ is defined in \eqref{eq:rate-function}.
	
	\begin{proposition}\label{prop:annealed-tube-lower}
	Assume hypotheses $H_1$ and $H_4$. Then for every open set
	$G\subset C_0([0,1])$,
	\[
		\liminf_{\varepsilon\downarrow0}\varepsilon\log
		P_0^{\ann}(X_\varepsilon\in G)
		\ge-\inf_{\varphi\in G}I(\varphi).
	\]
\end{proposition}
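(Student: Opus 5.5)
The plan is to reduce the lower bound to a uniform tube estimate around a single path, transfer it to Brownian motion through the It\^o--McKean representation \eqref{eq:brox-time-change-section2}, and average over a set of environments of positive probability. As usual it suffices to show that for every $\varphi\in G$ with $I(\varphi)<\infty$ and every $\delta>0$ with $\{\|f-\varphi\|_\infty<\delta\}\subset G$,
\[
\liminf_{\eps\downarrow0}\eps\log P_0^{\ann}\bigl(\|X_\eps-\varphi\|_\infty<\delta\bigr)\ge -I(\varphi).
\]
\emph{Setup.} Pick a minimizer $\Lambda\in\mathscr L$ in \eqref{eq:rate-function} and put $h:=\Lambda\circ\varphi$, so that $\mathcal I_{\mathrm{CM}}(h)=I(\varphi)$. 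For an environment $W$ whose $\Lambda_W$ is close to $\Lambda$, the natural quenched target is
\[
f_W:=\Lambda_W^{-1}\circ h .
\]
It satisfies $I_W(f_W)=\mathcal I_{\mathrm{CM}}(h)=I(\varphi)$ exactly.

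\emph{Step 1: the environment set.} Fix a compact interval $[-K,K]$ containing a neighbourhood of $\varphi([0,1])$. By Lemma~\ref{lem:inverse-stability}, there is $\eta>0$ such that $\sup_{[-K,K]}|\Lambda_W-\Lambda|<\eta$ implies that $f_W$ is well defined and $\|f_W-\varphi\|_\infty<\delta/2$. (The range condition follows from strict monotonicity of $\Lambda$ and the room left in $[-K,K]$.) Since $\Lambda\in\supp\operatorname{Law}(\Lambda_W)$, this event has positive probability. By $H_1$ the quantity $M([-K,K])$ is a.s.\ finite, so for $m$ large the set
\[
\Omega_0:=\Bigl\{W:\ \sup_{[-K,K]}|\Lambda_W-\Lambda|<\eta,\ M([-K,K])\le m\Bigr\}
\]
still has $\Pbb(\Omega_0)>0$. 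Then $P_0^{\ann}(\cdots)\ge \Pbb(\Omega_0)\inf_{W\in\Omega_0}P_0^W(\|X_\eps-f_W\|_\infty<\delta/2)$. The constant $\Pbb(\Omega_0)$ disappears after multiplying the logarithm by $\eps$, so it remains to bound the quenched probability uniformly over $W\in\Omega_0$.

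\emph{Step 2: the Brownian target.} Put $u:=S_W\circ f_W$ and define a clock $s(t):=\int_0^t\sigma(u(r))^2\,\dd r$. Let $k:=u\circ s^{-1}$ on $[0,T_W]$ with $T_W:=s(1)$. Then $\int_0^{s(t)}\sigma(k)^{-2}\,\dd s=t$, and the change of variables $\dd s=\sigma^2\,\dd t$ combined with $u'=e^{-a}f_W'$ and $\sigma^{-2}=\tfrac12e^{\rho+a}$ gives
\[
\tfrac12\int_0^{T_W}|k'|^2\,\dd s=\tfrac12\int_0^1|u'|^2\sigma(u)^{-2}\,\dd t=\tfrac12\int_0^1 q_W(f_W)^2|f_W'|^2\,\dd t=I(\varphi).
\]
On $\Omega_0$ the functions $\sigma^{\pm2}$ and $e^{\pm a}$ are bounded on the relevant compact set by constants depending only on $m$. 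Hence $T_W\le T$ for a deterministic $T$, and we extend $k$ constantly to $[0,T]$. All $k$ then lie in $\mathcal H_T(I(\varphi))$.

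\emph{Step 3: continuity of the It\^o--McKean map.} Since $B$ starts at $S_W(0)=0$, we have $X_\eps(t)=S_W^{-1}\bigl(B(\phi^{-1}(\eps t))\bigr)$ and, scaling, $\phi(\eps s)/\eps=\int_0^s\sigma(B(\eps r))^{-2}\,\dd r$. The claim is that there is $r>0$, independent of $W\in\Omega_0$, such that $\sup_{s\le T}|B(\eps s)-k(s)|<r$ forces $\|X_\eps-f_W\|_\infty<\delta/2$. This is the main obstacle, because it needs uniform moduli in $W$. The plan is as follows.
\begin{itemize}[label=$\circ$]
\item Use the bound $m$ to get uniform upper and lower bounds on $\sigma^{-2}$ on a compact set.
\item Because $\Lambda_W$ lies in the compact family near $\Lambda$, and $\mathscr L$ together with the bound on $M$ controls the modulus of continuity of $\sigma\circ S_W$ (it is $\sqrt2 q_W^{-1}$ up to the bounded factor $e^{-a}$), get uniform equicontinuity of $\sigma^{-2}$ along the relevant range.
\item Deduce that the clocks $\phi(\eps\cdot)/\eps$ and $s^{-1}$ are uniformly close, and hence that their inverses are uniformly close, since both are bi-Lipschitz with uniform constants.
\item Use the uniform equicontinuity of $k$, coming from its energy bound, and of $S_W^{-1}$, whose derivative is $e^{a}\le e^m$, to conclude.
\end{itemize}
If equicontinuity of $\sigma$ turns out to be awkward, I would instead shrink $\Omega_0$ by also requiring $\rho$ and $a$ to be uniformly close on $[-K,K]$ to a fixed continuous pair. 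This is less elegant and needs positivity of that event, so the first route is preferable.

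\emph{Step 4: conclusion.} With $r$ from Step 3, Lemma~\ref{lem:uniform-schilder} gives
\[
\liminf_{\eps\downarrow0}\eps\log\inf_{W\in\Omega_0}P_0^W\bigl(\|X_\eps-f_W\|_\infty<\delta/2\bigr)\ge -I(\varphi).
\]
Combined with $\|f_W-\varphi\|_\infty<\delta/2$ and Step 1, this yields the claimed bound, and taking the infimum over $\varphi\in G$ finishes the proof.
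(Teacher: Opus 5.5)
Your architecture matches the paper's: a minimizer $\Lambda\in\mathscr L$, the recovery path $f_W=\Lambda_W^{-1}\circ h$ on an environment set of positive probability, the time-changed Brownian target $k$ (the paper's $H_W$) with energy exactly $I(\varphi)$, and Lemma~\ref{lem:uniform-schilder}. The gap is in Step~3, in the route you prefer.

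Compactness of $\mathscr L$ together with $M([-K,K])\le m$ does \emph{not} give a modulus of continuity of $\sigma$ that is uniform over $W\in\Omega_0$. Equivalently, it does not control $\sigma\circ S_W=e^{-a}q_W^{-1}$ uniformly. There are three reasons:
\begin{itemize}
\item compactness in $C_{\loc}(\R)$ yields equicontinuity of the functions $\Lambda_W$, not of their derivatives $q_W$;
\item the bound on $M$ is only a sup-norm bound;
\item the factor $e^{-a}$ is bounded but not equicontinuous.
\end{itemize}
Brox's case already shows this. There $\mathscr L=\{\mathrm{id}\}$ and $q_W\equiv1$, yet $\sigma\circ S_W=e^{W}$. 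For any prescribed $r>0$, $\Omega_0$ contains with positive probability environments for which $W$ oscillates by at least $1$ over an interval of length $r$. Without a uniform modulus, the clock discrepancy $\bigl|\int_0^s(\sigma(B_\eps(u))^{-2}-\sigma(k(u))^{-2})\,\dd u\bigr|$ is not controlled by a deterministic tube radius uniformly in $W\in\Omega_0$. So the implication you need in Step~3 is not obtained.

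The missing idea is that you do not need equicontinuity over all of $\Omega_0$; a smaller set of positive probability suffices. For each fixed $W$, the modulus
\[
\omega_{\sigma,W}(r):=\sup\{|\sigma(u)-\sigma(v)|:|u|,|v|\le K',\ |u-v|\le r\}
\]
tends to $0$ as $r\downarrow0$ and is monotone in $r$. By continuity of $\Pbb$ along a sequence $r_n\downarrow0$, this gives $\Pbb(\Omega_0\cap\{\omega_{\sigma,W}(r)\le\kappa\})\to\Pbb(\Omega_0)>0$. Your clock comparison demands a deterministic tolerance $\kappa$, depending only on $m$, $T$, $\delta$, $I(\varphi)$. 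You can therefore fix a deterministic $r$ for which this intersection has positive probability, and carry out Step~3 on it. This is exactly how the paper defines its set $A$.

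Your fallback, requiring $(\rho,a)$ to be close to a fixed pair, can also be made to work. Choose the reference pair in the topological support of the law of $(\rho,a)|_{[-K,K]}$, inside the open set where $\|\Lambda_W-\Lambda\|<\eta$. Still, the monotone-limit argument makes positivity automatic and is simpler.
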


\begin{proof}
	It is enough to prove that, for every $f\in C_0([0,1])$ with
	$I(f)<\infty$ and every $\eta>0$,
	\begin{equation}\label{eq:tube-lower}
		\liminf_{\varepsilon\downarrow0}\varepsilon\log
		P_0^{\ann}\left(\|X_\varepsilon-f\|_\infty<\eta\right)
		\ge-I(f).
	\end{equation}
	Choose a minimizer $\Lambda\in\mathscr L$ in
	\eqref{eq:rate-function} and put
	\[
		h:=\Lambda\circ f.
	\]
	Then $h\in\mathbb H_0^1([0,1])$ and
	$\mathcal I_{\mathrm{CM}}(h)=I(f)$. Extend $f$ and $h$ constantly to
	$[1,\infty)$.

	Choose $R_1>\|f\|_\infty+1$. The compact set $h([0,1])$ lies in the
	interior of $\Lambda([-R_1,R_1])$. Since $\Lambda$ belongs to the
	support of the law of $\Lambda_W$, for every sufficiently small
	$\delta>0$ the event
	\[
		A_\delta:=\left\{W:
		\|\Lambda_W-\Lambda\|_{C([-R_1,R_1])}<\delta\right\}
	\]
	has positive probability. For $W\in A_\delta$, define the recovery path
	\[
		f_W:=\Lambda_W^{-1}\circ h.
	\]
	After decreasing $\delta$, Lemma~\ref{lem:inverse-stability} gives
	\[
		\sup_{W\in A_\delta}\|f_W-f\|_\infty<\frac\eta2,
		\qquad
		f_W([0,1])\subset[-R_1,R_1].
	\]
	Moreover,
	\[
		\Lambda_W\circ f_W=h,
		\qquad
		I_W(f_W)=\mathcal I_{\mathrm{CM}}(h)=I(f).
	\]

	For $W\in A_\delta$, write
	\[
		F_W(t):=S_W(f_W(t)),
		\qquad
		T_W(t):=\int_0^t\sigma(F_W(s))^2\,\dd s,
		\qquad
		H_W(u):=F_W(T_W^{-1}(u)).
	\]
	The identities
	\[
		F_W'(t)=\sigma(F_W(t))h'(t),
		\qquad
		H_W'(u)=h'(T_W^{-1}(u))\sigma(H_W(u))^{-1}
	\]
	give
	\[
		\int_0^\infty|H_W'(u)|^2\,\dd u
		=\int_0^1|h'(t)|^2\,\dd t=2I(f).
	\]

	For every $W\in A_\delta$, continuity of the coefficients on the
	relevant compact sets gives finite bounds for the quantities below.
	Writing $A_\delta$ as a countable union, choose deterministic integers
	$K,M\ge1$ such that the event
	\begin{align*}
		A_{\delta,K,M}:=\Bigl\{W\in A_\delta:\ &
		\sup_{0\le t\le1}|F_W(t)|\le K,\\
		&M^{-1}\le\sigma(y)\le M\quad(|y|\le K+1),\\
		&e^{|a(x,W)|}\le M\quad
		(S_W^{-1}(-K-1)\le x\le S_W^{-1}(K+1))\Bigr\}
	\end{align*}
	has positive probability.

	For $r\in(0,1]$, let
	\[
		\omega_{\sigma,W}(r):=
		\sup\{|\sigma(u)-\sigma(v)|:|u|,|v|\le K+1,
		|u-v|\le r\}.
	\]
	Since this modulus tends to zero for every fixed environment, one can
	choose a deterministic $r\in(0,\eta/(4M))$ such that the event
	\[
		A:=A_{\delta,K,M}\cap
		\left\{M^2\|h'\|_{L^2}
		\bigl(M^2C_M\omega_{\sigma,W}(r)\bigr)^{1/2}
		\le\frac\eta4\right\}
	\]
	has positive probability, where $C_M:=2M^5$.

	For $W\in A$, define
	\[
		E_W(r):=\left\{
		\sup_{0\le u\le M^2}|B_\varepsilon(u)-H_W(u)|<r
		\right\}.
	\]
	On $E_W(r)$ one has $|B_\varepsilon(u)|\le K+1$ for $u\le M^2$.
	Since $\sigma\le M$ there,
	\[
		\phi(\varepsilon M^2)
		=\varepsilon\int_0^{M^2}\sigma(B_\varepsilon(u))^{-2}\,\dd u
		\ge\varepsilon,
	\]
	and hence $\phi^{-1}(\varepsilon)/\varepsilon\le M^2$. Also,
	$|u^{-2}-v^{-2}|\le C_M|u-v|$ for $M^{-1}\le u,v\le M$. Using the
	$M$-Lipschitz bounds for $S_W$ and $S_W^{-1}$ on the selected compact
	intervals and
	\[
		|F_W(t)-F_W(s)|
		\le M\|h'\|_{L^2}|t-s|^{1/2},
	\]
	we obtain on $E_W(r)$
	\[
		\|X_\varepsilon-f_W\|_\infty
		\le Mr+M^2\|h'\|_{L^2}
		\bigl(M^2C_M\omega_{\sigma,W}(r)\bigr)^{1/2}
		<\frac\eta2.
	\]
	Together with the choice of $\delta$, this implies
	$E_W(r)\subset\{\|X_\varepsilon-f\|_\infty<\eta\}$.

	Consequently,
	\[
		P_0^{\ann}(\|X_\varepsilon-f\|_\infty<\eta)
		\ge\Pbb(A)\inf_{W\in A}\PB(E_W(r)).
	\]
	Since $T_W(1)\le M^2$ and $H_W$ is extended constantly after
	$T_W(1)$, the family $(H_W)_{W\in A}$ has Cameron--Martin energy
	$I(f)$. Lemma~\ref{lem:uniform-schilder} therefore yields
	\[
		\liminf_{\varepsilon\downarrow0}\varepsilon\log
		\inf_{W\in A}\PB(E_W(r))\ge-I(f).
	\]
	This proves \eqref{eq:tube-lower}.
\end{proof}

	\section{Estimates of the exit probabilities}\label{sec:local-exit-proof}
	In this section, we use Moser iteration to derive the quenched probability of the process $X^W$ exiting from a ball and to prove Theorem 2.2 and Proposition 2.3.
	Throughout this section, the environment $W\in \Omega$ is fixed. For $R>0$, set
	\begin{equation*}
		M_{R}:=\sup_{x \in [-R,R]} | \rho(x,W)|+\sup_{x \in {[-R,R]}} |a(x,W)| .
	\end{equation*}
	All constants below are deterministic and independent of $R$, and may change from line to line.
	
	Recall that the diffusion $X^W$ is associated with the Dirichlet form on $L^2(\R,\mu_W)$
	\begin{equation*}
		\mathcal E_W(f,g)=\int_{\mathbb R} e^{a(x,W)-\rho(x,W)} f'(x)g'(x)  \mu_W(dx),
	\end{equation*}
	and the intrinsic distance is given by
	\begin{equation}
		d_W(x,y)=\left|\int_x^y \sqrt{\frac{\exp(\rho(z,W)-a(z,W))}{2}} \, dz \right|.
	\end{equation}
	Obviously, if $x,y \in [-R,R]$, then
	\begin{equation}\label{R}
		\frac{e^{-M_R/2}}{\sqrt2}|x-y|
		\le d_W(x,y)
		\le\frac{e^{M_R/2}}{\sqrt2}|x-y|.
	\end{equation}
	By \eqref{R} and the definition $D_W(x,r)=\{y\in\mathbb R:d_W(x,y)<r\}$, whenever $\overline{D_W(x,r)}\subset {[-R,R]}$,
	\begin{equation}\label{eq:volume}
		c\,e^{-3M_R/2}r
		\le \mu_W(D_W(x,r))
		\le C\,e^{3M_R/2}r,
	\end{equation}
	which yields
	\begin{equation*}
		\frac{\mu_W(D_W(x,r))}{\mu_W(D_W(x,s))}
		\le C e^{3M_R}\frac{r}{s},\qquad 0<s\le r.
	\end{equation*}
	
	\begin{definition}[Weak sub-solution]\label{def:subcaloric}
		A nonnegative function $u$ on $(0,S)\times D$ is called a bounded weak sub-solution  of the differential equation
		\begin{equation}\label{02}
			\frac{\partial u}{\partial t}={\mathcal L}_W u
		\end{equation}
		if
		\[
		u\in L^\infty((0,S)\times D) \cap L^2_{\mathrm{loc}}((0,S);H^1_{\mathrm{loc}}(D)),
		\]
		and, for every nonnegative $\varphi\in C_c^1((0,S);H_0^1(D)\cap L^\infty(D))$, one has
		\begin{equation*}
			-\int_0^S\!\int_Du\,\partial_s\varphi\,\dd \mu_W\dd s
			+\int_0^S\mathcal E_W(u(s),\varphi(s))\,\dd s\le0.
		\end{equation*}
		If equality holds for all such $\varphi$, then $u$ is called a weak solution.
	\end{definition}
	
	We first show that exit probabilities are weak solutions.
	
	\begin{lemma}\label{lem:exit-caloric}
		Let $D=(\ell,r) $ be a bounded domain and set
		\[
		v(s,y):=P_y^W(\tau_D\le s),\quad y\in D.
		\]
		Then $v$ is a continuous weak solution on the cylinder $(0,S)\times D$.
	\end{lemma}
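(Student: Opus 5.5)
The plan is to pass to the Itô–McKean picture of Lemma~\ref{l3.1} and use the one-dimensional structure to get an explicit formula for $v$. Put $\hat\ell:=S_W(\ell)$, $\hat r:=S_W(r)$ and $\hat D:=(\hat\ell,\hat r)$. Since $S_W$ is a homeomorphism and the time change $\phi$ is a strictly increasing homeomorphism of $\R_+$, we have $\tau_D=\phi(\hat\tau)$, where $\hat\tau$ is the exit time of $B$ from $\hat D$. Hence
\[
v(s,y)=\PB_{S_W(y)}\Bigl(\int_0^{\hat\tau}\sigma(B_u)^{-2}\,\dd u\le s\Bigr).
\]
Because $\sigma$ is continuous and bounded above and below on $\overline{\hat D}$, the clock is comparable to $\hat\tau$. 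From this representation, continuity of $v$ in $y$ follows from continuity of Brownian paths in the starting point together with a.s. strict monotonicity of the additive functional. Continuity in $s$ follows because the law of $\tau_D$ has no atoms, which one checks via the strong Markov property and the fact that $\hat\tau$ has a density. I would also record that $v$ takes values in $[0,1]$, that $v(s,\cdot)\to0$ as $s\downarrow0$ locally in $D$, and that $v(s,y)\to1$ as $y\to\partial D$.

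Next, I would show the weak solution property through the semigroup of the process killed on exiting $D$. Let $P^D_t$ be the part process on $D$, associated with the part Dirichlet form $(\mathcal E_W,\mathcal F_W^D)$ on $L^2(D,\mu_W)$; its domain is $H_0^1(D)$, since the weights are bounded on $\overline D$. Then
\[
1-v(s,y)=P_y^W(\tau_D>s)=P^D_s\mathbf 1(y).
\]
Here $\mathbf 1\in L^2(D,\mu_W)$ because $D$ is bounded. Standard semigroup theory gives that $u(s):=P^D_s\mathbf 1$ belongs to $C^1((0,\infty);L^2)$ with $u(s)\in\mathcal D(L^D)\subset H_0^1(D)$, and that $\langle\partial_s u,\varphi\rangle_{\mu_W}=-\mathcal E_W(u,\varphi)$ for all $\varphi\in H^1_0(D)$. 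I would then integrate against $\varphi\in C_c^1((0,S);H_0^1(D)\cap L^\infty(D))$ and integrate by parts in time; no boundary terms appear because $\varphi$ has compact support in time. This shows that $1-v$ is a weak solution. Since $\mathcal E_W(\mathbf 1,\varphi)=0$ and $\int\partial_s\varphi\,\dd\mu_W\,\dd s=0$ for such $\varphi$, $v=1-u$ is a weak solution as well.

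The regularity required in Definition~\ref{def:subcaloric} is then checked as follows. Boundedness is immediate. For $L^2_{\loc}((0,S);H^1_{\loc}(D))$, one uses $\mathcal E_W(u(s),u(s))\le \|\mathbf 1\|^2_{L^2(\mu_W)}/(2es)$, which comes from analyticity of the symmetric semigroup. Since $\mathcal E_W$ controls $\int_D|f'|^2\,\dd x$ up to $e^{M}$, where $M$ bounds $|a|$ on $\overline D$, this gives local $H^1$ bounds integrable on compact subintervals of $(0,S)$.

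The main obstacle is identifying $P_y^W(\tau_D>s)$ with the part semigroup pointwise for \emph{every} $y$, and not merely $\mu_W$-a.e. I would handle this by using the continuous version supplied by the explicit Brownian formula above. Alternatively, one can note that the killed one-dimensional diffusion has a jointly continuous transition density, so $P_s^D\mathbf 1$ has a continuous version that agrees with the probabilistic expression everywhere.
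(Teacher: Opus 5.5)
Your weak-solution argument is the paper's. You identify $1-v(s,\cdot)=P^D_s\mathbf 1_D$ with the semigroup of the part form on $H_0^1(D)$, use spectral calculus for $\frac{\dd}{\dd s}\langle u(s),\psi\rangle_{L^2(\mu_W)}=-\mathcal E_W(u(s),\psi)$, and use the $1/s$ energy bound for local $H^1$ regularity. You spell out the time integration by parts and the passage from $u$ to $v=1-u$, which the paper leaves tacit.

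The genuine difference is continuity. The paper argues analytically. Spectral calculus makes $t\mapsto P^D_t\mathbf 1_D$ continuous in the $\mathcal E_{W,1}$-norm, hence in $H_0^1(D)$. The embedding $\|u\|_{C(\overline D)}\le|D|^{1/2}\|u'\|_{L^2(D,\dd x)}$ then gives continuity into $C(\overline D)$, i.e.\ joint continuity on $(0,\infty)\times\overline D$ in one step. You argue pathwise from $\tau_D=\phi(\hat\tau)$. This has the merit of confronting the pointwise identification of $v$ with the continuous version, which the paper uses without comment. The cost is more input, and some of your stated reasons are not the ones that do the work.

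\emph{Continuity in $y$.} Uniform convergence of the shifted paths $S_W(y_n)+\beta$ only gives lower semicontinuity of the Brownian exit time in the starting point. The matching upper bound is regularity of $\hat\ell$ and $\hat r$: after hitting a level, Brownian motion immediately crosses it, by the strong Markov property. Strict monotonicity of the clock does not supply this.

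\emph{No atoms.} The density of $\hat\tau$ does not by itself yield that $\tau_D=\int_0^{\hat\tau}\sigma(B_u)^{-2}\,\dd u$ has no atoms, because the clock is path dependent. A correct argument is $P^W_y(\tau_D=s)\le P^W_y(X^W_s\in\{\ell,r\})=0$, using that one-dimensional diffusions have transition densities with respect to $\mu_W$. The eigenfunction expansion of $P^D_s\mathbf 1_D$ would also do. You need this fact for continuity in $y$ too, to pass from a.s.\ convergence of exit times to convergence of $v(s,y_n)$.

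\emph{Joint continuity.} Separate continuity in $s$ and in $y$ must still be made joint. Monotonicity in $s$ does it: $v(s_0-\delta,y_n)\le v(s_n,y_n)\le v(s_0+\delta,y_n)$, then let $n\to\infty$ and $\delta\downarrow0$.

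\emph{A shortcut.} If you grant the jointly continuous killed density $p^D$ from your last paragraph, continuity of $v(s,y)=1-\int_D p^D_s(y,z)\,\mu_W(\dd z)$ is immediate and the Brownian argument is unnecessary. The most economical complete proof is the paper's $C(\overline D)$-continuity combined with such a pointwise identification.
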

	
	\begin{proof}
		Since
		\begin{equation*}
			P_t^Df(x):=E_x^W\!\left[f(X_t^W);\,t<\tau_D\right]
		\end{equation*}
		is the semigroup of the process killed upon exiting $D$, by Lemma \ref{l3.1}, the semigroup $P_t^D$ is associated with the part form
		\begin{equation*}
			\mathcal E_W^D(f,g)=\mathcal E_W(f,g),
			\qquad  \mathcal F_W^D
			:=\overline{C_c^\infty(D)}^{\,\mathcal E_{W,1}}
			=\{f\in\mathcal F_W:\widetilde f=0
			\text{ quasi-everywhere on }D^c\}.
		\end{equation*}
		Because $D$ is bounded and $a,\rho$ are continuous, the $\mathcal E_{W,1}$-norm and the usual $H^1(D)$-norm are equivalent on $C_c^\infty(D)$. It follows that $\mathcal F_W^D=H_0^1(D)$.
		
		Let $-A_D$ be the non-positive self-adjoint generator of $(P_t^D)_{t\geq0}$, and set
		\begin{equation*}
			h(t,x):=1-v(t,x)=P_t^D\mathbf 1_D(x).
		\end{equation*}
		Since $\mathbf 1_D\in L^2(D,\mu_W)$, the spectral theorem implies that, for every $t>0$,
		\[
		h(t)=P_t^D\mathbf 1_D\in\mathcal D(A_D),
		\]
		and for every $\psi\in H_0^1(D)$,
		\begin{equation*}
			\frac{d}{dt}\langle h(t),\psi\rangle_{L^2(\mu_W)}
			=\langle - A_D h(t),\psi\rangle_{L^2(\mu_W)}
			=-\mathcal E_W^D(h(t),\psi).
		\end{equation*}
		Therefore $v=1-h$ is a weak solution.
		
		Finally, we show that $v$ is continuous on $(0,S)\times D$. For $f\in L^2(D,\mu_W)$ and $t>0$, the spectral theorem gives
		\begin{align*}
			\|A_D^{1/2}e^{-tA_D}f\|_{L^2(\mu_W)}^2\leq \frac{C}{t}\|f\|_{L^2(\mu_W)}^2.
		\end{align*}
		Hence $P_t^Df\in H_0^1(D)$ for every $t>0$. More precisely, if $t_n\to t_0>0$, then
		\begin{align}
			&\|P_{t_n}^Df-P_{t_0}^Df\|_{L^2(\mu_W)}^2
			+\mathcal E_W^D(P_{t_n}^Df-P_{t_0}^Df,
			P_{t_n}^Df-P_{t_0}^Df)\notag\\
			&\qquad
			=\int_{[0,\infty)}(1+\lambda)
			|e^{-t_n\lambda}-e^{-t_0\lambda}|^2\,d\nu_f(\lambda)
			\longrightarrow0.
			\label{eq:form-norm-time-continuity-app}
		\end{align}
		Taking $f=\mathbf1_D$ in \eqref{eq:form-norm-time-continuity-app} and using the equivalence of the $\mathcal E_{W,1}$ norm and the $H^1(D)$-norm, we conclude that
		\[
		(0,\infty)\ni t\longmapsto h(t)=P_t^D\mathbf1_D
		\quad\text{is continuous in }H_0^1(D).
		\]
		On the other hand, the one-dimensional Sobolev inequality shows that every $h(t)$ has a continuous representative on $\overline D$ and
		\begin{equation*}
			\|u\|_{C(\overline D)}
			\leq |D|^{1/2}\|u'\|_{L^2(D,dx)},
			\qquad u\in H_0^1(D).
		\end{equation*}
		Therefore, \eqref{eq:form-norm-time-continuity-app} implies that $t\mapsto h(t)$ is continuous in $C(\overline D)$. This proves the joint continuity on $(0,\infty)\times\overline D$.
	\end{proof}

	The next two lemmas provide the Caccioppoli inequality and the one-dimensional weighted Sobolev inequality, which are crucial for the Moser iteration.
	
	\begin{lemma}[Caccioppoli inequality]\label{lem:caccioppoli}
		Let $u\ge0$ be a bounded weak sub-solution to equation~\eqref{02} on $(s_1,s_2)\times I$, where $I$ is a bounded interval. Let $p\ge2$, $\zeta\in C_c^1(I)$, and $\chi\in C_c^1((s_1,s_2))$ satisfy $0\le\zeta,\chi\le1$. Then
		\begin{align}\label{eq:caccioppoli}
			&\operatorname*{ess\,sup}_{s\in[s_1,s_2]}
			\chi(s)^2\int_I\zeta^2u(s)^p\,\dd\mu_W
			+\int_{s_1}^{s_2}\!\chi^2\int_I\zeta^2
			\left|\partial_y(u^{p/2})\right|^2e^{a}\,\dd y\,\dd s\\
			&\quad\le Cp^2\left(\|\chi'\|_{L^\infty(s_1,s_2)}+
			\left\|2e^{a-\rho}(\zeta')^2\right\|_{L^\infty(I)}\right)
			\int_{s_1}^{s_2}\!\int_{\operatorname{supp}\zeta}u^p\,\dd\mu_W\,\dd s.\nonumber
		\end{align}
	\end{lemma}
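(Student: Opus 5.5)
The plan is the standard energy argument: test the sub-solution inequality with $\varphi=\chi^2\zeta^2u^{p-1}$, after a time regularisation. Since $u$ is only a weak sub-solution with $u\in L^2_{\loc}H^1_{\loc}$, $u^{p-1}$ is not directly admissible in time. I would therefore first replace $u$ by a Steklov average $u_h(s)=h^{-1}\int_s^{s+h}u(\tau)\,\dd\tau$. This turns the weak inequality into
\[
\int_I\partial_s u_h\,\psi\,\dd\mu_W+\mathcal E_W\bigl((u)_h,\psi\bigr)\le 0
\]
for a.e. $s$ and nonnegative $\psi\in H_0^1(I)\cap L^\infty$. Boundedness of $u$ guarantees that $\psi=\chi^2\zeta^2u_h^{p-1}\in H_0^1(I)\cap L^\infty$, since $p-1\ge1$ and $\zeta$ has compact support in $I$. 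All constants will be tracked only through $p$, $\chi'$ and the weight $e^{a-\rho}(\zeta')^2$; this is why $M$ never appears.

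With this test function, the time term gives $\frac1p\partial_s\int\chi^2\zeta^2u_h^p\,\dd\mu_W-\frac2p\int\chi\chi'\zeta^2u_h^p\,\dd\mu_W$. Integrating in $s$ from $s_1$ to any $s\le s_2$, and using $\chi(s_1)=0$, leaves the supremum term plus an error bounded by $\frac2p\|\chi'\|_\infty\iint_{\supp\zeta}u^p$. In the energy term, $\partial_y(\zeta^2u^{p-1})=(p-1)\zeta^2u^{p-2}u'+2\zeta\zeta'u^{p-1}$. The main term equals $\frac{4(p-1)}{p^2}\int\zeta^2|\partial_y u^{p/2}|^2e^{a}\,\dd y$. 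Recall $\dd\mu_W=e^\rho\,\dd y$, so the form density is $e^{a}\,\dd y$, matching the left side of \eqref{eq:caccioppoli}.

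For the cross term, $2\int\zeta\zeta'u^{p-1}u'e^a\,\dd y=\frac4p\int\zeta\zeta' u^{p/2}\partial_y(u^{p/2})e^a\,\dd y$. Young's inequality bounds it by
\[
\frac{2(p-1)}{p^2}\int\zeta^2|\partial_y u^{p/2}|^2e^a\,\dd y+\frac{2}{p-1}\int(\zeta')^2u^pe^{a}\,\dd y .
\]
The last integral equals $\int e^{a-\rho}(\zeta')^2u^p\,\dd\mu_W$, which is controlled by $\|2e^{a-\rho}(\zeta')^2\|_{L^\infty(I)}\int_{\supp\zeta}u^p\,\dd\mu_W$.

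Absorbing half the gradient term and multiplying through by $p^2/(p-1)$, using $p/(p-1)\le2$ for $p\ge2$, gives both the sup term and the gradient term. Their coefficients are bounded below by absolute constants, and the right side is $Cp^2(\|\chi'\|_\infty+\|2e^{a-\rho}(\zeta')^2\|_\infty)\iint u^p$; the factor $p^2$ is generous but harmless. The supremum over $s$ and the gradient integral are handled separately, each bounded by the same right side, and then added.

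The main obstacle is the justification step: passing $h\to0$ in the Steklov-averaged inequality. This needs $u_h\to u$ in $L^2_{\loc}H^1_{\loc}$ on $\supp\zeta$ and $u_h^p\to u^p$ in $L^1$, both of which follow from boundedness of $u$. One also needs that the Steklov time derivative produces $\frac1p\partial_s u_h^p$ as an inequality, which uses convexity: $(u_h(s+\cdot)-u_h)u_h^{p-1}\ge\frac1p(u_h^p(s+\cdot)-u_h^p)$. Lower semicontinuity then handles the ess sup term. Since $\zeta$ has compact support inside $I$ and $a,\rho$ are continuous, the weights are bounded above and below there, so all these convergences hold in the unweighted $H^1$ sense.
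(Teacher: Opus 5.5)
Your proposal is correct and follows essentially the paper's argument: regularize in time (you use a Steklov average where the paper uses a symmetric mollifier on $J\Subset(s_1,s_2)$), test with $\chi^2\zeta^2u_h^{p-1}$, and split the cross term by the same Young inequality; then integrate in time and let $h\to0$ via Fatou and lower semicontinuity. One small slip: the convexity inequality you quote points the wrong way, since the tangent-line bound for $x\mapsto x^p/p$ gives $(b-a)a^{p-1}\le\frac1p(b^p-a^p)$. You do not need it, because $u_h$ is Lipschitz in $s$, so $u_h^{p-1}\partial_su_h=\frac1p\partial_s(u_h^p)$ holds exactly by the chain rule.
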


	\begin{proof}
		Choose $J\Subset(s_1,s_2)$ containing $\operatorname{supp}\chi$, and mollify $u$ in time on $J$:
		\[
			u_h(s,y):=\frac1h\int_{\R}\eta\left(\frac{s-r}{h}\right)u(r,y)\,\dd r,
		\]
		where $\eta\in C_c^\infty((-1,1))$ is a nonnegative mollifier and $h<\operatorname{dist}(J,\{s_1,s_2\})$. Then $u_h$ remains weak subcaloric on $J\times I$ and, for every nonnegative spatial test function $\psi$,
		\begin{equation*}
			\langle\partial_su_h,\psi\rangle_{L^2(\mu_W)}+\mathcal E_W(u_h,\psi)\le0.
		\end{equation*}
		Insert $\psi=\chi^2\zeta^2u_h^{p-1}$. The Sobolev chain rule and Young's inequality give
		\begin{align*}
			&\frac{d}{\dd s}\left(\chi^2\int_I\zeta^2u_h^p\,\dd\mu_W\right)
			+c\chi^2\int_I\zeta^2|\partial_y(u_h^{p/2})|^2e^a\,\dd y\\
			&\qquad\le Cp^2\left(\|\chi'\|_\infty+
			\|2e^{a-\rho}(\zeta')^2\|_{L^\infty(I)}\right)
			\int_{\operatorname{supp}\zeta}u_h^p\,\dd\mu_W.
		\end{align*}
		Here the mixed term is estimated using
		\[
			2|\zeta\zeta'|u_h^{p-1}|\partial_yu_h|
			\le\frac{p-1}{2}\zeta^2u_h^{p-2}|\partial_yu_h|^2
			+\frac{2}{p-1}(\zeta')^2u_h^p.
		\]
		Integrating in time and taking the essential supremum gives the analogue of \eqref{eq:caccioppoli} for $u_h$.

		Let $h_n\downarrow0$. After passing to a subsequence, $u_{h_n}\to u$ a.e. and in $L^2_{\mathrm{loc}}((s_1,s_2);H^1(I))$; boundedness of $u$ and the chain rule imply $u_{h_n}^{p/2}\to u^{p/2}$ in the corresponding local energy space. Fatou's lemma for the essential-supremum term and lower semicontinuity of the energy term then yield \eqref{eq:caccioppoli}.
	\end{proof}
	
	\begin{lemma}\label{lem:sobolev}
		Let $I$ be a bounded interval and let $f\in H_0^1(I)$. Then
		\begin{equation}\label{eq:sobolev1}
			\|f\|_{L^\infty(I)}^2
			\le 2 e^{M(I)}\|f\|_{L^2(I,\mu_W)}\,\mathcal E_W(f,f)^{1/2}.
		\end{equation}
		Consequently,
		\begin{equation}\label{eq:sobolev2}
			\int_I|f|^6\,\dd \mu_W
			\le 16 e^{2M(I)}\left(\int_If^2\,\dd \mu_W\right)^2\mathcal E_W(f,f).
		\end{equation}
	\end{lemma}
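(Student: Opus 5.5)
The approach is a one-dimensional Sobolev (Agmon/Gagliardo--Nirenberg type) argument: use the fundamental theorem of calculus on $f^2$, then Cauchy--Schwarz, inserting the weights $e^{\rho}$ and $e^{a}$ and paying for their removal with the factor $e^{M(I)}$.

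\emph{Step 1: pointwise bound.} Since $f\in H_0^1(I)$ with $I$ bounded, $f$ has a continuous representative vanishing at the endpoints of $I$. For $x\in I$, write $I=(\ell,r)$. Integrating $(f^2)'=2ff'$ from $\ell$ to $x$ and from $x$ to $r$, and averaging the two identities, gives
\[
f(x)^2\le \int_I |f|\,|f'|\,\dd y .
\]
We now insert weights:
\[
|f||f'|=\bigl(|f|e^{\rho/2}\bigr)\bigl(|f'|e^{a/2}\bigr)e^{-(\rho+a)/2}.
\]
On $I$, $e^{-(\rho+a)/2}\le e^{M(I)/2}$, where $M(I)=\sup_I|\rho|+\sup_I|a|$. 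Cauchy--Schwarz then yields
\[
f(x)^2\le e^{M(I)/2}\Bigl(\int_I f^2e^{\rho}\dd y\Bigr)^{1/2}\Bigl(\int_I (f')^2e^{a}\dd y\Bigr)^{1/2}.
\]

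\emph{Step 2: identify the form.} By definition of $\mathcal E_W$ we have $\int_I (f')^2e^{a}\dd y=\mathcal E_W(f,f)$ when $f$ is extended by zero outside $I$; this extension lies in $\mathcal F_W$ because $f\in H_0^1(I)$ and the coefficients are continuous, hence bounded on $\bar I$. The first factor is $\|f\|_{L^2(I,\mu_W)}$. Therefore
\[
\|f\|_\infty^2\le e^{M(I)/2}\|f\|_{L^2(\mu_W)}\mathcal E_W(f,f)^{1/2},
\]
and this is bounded by $2e^{M(I)}(\cdots)$ because $M(I)\ge0$. This proves \eqref{eq:sobolev1}, with room to spare in the constant.

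\emph{Step 3: the $L^6$ bound.} We write
\[
\int_I|f|^6\,\dd\mu_W\le \|f\|_\infty^4\int_I f^2\,\dd\mu_W .
\]
Squaring \eqref{eq:sobolev1} gives
\[
\|f\|_\infty^4\le 4e^{2M(I)}\|f\|_{L^2(\mu_W)}^2\,\mathcal E_W(f,f).
\]
Substituting, we obtain
\[
\int_I|f|^6\,\dd\mu_W\le 4e^{2M(I)}\Bigl(\int_I f^2\,\dd\mu_W\Bigr)^2\mathcal E_W(f,f),
\]
which is stronger than \eqref{eq:sobolev2} since $4\le16$.

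There is no serious obstacle. The only points needing care are the justification of the fundamental theorem of calculus for $f^2$ (absolute continuity of $H^1$ functions in one dimension, with vanishing boundary trace) and the consistent tracking of which exponential weight accompanies $f$ and which accompanies $f'$, so that the constant comes out as $e^{M(I)}$ with no dependence on the length of $I$.
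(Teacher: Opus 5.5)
Your proof is correct and follows the same route as the paper: the fundamental theorem of calculus for $f^2$, then Cauchy--Schwarz with the weights $e^{\rho}$ and $e^{a}$ inserted at a cost bounded by $e^{M(I)/2}\le e^{M(I)}$, and finally $\int_I|f|^6\,\dd\mu_W\le\|f\|_\infty^4\int_I f^2\,\dd\mu_W$. The differences are only cosmetic: you average the two one-sided identities to get constant $1$ instead of $2$, and you work directly with the absolutely continuous representative rather than by density of $C_c^1(I)$. You also write the removed weight correctly as $e^{-(\rho+a)/2}$; the paper's intermediate display has $e^{(\rho+a)/2}$, a harmless sign slip since both are bounded by $e^{M(I)/2}$.
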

	
	\begin{proof}
		It is enough to consider $f\in C_c^1(I)$. By the fundamental theorem of calculus, for every $x \in I$,
		\begin{equation}\label{4.14}
			|f(x)|^2\le2\int_I|f(y)f'(y)|\,\dd y \le2 \left( \int_I f(y)^2 \dd y\right)^{1/2}  \left(\int_I|f'(y)|^2\,\dd y\right)^{1/2} .
		\end{equation}
		Since $\mu_W(dx)=e^{\rho(x,W)}\,dx$ and $\mathcal E_W(f,g)=\int_{\mathbb R} e^{a(x,W)-\rho(x,W)} f'(x)g'(x)  \mu_W(dx)$, \eqref{4.14} yields
		\[
		\|f\|_{L^\infty(I)}^2
		\le 2 \sup_{y\in I}\exp\left\{\frac{\rho(y,W)+a(y,W)}2\right\}
		\left(\int_If^2\,\dd \mu_W\right)^{1/2}
		\left(\int_Ie^{a}|f'|^2\,\dd y\right)^{1/2}.
		\]
		This proves \eqref{eq:sobolev1}. \eqref{eq:sobolev2} follows immediately from \eqref{eq:sobolev1} and $\int|f|^6\dd \mu_W\le\|f\|_\infty^4\int f^2\dd \mu_W$.
	\end{proof}
	
	For $\ell>0$, $t_0>0$, and $x_0\in\mathbb R$, define the intrinsic cylinders
	\[
	I_\lambda=D_W(x_0,\lambda\ell),\qquad
	J_\lambda=(t_0-\lambda^2\ell^2,t_0+\lambda^2\ell^2),\qquad
	Q_\lambda=J_\lambda\times I_\lambda.
	\]
	
	\begin{lemma}[Moser iteration step]\label{lem:moser-step}
		Assume that $Q_1\subset(0,S)\times {[-R,R]}$ and let $u\ge0$ be a bounded weak sub-solution to equation (\ref{02}) on $Q_1$. There is an absolute constant $C$ such that, for every $p\ge2$ and $1/2\le\lambda'<\lambda\le1$,
		\begin{equation}\label{eq:moser-step}
			\|u\|_{L^{3p}(Q_{\lambda'},\mu_W\dd s)}
			\le\left(\frac{Ce^{cM_R}p^2}{(\lambda-\lambda')^2\ell^2}\right)^{1/p}
			\|u\|_{L^p(Q_\lambda,\mu_W\dd s)}.
		\end{equation}
	\end{lemma}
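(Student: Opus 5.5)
The plan is the standard single Moser step: Caccioppoli on nested cutoffs, then the weighted Sobolev inequality, then Hölder in time to pass from $p$ to $3p$.

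\textbf{Cutoffs.} Fix $p\ge2$ and $1/2\le\lambda'<\lambda\le1$.

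The spatial cutoff $\zeta$ should be built in the intrinsic coordinate. Concretely, set $\zeta(y)=\psi(d_W(x_0,y))$, where $\psi\equiv1$ on $[0,\lambda'\ell]$, $\psi\equiv0$ on $[\lambda\ell,\infty)$, and $|\psi'|\le 2/((\lambda-\lambda')\ell)$.

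Then $\zeta'(y)=\pm\psi'(d_W)\,q_W(y)$ with $q_W^2=\tfrac12 e^{\rho-a}$. This gives
\[
2e^{a-\rho}(\zeta')^2=|\psi'|^2\le \frac{4}{(\lambda-\lambda')^2\ell^2}.
\]
This is why the intrinsic distance is used: the spatial cutoff term in Lemma~\ref{lem:caccioppoli} carries no $M_R$ factor.

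The time cutoff $\chi$ equals $1$ on $J_{\lambda'}$, vanishes near $\partial J_\lambda$, and satisfies $|\chi'|\le 2/((\lambda-\lambda')\ell^2)$. Since $\lambda+\lambda'\ge1$, this is at most $C/((\lambda-\lambda')^2\ell^2)$ up to absolute constants. Strictly, $\chi\in C_c^1(J_\lambda)$ cannot equal $1$ at the lower endpoint; this is handled by the usual slight enlargement or a limiting argument.

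\textbf{Caccioppoli step.} Applying Lemma~\ref{lem:caccioppoli} with these cutoffs gives
\[
\sup_s\int \zeta^2\chi^2 u^p\,\dd\mu_W+\int\!\!\int \chi^2\zeta^2|\partial_y(u^{p/2})|^2e^a \le A\,\|u\|_{L^p(Q_\lambda)}^p,
\qquad A:=\frac{Cp^2}{(\lambda-\lambda')^2\ell^2}.
\]
The energy term we actually need is that of $w:=\chi\zeta u^{p/2}\in H^1_0(I_\lambda)$ for each $s$. Its extra piece $\int\chi^2(\zeta')^2u^pe^a\,\dd y$ is also bounded by $A\|u\|_p^p$, because $(\zeta')^2e^a=\tfrac12(2e^{a-\rho}(\zeta')^2)\,e^{\rho}$. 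Hence
\[
\sup_s\int w^2\,\dd\mu_W+\int\mathcal E_W(w,w)\,\dd s\le A\|u\|_p^p.
\]

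\textbf{Sobolev and time integration.} Apply \eqref{eq:sobolev2} to $w(s)$ for each fixed $s$. Since $w^6=\chi^6\zeta^6u^{3p}\ge \mathbf 1_{Q_{\lambda'}}u^{3p}$, we get
\[
\int_{Q_{\lambda'}}u^{3p}\,\dd\mu_W\dd s\le 16e^{2M_R}\Bigl(\sup_s\int w^2\,\dd\mu_W\Bigr)^2\int\mathcal E_W(w,w)\,\dd s\le 16e^{2M_R}A^3\|u\|_{L^p(Q_\lambda)}^{3p}.
\]
Here $M(I_\lambda)\le M_R$ because $I_\lambda\subset[-R,R]$. Taking the $3p$-th root gives
\[
\|u\|_{L^{3p}(Q_{\lambda'})}\le (16e^{2M_R})^{1/(3p)}A^{1/p}\|u\|_{L^p(Q_\lambda)},
\]
and absorbing $(16e^{2M_R})^{1/(3p)}\le (Ce^{cM_R})^{1/p}$ yields \eqref{eq:moser-step}.

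\textbf{Main obstacle.} The delicate part is the bookkeeping, not the analysis. One has to check that the Caccioppoli constant $\|2e^{a-\rho}(\zeta')^2\|_\infty$ is indeed environment-free with the intrinsic cutoff; otherwise an $e^{M_R}$ factor appears there, which is harmless anyway since it is also absorbed into $Ce^{cM_R}$. One also has to verify that the cross term in $\partial_y w$ is controlled as shown.
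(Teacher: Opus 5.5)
Your proposal is correct and follows the paper's proof step for step: cutoffs, Lemma~\ref{lem:caccioppoli}, $w=\chi\zeta u^{p/2}$, Lemma~\ref{lem:sobolev} applied for each $s$ and integrated in time, then the $3p$-th root. The only difference is minor. You build $\zeta$ from $d_W(x_0,\cdot)$, so that $2e^{a-\rho}(\zeta')^2$ carries no $M_R$, whereas the paper uses \eqref{R} and absorbs the resulting $e^{4M_R}$ into $Ce^{cM_R}$. Also, your worry about $\chi$ at the lower endpoint is unnecessary, since $\overline{J_{\lambda'}}\subset J_\lambda$.
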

	
	\begin{proof}
		By \eqref{R}, we can choose a spatial cutoff $\zeta$ which equals one on $I_{\lambda'}$, vanishes outside $I_\lambda$, and satisfies
		\begin{equation}\label{eq:cutoff}
			|\zeta'|
			\le\frac{C e^{M_R}}{(\lambda-\lambda')\ell}.
		\end{equation}
		Choose a time cutoff $\chi$ which equals one on $J_{\lambda'}$, vanishes near the ends of $J_\lambda$, and satisfies $|\chi'|\le C((\lambda-\lambda')^2\ell^2)^{-1}$. Put $w:=\chi\zeta u^{p/2}$. Since
		\[
		|\partial_yw|^2
		\le2\chi^2\zeta^2|\partial_yu^{p/2}|^2
		+2\chi^2(\zeta')^2u^p,
		\]
		Lemma~\ref{lem:caccioppoli}, together with \eqref{eq:cutoff}, gives
		\begin{align}\label{eq:moser-energy}
			&\sup_s\int_{I_\lambda}w(s)^2\,\dd \mu_W
			+\int_{J_\lambda}\mathcal E_W(w(s),w(s))\,\dd s\\
			&\qquad\le\frac{Cp^2 e^{4 M_R}}{(\lambda-\lambda')^2\ell^2}
			\int_{Q_\lambda}u^p\,\dd \mu_W\dd s.\nonumber
		\end{align}
		Applying Lemma~\ref{lem:sobolev} to $w(s,\cdot)$ and integrating in time, we obtain
		\begin{equation}\label{eq:space-time-sobolev}
			\int_{Q_\lambda}w^6\,\dd \mu_W\dd s
			\le 16e^{2M_R}\left(\sup_s\int_{I_\lambda}w(s)^2\,\dd \mu_W\right)^2
			\int_{J_\lambda}\mathcal E_W(w(s),w(s))\,\dd s.
		\end{equation}
		Since $w=u^{p/2}$ on $Q_{\lambda'}$, substitution of \eqref{eq:moser-energy} into \eqref{eq:space-time-sobolev} yields
		\[
		\int_{Q_{\lambda'}}u^{3p}\,\dd \mu_W\dd s
		\le\left(\frac{Ce^{cM_R}p^2}{(\lambda-\lambda')^2\ell^2}
		\int_{Q_\lambda}u^p\,\dd \mu_W\dd s\right)^3.
		\]
		Taking the power $1/(3p)$ proves \eqref{eq:moser-step}.
	\end{proof}
	
	\begin{lemma}[Local mean-value estimate]\label{lem:mean1}
		Assume that $Q_1\subset(0,S)\times {[-R,R]}$ and let $u\ge0$ be a bounded weak sub-solution to equation (\ref{02}) on $Q_1$. Then
		\begin{equation}\label{eq:mean1}
			\|u\|_{L^{\infty}(Q_{1/2})}^2
			\le\frac{Ce^{cM_R}}{\ell^2\mu_W(I_1)}
			\int_{Q_1}u(s,y)^2\,\dd \mu_W(y)\dd s.
		\end{equation}
	\end{lemma}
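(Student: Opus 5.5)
The plan is to iterate Lemma~\ref{lem:moser-step} along a geometric sequence of exponents and radii. Set $p_k:=2\cdot 3^k$ and $\lambda_k:=\tfrac12+2^{-k-1}$ for $k\ge0$, so that $\lambda_0=1$, $\lambda_k\downarrow\tfrac12$, and $\lambda_k-\lambda_{k+1}=2^{-k-2}$. For every $k$ we have $Q_{\lambda_k}\subset Q_1\subset(0,S)\times[-R,R]$ and $1/2\le\lambda_{k+1}<\lambda_k\le1$, and $u$ is a bounded weak sub-solution on each of these cylinders. Hence Lemma~\ref{lem:moser-step} applies with $p=p_k$, $\lambda=\lambda_k$, $\lambda'=\lambda_{k+1}$, and gives
\[
\|u\|_{L^{p_{k+1}}(Q_{\lambda_{k+1}})}\le\Bigl(\frac{Ce^{cM_R}\,4\cdot 9^k\,16^{k+1}}{\ell^2}\Bigr)^{1/p_k}\|u\|_{L^{p_k}(Q_{\lambda_k})}.
\]

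Next, chain these inequalities from $k=0$ to $n-1$ and let $n\to\infty$. Because $u$ is bounded, $\|u\|_{L^{p_n}(Q_{\lambda_n})}\to\|u\|_{L^\infty(Q_{1/2})}$: the cylinders $Q_{\lambda_n}$ contain $Q_{1/2}$ and have finite measure, so a standard argument gives $\liminf_n\|u\|_{L^{p_n}(Q_{\lambda_n})}\ge\|u\|_{L^\infty(Q_{1/2})}$, which is the direction we need. The constants multiply to
\[
\prod_k\bigl(Ce^{cM_R}\ell^{-2}\bigr)^{1/p_k}\prod_k(C'\,144^k)^{1/p_k}.
\]
Since $\sum_k 1/p_k=\sum_k\tfrac{1}{2}3^{-k}=\tfrac34$ and $\sum_k k/p_k<\infty$, this yields
\[
\|u\|_{L^\infty(Q_{1/2})}\le C e^{cM_R}\ell^{-3/2}\,\|u\|_{L^2(Q_1,\mu_W\dd s)}.
\]

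The main obstacle is that this bound carries the power $\ell^{-3/2}$ (that is, $\ell^{-3}$ after squaring), whereas \eqref{eq:mean1} asks for the normalization $(\ell^2\mu_W(I_1))^{-1}$, the reciprocal of the space-time volume of $Q_1$. To reconcile the two, I will use the volume bound \eqref{eq:volume}. Applying it with radius $\ell$, and noting that $\overline{I_1}\subset[-R,R]$ can be arranged (or shrink slightly), gives $\mu_W(I_1)\le Ce^{3M_R/2}\ell$. Therefore
\[
\ell^{-3}\le Ce^{3M_R/2}\bigl(\ell^2\mu_W(I_1)\bigr)^{-1}.
\]
Squaring the iterated inequality and absorbing $e^{3M_R/2}$ into $e^{cM_R}$ then gives \eqref{eq:mean1}.

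If one prefers a scale-free iteration, an alternative is to rescale the Sobolev inequality \eqref{eq:sobolev2} by $\mu_W(I_\lambda)$, using \eqref{eq:volume} in both directions, so that each step carries the normalized factor $(\ell^2\mu_W)^{-1}$ directly. Either route loses only a factor $e^{cM_R}$, which is all the statement allows. The only care needed is to track that each use of \eqref{R} or \eqref{eq:volume} contributes at most a factor $e^{cM_R}$, with $c$ absolute.
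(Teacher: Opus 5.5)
Your proposal is correct and follows the paper's proof essentially verbatim: you use the same choice $p_k=2\cdot3^k$, $\lambda_k=\tfrac12+2^{-k-1}$, iterate Lemma~\ref{lem:moser-step} to get $\|u\|_{L^\infty(Q_{1/2})}\le Ce^{cM_R}\ell^{-3/2}\|u\|_{L^2(Q_1,\mu_W\dd s)}$, and use the upper volume bound $\mu_W(I_1)\le Ce^{3M_R/2}\ell$ to convert $\ell^{-3}$ into $(\ell^2\mu_W(I_1))^{-1}$. One harmless slip: $(\lambda_k-\lambda_{k+1})^{-2}=16\cdot4^k$, not $16^{k+1}$, which only worsens the constant; also, the upper volume bound needs only $I_1\subset[-R,R]$, not $\overline{I_1}\subset[-R,R]$.
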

	
	\begin{proof}
		Let $p_n=2\cdot3^n$ and $\lambda_n=1/2+2^{-n-1}$. Applying Lemma~\ref{lem:moser-step} with $p=p_n$, $\lambda=\lambda_n$, and $\lambda'=\lambda_{n+1}$ gives
		\begin{equation}\label{03}
			\|u\|_{L^{p_{n+1}}(Q_{\lambda_{n+1}})}
			\le\left(\frac{Ce^{cM_R}p_n^2 4^n}{\ell^2}\right)^{1/p_n}
			\|u\|_{L^{p_n}(Q_{\lambda_n})}.
		\end{equation}
		Because the three series
		\[
		\sum_{n\ge0}\frac1{p_n},\qquad
		\sum_{n\ge0}\frac n{p_n},\qquad
		\sum_{n\ge0}\frac{\log p_n}{p_n}
		\]
		are finite, iterating (\ref{03})  over $n$ yields
		\begin{equation}\label{eq:Linfty}
			\|u\|_{L^\infty(Q_{1/2})}
			\le Ce^{cM_R}\ell^{-3/2}\|u\|_{L^2(Q_1,\mu_W\dd s)}.
		\end{equation}
		By \eqref{eq:volume}, $\mu_W(I_1)\le2e^{3M_R/2}\ell$. Squaring \eqref{eq:Linfty} and absorbing the additional exponential factor proves \eqref{eq:mean1}.
	\end{proof}
	
	We next prove the weighted $L^2$ estimate for the exit probability. This is the integral maximum principle with respect to the intrinsic distance \eqref{eq:distance}.
	
	\begin{lemma}[Weighted $L^2$ exit estimate]\label{lem:L2exit}
		Take intervals $A \subset D\subset {[-R,R]}$. Assume $v$ is a weak solution to equation (\ref{02}) on $(0,S)\times D$ satisfying $0 \le v \le 1$ and $v(0,x)=0,x \in D$. Assume
		\[
		r:=\inf\{d_W(y,z):y\in A,\ z\notin D\}>0.
		\]
		Then, for every $t \in (0,S)$,
		\begin{equation}\label{eq:L2exit}
			\int_Av(t,y)^2\,\dd \mu_W(y)
			\le C\mu_W(D\setminus A)
			\left(1+\frac{t}{r^2}+\frac{r^2}{t}\right)
			\exp\left\{-\frac{r^2}{t}\right\},
		\end{equation}
		where $C$ is an absolute constant.
	\end{lemma}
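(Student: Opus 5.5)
We follow the Davies--Grigor'yan integral maximum principle. Fix $t\in(0,S)$ and a parameter $\gamma>0$ to be chosen later. Define the Lipschitz weight
\[
\xi(s,y):=\frac{\gamma\, d_W(y,A)^2\wedge r^2}{s-t-\delta}\quad\text{(or the Grigor'yan form } \xi(s,y)=\tfrac{\rho_A(y)^2}{2(s-t')}\text{)},
\]
where $\rho_A(y):=(d_W(y,A)-r)_-$-type truncations are used so that $\xi$ vanishes near $\partial D$. The key property we use is $|\partial_y d_W(\cdot,A)|^2\,2e^{a-\rho}\le 1$, which holds because $\partial_y d_W=q_W=\sqrt{e^{\rho-a}/2}$. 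Hence
\[
\partial_s\xi+\tfrac12\cdot 2e^{a-\rho}|\partial_y\xi|^2\le0
\]
in the intrinsic sense: with $\xi=\frac{\rho^2}{2(s-\tau)}$, $\tau>t$, one gets $\partial_s\xi=-\frac{\rho^2}{2(s-\tau)^2}$ and $e^{a-\rho}|\partial_y\xi|^2\le \frac{\rho^2}{2(s-\tau)^2}$. Note the factor: $\mathcal E_W$ has density $e^{a-\rho}$ with respect to $\mu_W$, and $d_W$ carries the factor $1/\sqrt2$. This matches $\exp(-r^2/t)$ rather than $\exp(-r^2/(4t))$, so the normalization must be tracked carefully.

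The first step is the energy identity. Since $v$ is not zero on $\partial D$, we work with $v$ directly and use that $v(0,\cdot)=0$. Choose a cutoff $\psi$ supported in $D$ with $\psi=1$ on $\{d_W(\cdot,A)\le r'\}$, $r'<r$, and test the equation with $\varphi=\psi^2 e^{\xi}v$, justified by time mollification exactly as in Lemma~\ref{lem:caccioppoli}. Differentiating $\int \psi^2 e^{\xi}v^2\dd\mu_W$ in $s$ and using Cauchy--Schwarz gives
\[
\frac{d}{ds}\int\psi^2e^\xi v^2\dd\mu_W\le \int\psi^2e^\xi v^2\bigl(\partial_s\xi+e^{a-\rho}|\partial_y\xi|^2\bigr)\dd\mu_W+C\int e^{\xi}v^2\,e^{a-\rho}|\psi'|^2\dd\mu_W .
\]
The first term is $\le0$. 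The second term lives on $D\setminus A$ and is controlled using $v\le1$. Rather than a cutoff, it is cleaner to integrate over the region where $\xi$ is nontrivial and bound the boundary flux through $\mu_W(D\setminus A)$. This is where the polynomial prefactor $1+t/r^2+r^2/t$ comes from: we optimize over the auxiliary parameter $\tau$ in $\xi$ (taking $\tau-t\sim t$ or $\sim r^2$) and over the annulus width.

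Next we integrate from $0$ to $t$. The initial term vanishes because $v(0)=0$. On $A$ the weight is $e^{\xi(t,\cdot)}$ with $\xi$ chosen so that $\xi(t,y)\ge r^2/t$ for $y\in A$. We arrange this by using $\xi=\frac{(r-d_W(y,A))_+^2}{s-\cdots}$ with sign conventions reversed, i.e. the weight is large on $A$ and zero near $D^c$. The flux term is then bounded by $\sup e^{\xi}\le$ a modest factor on the annulus times $\mu_W(D\setminus A)$, giving $\int_A v^2\dd\mu_W\le e^{-r^2/t}\cdot C(1+\cdots)\mu_W(D\setminus A)$.

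The main obstacle is the boundary flux term. Since $v=1$ near $D^c$ is not small, we must exploit that the weight vanishes there while $v$ stays bounded by $1$. We must also handle the time singularity of $\xi$ at $s=t$ without losing the sharp constant in the exponent. We would try Grigor'yan's choice $\xi(s,y)=\frac{(r-d_W(y,A))_+^2}{2(t+\sigma-s)}$ with a small shift $\sigma$, then optimize $\sigma$. The resulting loss is exactly the polynomial factor $1+t/r^2+r^2/t$.
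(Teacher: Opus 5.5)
\paragraph{The weight function.} Your overall scheme is the paper's: test with $\eta^2ve^{\xi}$ after time mollification, use $v(0,\cdot)=0$, control the flux on $D\setminus A$ by $v\le1$, and get the Hamilton--Jacobi inequality from $2e^{a-\rho}|\partial_y d|^2\le1$. But the proof depends on the choice of $\xi$, and that is where your proposal breaks.

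The weight you finally commit to, $\xi(s,y)=\frac{(r-d_W(y,A))_+^2}{2(t+\sigma-s)}$, is increasing in $s$. Indeed $\partial_s\xi=\frac{(r-d_W(y,A))_+^2}{2(t+\sigma-s)^2}>0$ wherever $d_W(y,A)<r$, in particular on $A$. So the term $\int\psi^2e^{\xi}v^2(\partial_s\xi+e^{a-\rho}|\partial_y\xi|^2)\,\dd\mu_W$ is positive, not $\le0$, and the energy inequality yields nothing.

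Your other explicit choice, $\xi=\frac{\rho_A^2}{2(s-\tau)}$ with $\tau>t$, is decreasing in $s$ but points the wrong way in space. It gives $e^{\xi}=e^{-r^2/(2(\tau-s))}$ on $A$ and $e^{\xi}\approx1$ near $D^c$, so the weight is small exactly where you need a gain and largest where the flux enters; no decay results. Only your first formula (nonpositive, vanishing on $A$, decreasing in $s$) has a workable Grigor'yan structure, but you then ask for $\xi$ to vanish near $\partial D$ instead.

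What is needed is a weight that is nonincreasing in time and, at time $t$, large on $A$ compared with its size on the cutoff annulus. The paper takes $\xi(s,y)=d_D(y)^2/(s+\alpha t)$, with $d_D$ the intrinsic distance to $D^c$, and $\eta=\chi_\alpha(d_D/r)$. Your $(r-d_W(\cdot,A))_+$ would also work as the numerator. Then $\xi(t,\cdot)\ge r^2/((1+\alpha)t)$ on $A$, while $\xi\le\alpha r^2/t$ on $\operatorname{supp}\eta'\subset\{d_D\le\alpha r\}\subset D\setminus A$.

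\paragraph{The constant in the exponent.} Your differential inequality carries $\partial_s\xi+e^{a-\rho}|\partial_y\xi|^2$, with coefficient $1$; this is what you get after absorbing the cross term $\eta\eta'\partial_y\xi\,v^2$ by Young's inequality. It forces the factor $\frac12$ in your $\xi$. With that normalization the largest admissible gain on $A$ relative to the flux region is $e^{r^2/(2t)}$, so you can only reach $\int_Av(t)^2\,\dd\mu_W\lesssim e^{-r^2/(2t)}$. After the square root in the proof of Theorem~\ref{thm:local-exit}, this gives $a_\theta\to1/4$ instead of $1/2$, i.e.\ the wrong rate function.

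The paper needs only $\partial_s\xi+\frac12e^{a-\rho}(\partial_y\xi)^2\le0$, because it does not absorb the cross term. On $\operatorname{supp}\eta'$ one has $2e^{a-\rho}|\eta\eta'\partial_y\xi|\le C/(s+\alpha t)$, since the $1/\alpha$ from $\eta'$ cancels against $d_D\le\alpha r$. Its integral over $[0,t]$ is $C\log(1+\alpha^{-1})$. Separately, $2e^{a-\rho}(\eta')^2\le C/(\alpha^2r^2)$ contributes $Ct/(\alpha^2r^2)$. Choosing $\alpha=1/4$ if $r^2/t\le4$ and $\alpha=t/r^2$ otherwise bounds both losses $e^{\alpha r^2/t}$ and $e^{r^2/t-r^2/((1+\alpha)t)}$ by absolute constants. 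This produces exactly the prefactor $1+t/r^2+r^2/t$.

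Your ``optimize over $\tau$ and the annulus width'' never specifies this step and, with your weights, cannot deliver it. Alternatively, a Young parameter $\epsilon\sim t/r^2$ instead of $1$ would also recover the sharp exponent, at the cost of a worse polynomial factor.
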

	
	\begin{proof}
		Similar to the argument in Lemma \ref{lem:caccioppoli}, without loss of generality, we may assume $v$ is smooth in time. Let $\eta\in C_c^1(D)$ and let $\xi=\xi(s,y)$ satisfy
		\begin{equation}\label{eq:xi-condition}
			\partial_s \xi+\frac{1}{2}e^{a-\rho}(\partial_y \xi)^2\le0.
		\end{equation}
		Set
		\[
		F(s)=\int_D\eta^2v(s,y)^2e^{\xi(s,y)}\,\dd \mu_W(y).
		\]
		Since $v$ solves the heat equation in the weak form
		\begin{equation}\label{413}
			\int_D \partial_s v\psi \dd \mu_W+\int_D e^{a-\rho}\partial_y  v \partial_y  \psi \dd \mu_W=0,
		\end{equation}
		applying \eqref{413} with $\psi:=\eta^2ve^\xi$, we get
		\begin{align*} 
			F'(s)
			&=-2\int_De^{a}\partial_y v \partial_y (\eta^2ve^\xi)\,\dd y
			+\int_De^{a-\rho}\eta^2v^2e^\xi\partial_s \xi\,\dd \mu_W\\
			&=-2\int_De^{a}\eta^2e^\xi (\partial_y v)^2\,\dd y
			-4\int_De^{a}\eta\eta'e^\xi v \partial_y  v\,\dd y\\
			&\quad -2\int_De^{a}\eta^2e^\xi \partial_y \xi v \partial_y v\,\dd y
			+\int_D\eta^2v^2e^\xi\partial_s \xi\,\dd \mu_W \\
			&\le\int_De^\xi v^2\left[
			2e^{a-\rho}(\eta')^2+
			2e^{a-\rho}|\eta\eta' \partial_y \xi|
			+\eta^2\left(\partial_s \xi+\frac{1}{2}e^{a-\rho}(\partial_y \xi)^2\right)
			\right]\dd \mu_W \\
			&\le\int_{\operatorname{supp}\eta'}e^\xi v^2\left[
			2e^{a-\rho}(\eta')^2+
			2e^{a-\rho}|\eta\eta' \partial_y \xi|
			\right]\dd \mu_W,\nonumber
		\end{align*}
		where the last inequality follows from \eqref{eq:xi-condition}. Integrating from $0$ to $t$, using $v(0,\cdot)=0$ in the interior and $0\le v \le 1$, we obtain
		\begin{equation}\label{eq:F-bound-smooth}
			F(t)\le
			\int_0^t\int_{\operatorname{supp}\eta'}e^\xi \left[
			2e^{a-\rho}(\eta')^2+
			2e^{a-\rho}|\eta\eta'\xi_y|
			\right]\mu_W(\dd y)\dd s.
		\end{equation}
		
		Now choose the particular cutoffs. Fix a constant $\alpha\in(0,1/4]$ and a Lipschitz function $\chi_\alpha:[0,\infty)\to[0,1]$
		with
		\[
		\chi_\alpha(x)=0\ (x\le\alpha/2),\qquad
		\chi_\alpha(x)=1\ (x\ge\alpha),\qquad
		|\chi_\alpha'|\le3/\alpha,
		\]
		and put
		\[
		d_D(y):=\inf_{z\notin D}d_W(y,z), \qquad
		\eta(y):=\chi_\alpha\left(\frac{d_D(y)}r\right),
		\qquad
		\xi(s,y):=\frac{d_D(y)^2}{s+\alpha t}.
		\]
		Since
		\begin{equation*}
			2e^{a(y)-\rho(y)}|d_D'(y)|^2\le1
			\quad\text{for a.e. }y\in D,
		\end{equation*}
		condition \eqref{eq:xi-condition} holds. On $\operatorname{supp}\eta'$, one has $d_D\le\alpha r$ and hence
		\[
		e^\xi\le e^{\alpha r^2/t},\qquad
		2e^{a-\rho}(\eta')^2\le\frac{C}{\alpha^2r^2},\qquad
		2e^{a-\rho}|\eta\eta'\xi_y|\le\frac{C}{s+\alpha t}.
		\]
		Substituting this into \eqref{eq:F-bound-smooth}, we get
		\begin{equation}\label{eq:Fupper}
			F(t)\le C\mu_W(D\setminus A)e^{\alpha r^2/t}
			\left(\frac{t}{\alpha^2r^2}+\log(1+\alpha^{-1})\right).
		\end{equation}
		On the other hand, $\eta=1$ and $d_D\ge r$ on $A$, so
		\begin{equation}\label{eq:Flower}
			F(t)\ge\exp\left\{\frac{r^2}{(1+\alpha)t}\right\}
			\int_Av(t)^2\,\dd \mu_W.
		\end{equation}
		Combining \eqref{eq:Fupper} and \eqref{eq:Flower} yields
		\begin{equation}\label{eq:L2-exit-alpha-detailed}
			\int_A v(t)^2\dd \mu_W
			\le C\mu_W(D\setminus A)
			\left(\frac{t}{\alpha^2 r^2}+\log(1+\alpha^{-1})\right)
			\exp\left\{ -\frac{r^2}{t+\alpha t}+\alpha \frac{r^2}{t}\right\}.
		\end{equation}
		If $0<\frac{r^2}{t}\le4$, take $\alpha=1/4$. If $\frac{r^2}{t}>4$, take $\alpha=\frac{t}{r^2}$. Because $-\frac{z}{1+z^{-1}} \le -z+1$, in both cases the right-hand side of \eqref{eq:L2-exit-alpha-detailed} is bounded by
		\[
		C\mu_W(D\setminus A)(1+\frac{r^2}{t}+\frac{r^2}{t}^{-1})e^{-\frac{r^2}{t}}.
		\]
		This proves \eqref{eq:L2exit}.
	\end{proof}

	We now combine the local mean estimate and the weighted $L^2$ exit estimate to prove Theorem~\ref{thm:local-exit}.
	
	\begin{proof}[{\bf Proof of Theorem~\ref{thm:local-exit}}]
		Fix $\theta\in(0,1)$ and choose $\beta\in(0,\theta)$. Put
		\[
			D:=D_W(x,r),\qquad A_\beta:=D_W(x,\beta r),
			\qquad R_\beta:=(1-\beta)r,
		\]
		and let $v(s,y):=P_y^W(\tau_D\le s)$. Set $z:=r^2/t$.
		If $M(D)=\infty$, the desired estimate is trivial. Hence assume $M(D)<\infty$; then $D$ is bounded, and all estimates below may be applied with $M_R$ replaced by $M(D)$.

		If $0<z\le64\beta^{-2}$, the result follows from $v\le1$ after increasing the constant. Assume $z>64\beta^{-2}$ and set
		\[
			\ell:=\frac\beta{16}\sqrt t.
		\]
		The intrinsic cylinder centered at $(t,x)$ with spatial radius $\ell$ is contained in
		$(0,(1+\beta)t)\times A_\beta$. Lemma~\ref{lem:mean1}, followed by enlarging the cylinder, gives
		\[
			v(t,x)^2
			\le\frac{Ce^{CM(D)}}{\ell^2\mu_W(D_W(x,\ell))}
			\int_{t/2}^{(1+\beta)t}\!\int_{A_\beta}v(s,y)^2\,\dd\mu_W(y)\,\dd s.
		\]
		By the volume-ratio estimate,
		\[
			\frac1{\ell^2\mu_W(D_W(x,\ell))}
			\le\frac{Ce^{CM(D)}\beta r}{\ell^3\mu_W(A_\beta)}.
		\]
		Since $\ell=(\beta/16)\sqrt t$ and the time interval has length at most $Ct$,
		\[
			\frac{\beta r\,t}{\ell^3}
			=C_\beta\frac r{\sqrt t}=C_\beta\sqrt z.
		\]
		Using the monotonicity of $s\mapsto v(s,y)$, we obtain
		\begin{equation}\label{eq:combine1}
			v(t,x)^2
			\le\frac{C_\beta e^{CM(D)}\sqrt z}{\mu_W(A_\beta)}
			\int_{A_\beta}v((1+\beta)t,y)^2\,\dd\mu_W(y).
		\end{equation}

		The intrinsic distance from $A_\beta$ to $D^c$ is $R_\beta$. Lemma~\ref{lem:L2exit}, applied at time $(1+\beta)t$, and the volume-ratio estimate yield
		\begin{align*}
			\frac1{\mu_W(A_\beta)}
			\int_{A_\beta}v((1+\beta)t,y)^2\,\dd\mu_W(y)
			&\le C_\beta e^{CM(D)}
			\left(1+z+z^{-1}\right)\\
			&\quad\times
			\exp\left\{-\frac{(1-\beta)^2}{1+\beta}z\right\}.
		\end{align*}
		Combining this with \eqref{eq:combine1} and taking square roots gives
		\[
			v(t,x)
			\le C_\beta e^{CM(D)}
			z^{1/4}(1+z+z^{-1})^{1/2}
			\exp\{-a_\beta z\},
			\qquad
			a_\beta:=\frac{(1-\beta)^2}{2(1+\beta)}.
		\]
		Because $a_\beta>a_\theta$ and $z>1$, the polynomial factor is absorbed by the exponential gap:
		\[
			z^{1/4}(1+z+z^{-1})^{1/2}e^{-a_\beta z}
			\le C_{\beta,\theta}e^{-a_\theta z}.
		\]
		Thus
		\[
			P_x^W(\tau_{D_W(x,r)}\le t)
			\le C_\theta e^{C_\theta M(D_W(x,r))}
			\exp\left\{-a_\theta\frac{r^2}{t}\right\}.
		\]
		Since $t^{-N}\ge1$ for $0<t<1$, this implies \eqref{eq:local-exit} (and in fact proves the stronger estimate without the factor $t^{-N}$).
	\end{proof}
	
	As a corollary of Theorem \ref{thm:local-exit}, we immediately have the following.
	
	\begin{corollary}[Euclidean exit interval]\label{cor:euclidean}
		Set
		\begin{align*}
		r_W(x,R):=\min\Biggl\{&
		\int_{x-R}^{x}\sqrt{\frac{\exp\{\rho(z,W)-a(z,W)\}}{2}}\,\dd z,\\
		&\int_x^{x+R}\sqrt{\frac{\exp\{\rho(z,W)-a(z,W)\}}{2}}\,\dd z
		\Biggr\}.
		\end{align*}
		There exist positive constants $C,N$ such that, for every $x\in\R$, $R>0$, and $0<t\le1$,
		\begin{equation*}
			P_x^W\left(\tau_{(x-R,x+R)}\le t\right)
			\le Ct^{-N}e^{CM((x-R,x+R))}
			\exp\left\{-a_\theta\frac{r_W(x,R)^2}{t}\right\}.
		\end{equation*}
	\end{corollary}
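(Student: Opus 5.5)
The plan is to deduce the corollary directly from Theorem~\ref{thm:local-exit} by showing that the Euclidean interval $(x-R,x+R)$ contains an intrinsic ball centred at $x$ of radius $r_W(x,R)$.

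First, set $r:=r_W(x,R)$. If $r=0$ the bound is trivial, because $\exp\{-a_\theta r^2/t\}=1$. We may then take $C\ge 1$ and $N\ge 0$ so that the right-hand side is at least $1$. So assume $r>0$; this is automatic from the continuity and positivity of $q_W$.

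Next I will prove the inclusion $D_W(x,r)\subset(x-R,x+R)$. Take $y\ge x+R$. Since $q_W>0$,
\[
d_W(x,y)=\int_x^y q_W(z)\,\dd z\ge\int_x^{x+R}q_W(z)\,\dd z\ge r,
\]
so $y\notin D_W(x,r)$. The case $y\le x-R$ is symmetric and uses the left integral. Hence
\[
D_W(x,r)\subset(x-R,x+R).
\]
Path continuity then gives $\tau_{D_W(x,r)}\le\tau_{(x-R,x+R)}$. Therefore
\[
P_x^W\bigl(\tau_{(x-R,x+R)}\le t\bigr)\le P_x^W\bigl(\tau_{D_W(x,r)}\le t\bigr).
\]

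Finally I will apply \eqref{eq:local-exit} with this $r$, for $0<t<1$. The inclusion gives $M(D_W(x,r))\le M((x-R,x+R))$, because $M(K)$ is monotone in $K$. The prefactor $\exp(CM(\cdot))$ can therefore be enlarged accordingly.

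The endpoint $t=1$ is not covered by the theorem as stated. I expect to handle it in one of two ways. One option is to use the stronger bound without the factor $t^{-N}$ that the proof of Theorem~\ref{thm:local-exit} actually establishes, which holds for all $t>0$. The other is to pass to the limit $t\uparrow1$, since $P_x^W(\tau\le t)$ is right-continuous and monotone in $t$, and $\{\tau\le 1\}=\bigcap_{s>1}\{\tau\le s\}$. In fact monotonicity alone suffices here: apply the estimate at times $s\downarrow1$, using the version valid for $s\ge1$ from the proof.

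The constants $C,N$ depend only on $\theta$, which is implicit in the corollary. There is no real obstacle. The only points needing care are the inclusion of the intrinsic ball in the Euclidean interval and the boundary case $t=1$.
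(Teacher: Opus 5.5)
Your argument is correct and matches the paper, which treats the corollary as immediate from Theorem~\ref{thm:local-exit} via the inclusion $D_W(x,r_W(x,R))\subset(x-R,x+R)$ and the monotonicity of $M$; that inclusion already implies $\tau_{D_W(x,r)}\le\tau_{(x-R,x+R)}$, so path continuity is not needed. For $t=1$, your first option is the clean one, because the theorem's proof never uses $t<1$ except to get $t^{-N}\ge1$; the limit $t\uparrow1$ would instead need left-continuity, i.e.\ $P_x^W(\tau=1)=0$ (which follows from the continuity of $v$ in Lemma~\ref{lem:exit-caloric}), not right-continuity.
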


	\begin{corollary}[First hitting time]\label{Hitting}
		For every $\theta\in(0,1)$, there exist positive constants
		$C=C(\theta)$ and $N=N(\theta)$ such that, for every $R>0$,
		$x,y\in[-R,R]$, and $0<t\le1$, we have
		\begin{equation*}
			P_x^W(T_y \le t; \sup_{0\le s\le t}|X^W(s)| \le R)
			\le Ct^{-N}e^{CM_{R}}
			\exp\left\{-a_\theta\frac{d_W(x,y)^2}{t}\right\},
		\end{equation*}
		where $T_y:=\inf\{t\ge0:X^W(t)=y\}$.
	\end{corollary}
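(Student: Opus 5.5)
The statement to prove is Corollary~\ref{Hitting}. The plan is to reduce the hitting event to an exit event from an intrinsic ball and then invoke Theorem~\ref{thm:local-exit}. Fix $\theta$, $R$, $x,y\in[-R,R]$ and $t\in(0,1]$, and set $r:=d_W(x,y)$. If $r=0$ then $x=y$, since $q_W>0$ by continuity of $a,\rho$, and the bound is trivial after enlarging $C$. So assume $r>0$.

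First, the event inclusion. By continuity of paths, on $\{T_y\le t\}$ the process started at $x$ reaches $y$ before time $t$. The point $y$ satisfies $d_W(x,y)=r$, so $y\notin D_W(x,r)$. Hence the process has left $D_W(x,r)$ by time $T_y$, which gives
\[
\{T_y\le t\}\subset\{\tau_{D_W(x,r)}\le t\}.
\]
Since $D_W(x,r)$ is an interval containing $x$, we could even use a one-sided version, but this is not needed.

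Second, the localisation to $[-R,R]$, which is where the constant $M(D_W(x,r))$ must be replaced by $M_R$. The ball $D_W(x,r)$ need not lie inside $[-R,R]$, because on the side away from $y$ it may extend beyond $\pm R$. To handle this, replace $D_W(x,r)$ by the smaller interval $D':=D_W(x,r)\cap(-R,R)$. On the event $\{T_y\le t\}\cap\{\sup_{s\le t}|X^W_s|\le R\}$, the path exits $D'$ by time $t$, since it hits $y\notin D'$. It is harmless that $D'$ may touch $\pm R$: if the path exits through $\pm R$, this still counts as exiting $D'$.

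Third, rerun the proof of Theorem~\ref{thm:local-exit} with $D'$ in place of $D$. That proof uses only three ingredients:
\begin{itemize}
\item the intrinsic distance from the inner set $A_\beta$ to the complement of the domain, which enters Lemma~\ref{lem:L2exit};
\item the local mean-value estimate at $(t,x)$, Lemma~\ref{lem:mean1};
\item volume bounds controlled by $M$ of the domain, which here is at most $M_R$.
\end{itemize}
The difficulty is that the intrinsic distance from $x$ to $\partial D'$ may be smaller than $r$ on the side where the cut at $\pm R$ occurs. So the $L^2$ exit estimate in its present form does not directly give the exponent $r^2/t$.

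This third step is the main obstacle. The first attempt will be a one-sided version of Lemma~\ref{lem:L2exit}: use a weight $\xi=d_+(y)^2/(s+\alpha t)$, where $d_+$ is the intrinsic distance to the endpoint on the side of $y$. At the cut boundary $\pm R$, the cut-off $\eta$ would be taken equal to $1$ there. The boundary term at that endpoint is not available in the weak formulation, though.

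A cleaner alternative is to apply Theorem~\ref{thm:local-exit} on the reflected or stopped process, or simply to note that when $D_W(x,r)\not\subset[-R,R]$ one can restrict attention to a one-sided exit. For the one-sided exit, observe that $v(s,z)=P_z(T_y\le s,\ \text{path stays in }[-R,R])$ is dominated by the exit probability from the interval $(-R,y)$ (say $y>x$). This quantity is a weak subsolution vanishing initially. Its effective distance to the relevant boundary $\{y\}$ is $r$, while on $-R$ it is zero.

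Lemma~\ref{lem:L2exit}'s computation works with $\eta$ depending only on $d_W(\cdot,y)$ and $\eta$ compactly supported near the boundary $y$ only, provided we use Dirichlet data $v=0$ at $-R$. That is exactly the killing at $-R$. All constants then involve only $M_R$, and the Moser and $L^2$ steps go through verbatim, yielding the claimed bound with $a_\theta$. As in the theorem, $t^{-N}\ge1$ absorbs the form of the constant.
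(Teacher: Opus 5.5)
\textbf{There is a genuine gap in Step 3.} Your Step 1 inclusion $\{T_y\le t\}\subset\{\tau_{D_W(x,r)}\le t\}$ is exactly what the paper uses. You are also right that the difficulty is that $M(D_W(x,r))$ need not be bounded by $M_R$.

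For the one-sided argument the right quantity is $u(s,z)=P_z^W(T_y\le s,\ T_y<T_{-R})$, i.e.\ the process killed at $-R$. It is not ``the exit probability from $(-R,y)$'', which includes exits through $-R$ and has no decay in $r$.

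For this $u$ the $L^2$ step does adapt. Since $u(s,-R)=0$, the test function $\eta^2ue^{\xi}$ lies in $H_0^1$ even when $\eta$ vanishes only near $y$.

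The Moser step does \emph{not} go through verbatim. Lemma~\ref{lem:mean1} needs $u$ to be a subsolution on the whole intrinsic cylinder with spatial section $D_W(x,\ell)$, where $\ell=\beta\sqrt t/16$. Since $x\in[-R,R]$ is arbitrary, this ball can cross $-R$, where $u$ is not defined. To repair this you would need one of two things:
\begin{itemize}
\item a boundary Moser iteration on truncated cylinders, using the Dirichlet condition at $-R$ both in Lemma~\ref{lem:caccioppoli} and in the Sobolev inequality; or
\item a zero extension of $u$ across $-R$. But then the constants involve $a,\rho$ on $D_W(x,\ell)\setminus[-R,R]$ and are no longer controlled by $M_R$.
\end{itemize}
You would also need the weak-solution property for mixed boundary data ($0$ at $-R$, $1$ at $y$), which Lemma~\ref{lem:exit-caloric} does not cover. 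It follows from writing $u=h-P_s^{D}h$, with $h$ the normalized scale function on $D=(-R,y)$, but this has to be said.

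\textbf{The missing idea is to localize the environment, which makes all of this unnecessary.} The event $\{T_y\le t,\ \sup_{s\le t}|X^W(s)|\le R\}$ depends only on the path before it leaves $[-R,R]$. The paper therefore replaces $a,\rho$ by $a(-R\vee\cdot\wedge R)$ and $\rho(-R\vee\cdot\wedge R)$. This changes nothing relevant:
\begin{itemize}
\item In the It\^o--McKean construction with the same Brownian motion, the scale function and the time change are unchanged while the path stays in $[-R,R]$. So the two diffusions agree pathwise up to the exit from $[-R,R]$, and the probability of the event is unchanged.
\item $d_W(x,y)$ is unchanged, because $x,y\in[-R,R]$.
\item The frozen coefficients still satisfy $H_1$.
\item For the modified environment, $M(D(x,r))\le M_R$ for every intrinsic ball, however far it extends beyond $[-R,R]$.
\end{itemize}
Your Step 1 inclusion, followed by Theorem~\ref{thm:local-exit} for the modified environment, then gives the corollary in two lines. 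Your remark about a ``stopped process'' was close. The right object is not the stopped process, which the theorem does not cover, but this modified diffusion, which it does.
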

	
	\begin{proof}
		The assertion is immediate when $x=y$, so assume $x\ne y$.
		By the local property of the diffusion $X^W$, without loss of generality, assume
		\begin{equation*}
			a(x):=a(-R \vee x \wedge R), \ \  \rho(x):=\rho(-R \vee x \wedge R).
		\end{equation*}
		By Theorem \ref{thm:local-exit}, for any $r>0$ and $t \le 1$,
		\begin{equation*}
			P_x^W(\tau_{D_W(x,r)}\le t)
			\le Ct^{-N}\exp\left( C\bigl(\sup_{|z|\le R}|a(z,W)|+\sup_{|z|\le R}|\rho(z,W)|\bigr) -a_\theta\frac{r^2}{t}\right).
		\end{equation*}
		Taking $r:=d_W(x,y)$, we get
		\[
		P_x^W(T_y \le t; \sup_{0\le s\le t}|X^W(s)| \le R) \le P_x^W(\tau_{D_W(x,r)}\le t)
		\le Ct^{-N}e^{CM_R}
		\exp\left\{-a_\theta\frac{d_W(x,y)^2}{t}\right\}.
		\]
	\end{proof}

	Based on Theorem \ref{thm:local-exit}, we obtain Proposition \ref{prop:product}.
	\begin{proof}[Proof of Proposition \ref{prop:product}]
		Fix $\theta\in(0,1)$. For all sufficiently small $\varepsilon$, Theorem \ref{thm:local-exit} and the local property of $X^W$ give that, for any $|x|\le R$,
		\[
		P_x^W(\tau_{D_W(x,r_i)}\le\varepsilon h_i; \sup_{t \le \eps h_i} |X^W(t)| \le R)
		\le C(\varepsilon h_i)^{-N}e^{CM_{R}}
		\exp\left\{-a_\theta\frac{r_i^2}{\varepsilon h_i}\right\}.
		\]
		Multiplying over $i$ and taking expectation in the environment yields
		\begin{align*}
			&\quad\E\left[\prod_{i=1}^n\sup_{|x|\le R}P_x^W(\tau_{D_W(x,r_i)}\le\varepsilon h_i; \sup_{t \le \eps h_i} |X^W(t)| \le R)\right]\\
			&\le C^n\varepsilon^{-nN}\left(\prod_{i=1}^nh_i^{-N}\right)
			\E e^{nCM_{R}}
			\exp\left\{-\frac{a_\theta}{\varepsilon}\sum_{i=1}^n\frac{r_i^2}{h_i}\right\}.
		\end{align*}
		By assumption $H_2$, the expectation is finite. Multiplying by $\varepsilon$, letting $\varepsilon\downarrow0$, and finally letting $\theta\downarrow0$ proves \eqref{eq:product-bound}.
	\end{proof}

	\section{Annealed upper bound}\label{sec:annealed-upper}
	In this section, we prove the annealed upper bound via the finite-dimensional estimates and exponential tightness in the same spirit as the method in \cite{DemboZeitouni2010}, taking extra care of the random environments. Throughout this section, we assume $H_{1}$--$H_{4}$.

	\subsection{Exponential tightness}\label{sec:exponential-tightness}
	Based on the local exit estimate of Theorem~\ref{thm:local-exit}, we prove exponential tightness of the annealed processes. First, we show that the annealed processes are exponentially bounded.
	
	\begin{lemma}\label{lem:compact-containment}
		One has
		\begin{equation}\label{eq:compact-containment}
			\lim_{R\to\infty}\limsup_{\varepsilon\downarrow0}\varepsilon
			\log P_0^{\ann}\left(\sup_{0\le t\le1}|X_{\varepsilon}(t)|>R\right)=-\infty.
		\end{equation}
	\end{lemma}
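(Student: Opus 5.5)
The plan is to reduce the event to an exit from a Euclidean interval and then apply Corollary~\ref{cor:euclidean}. We control the environment-dependent constant $e^{CM}$ with $H_2$ and the intrinsic radius uniformly in $W$ with $H_3$. Since $X_\varepsilon(t)=X^W(\varepsilon t)$, we have
\[
\Bigl\{\sup_{0\le t\le1}|X_\varepsilon(t)|>R\Bigr\}\subset\{\tau_{(-R,R)}\le\varepsilon\},
\]
with $X^W$ started at $0$. Hence for each fixed $W$ and $\varepsilon<1$, Corollary~\ref{cor:euclidean} with $x=0$ gives
\[
P_0^W\Bigl(\sup_{0\le t\le1}|X_\varepsilon(t)|>R\Bigr)\le C\varepsilon^{-N}e^{CM_R}\exp\Bigl\{-a_\theta\frac{r_W(0,R)^2}{\varepsilon}\Bigr\}.
\]
Here $M_R=M((-R,R))$ and $r_W(0,R)=\min\{\Lambda_W(R),-\Lambda_W(-R)\}$.

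The next step is to make the exponent deterministic. Set
\[
m_R:=\operatorname*{ess\,inf}_W\min\bigl\{\Lambda_W(R),-\Lambda_W(-R)\bigr\}.
\]
This is at least $\min\{m_R^+,m_R^-\}$ from the proof of Proposition~\ref{prop:good-rate}. More directly, since $\sqrt{e^{\rho-a}/2}=e^{(\rho-a)/2}/\sqrt2$, $H_3$ gives $m_R\to\infty$ as $R\to\infty$. For $\Pbb$-a.e.\ $W$ we then have $r_W(0,R)\ge m_R$, so integrating over the environment yields
\[
P_0^{\ann}\Bigl(\sup_{0\le t\le1}|X_\varepsilon(t)|>R\Bigr)\le C\varepsilon^{-N}\,\E\bigl[e^{CM_R}\bigr]\,e^{-a_\theta m_R^2/\varepsilon}.
\]
By $H_2$ (with $c=C$ and $K=R$), $\E[e^{CM_R}]<\infty$ for each fixed $R$. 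Taking $\varepsilon\log$, the term $\varepsilon\log(C\varepsilon^{-N}\E e^{CM_R})\to0$, so the $\limsup$ as $\varepsilon\downarrow0$ is at most $-a_\theta m_R^2$. Letting $R\to\infty$ and using $m_R\to\infty$ gives $-\infty$.

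The only delicate point is the order of quantifiers: the finite constant $\E[e^{CM_R}]$ depends on $R$, but we send $\varepsilon\to0$ first with $R$ fixed, so it disappears. It is also essential that the lower bound on the intrinsic radius be uniform in $W$ (an essential infimum), which is exactly what $H_3$ supplies. The estimate $r_W(0,R)\ge m_R$ holds almost surely, so any exceptional set is $\Pbb$-null and does not affect the annealed integral. Finally, the corollary's $t^{-N}$ factor requires $t=\varepsilon\le1$, which holds for small $\varepsilon$.
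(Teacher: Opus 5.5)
Your proposal is correct and follows the paper's proof: Corollary~\ref{cor:euclidean} at $x=0$, then the essential-infimum lower bound on $r_W(0,R)$ to pull the exponential out of the environment average, then $H_2$ to make $\E[e^{CM_R}]$ finite. As in the paper, you send $\varepsilon\downarrow0$ first with $R$ fixed, and only then let $R\to\infty$ using $H_3$.
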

	
	\begin{proof}
		For fixed $R$, Corollary~\ref{cor:euclidean} gives
		\begin{align*}
			P_0^{\ann}\left(\sup_{0\le t\le1}|X_\varepsilon(t)|>R\right)
			&\le C\varepsilon^{-N}\E\left[e^{CM_R}
			\exp\left\{-a_\theta\frac{r_W(0,R)^2}{\varepsilon}\right\}\right]\\
			&\le C\varepsilon^{-N}\E[e^{CM_R}]
			\exp\left\{-\frac{a_\theta}{\varepsilon}
			\operatorname{ess\,inf}_{W}r_W(0,R)^2\right\}.
		\end{align*}
		Assumption $H_2$ makes the expectation finite. Hence
		\[
			\limsup_{\varepsilon\downarrow0}\varepsilon\log
			P_0^{\ann}\left(\sup_{0\le t\le1}|X_\varepsilon(t)|>R\right)
			\le-a_\theta\operatorname{ess\,inf}_{W}r_W(0,R)^2.
		\]
		Letting $R\to\infty$ and using $H_3$ proves \eqref{eq:compact-containment}.
	\end{proof}
	\begin{proposition}[Exponential tightness]\label{prop:exp-tight}
	Assume $H_1$--$H_4$. For every $L\ge1$, there exists a deterministic
	compact set $K_L\subset C_0([0,1];\R)$ such that
	\[
		\limsup_{\varepsilon\downarrow0}\varepsilon\log
		P_0^{\ann}(X_\varepsilon\notin K_L)\le-L.
	\]
\end{proposition}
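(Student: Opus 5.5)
The plan is the standard Arzelà--Ascoli construction, with the modulus of continuity measured in the intrinsic metric and controlled by the first-exit estimates. For $R>0$ and a sequence $\delta_k\downarrow0$, consider sets of the form
\[
K=\Bigl\{f\in C_0([0,1]):\|f\|_\infty\le R,\ \sup_{|t-s|\le\delta_k}|f(t)-f(s)|\le\gamma_k\ \forall k\ge1\Bigr\},
\]
with $\gamma_k\downarrow0$. Such a set is compact by Arzelà--Ascoli. Given $L$, I first use Lemma~\ref{lem:compact-containment} to fix $R=R_L$ so that $\limsup\varepsilon\log P_0^{\ann}(\|X_\varepsilon\|_\infty>R)\le-2L$. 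It then remains to control the oscillation events on $\{\|X_\varepsilon\|_\infty\le R\}$.

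Fix $\gamma>0$ and $\delta=1/m$. If $\|X_\varepsilon\|_\infty\le R$ and the $\delta$-oscillation exceeds $3\gamma$, then for some $j<m$ the path started at the grid time $j\delta$ leaves the Euclidean interval $(X_\varepsilon(j\delta)-\gamma,X_\varepsilon(j\delta)+\gamma)$ within time $\delta$ while staying in $[-R,R]$. Apply the Markov property at time $\varepsilon j\delta$ under the quenched law, and then Corollary~\ref{cor:euclidean} with the coefficients frozen outside $[-R-1,R+1]$, using the locality argument from the proof of Corollary~\ref{Hitting}. This gives
\[
P_0^W(\text{osc}_\delta>3\gamma,\ \|X_\varepsilon\|_\infty\le R)\le m\,C(\varepsilon\delta)^{-N}e^{CM_{R+1}}\exp\Bigl\{-a_\theta\frac{\rho_R(\gamma)^2}{\varepsilon\delta}\Bigr\},
\]
where $\rho_R(\gamma):=\operatorname*{ess\,inf}_W\inf_{|x|\le R}r_W(x,\gamma)$. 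Averaging over $W$ and invoking $H_2$ removes the factor $e^{CM_{R+1}}$ at exponential scale. We obtain
\[
\limsup_{\varepsilon\downarrow0}\varepsilon\log P_0^{\ann}(\cdot)\le-a_\theta\rho_R(\gamma)^2/\delta .
\]

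The key point, and I expect the main obstacle, is showing $\rho_R(\gamma)>0$ for each $\gamma>0$. This is where $H_4$ enters. The map $\Lambda\mapsto\min_{|x|\le R}\min\{\Lambda(x+\gamma)-\Lambda(x),\Lambda(x)-\Lambda(x-\gamma)\}$ is continuous on $C_{\loc}(\R)$. By strict monotonicity it is positive at each $\Lambda\in\mathscr L$, so by compactness of $\mathscr L$ it has a positive minimum $c_R(\gamma)$. Since $\Lambda_W\in\mathscr L$ almost surely (the support carries full measure), and $r_W(x,\gamma)$ is exactly this increment of $\Lambda_W$, we get $\rho_R(\gamma)\ge c_R(\gamma)>0$. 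I would check carefully that the essential infimum passes correctly through the support, exactly as in the proof of Proposition~\ref{prop:good-rate}.

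Finally, choose $\gamma_k=3/k$ and $\delta_k=1/m_k$ with $m_k$ large enough that $a_\theta c_R(1/k)^2 m_k\ge 2L+k$. Summing over $k$ the bounds for the events $\{\text{osc}_{\delta_k}>\gamma_k\}$ requires uniformity in $\varepsilon$ of the prefactors $m_kC(\varepsilon\delta_k)^{-N}\E e^{CM_{R+1}}$. I would handle this by noting that the bound for each $k$ is at most $\exp\{-(2L+k)/\varepsilon\}$ times polynomial factors. For $\varepsilon$ small, uniformly in $k$, the sum is then dominated by $e^{-2L/\varepsilon}\sum_k m_k^{N+1}e^{-k/\varepsilon}\varepsilon^{-N}$. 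Growth of $m_k$ can be kept polynomially controlled by tightening the choice to $a_\theta c_R(1/k)^2m_k\ge 2L+k+\log m_k$, which is achievable. Together with the $R$-bound, this yields $\limsup\varepsilon\log P_0^{\ann}(X_\varepsilon\notin K_L)\le-L$.
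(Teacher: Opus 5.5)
Your proof is correct, but it takes a different route from the paper's.

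\textbf{The paper's route.} The paper measures the modulus of continuity in the intrinsic coordinates. It applies Theorem~\ref{thm:local-exit} on dyadic blocks with intrinsic radii $r_m=2^{-\alpha m}$ over times $\Delta_m=2^{-m}$. This yields the decay $\exp\{-c2^{m(1-2\alpha)}/\varepsilon\}$, whose rate does not involve $\mathscr L$. It then defines $K_L$ as the set of paths $f$ with $\|f\|_\infty\le R$ for which $\Lambda\circ f$ obeys these dyadic bounds for \emph{some} witness $\Lambda\in\mathscr L$. Here $H_4$ enters only in the compactness proof of $K_L$, through three ingredients: compactness of $\mathscr L$, strict monotonicity of the limit witness, and Lemma~\ref{lem:inverse-stability}.

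\textbf{Your route.} You keep the classical Euclidean Arzel\`a--Ascoli set and move $H_4$ into the probability estimate. Compactness of $\mathscr L$ plus strict monotonicity of its elements give the uniform non-collapse constant $c_R(\gamma)>0$. This is in effect the equicontinuity of $\{\Lambda^{-1}:\Lambda\in\mathscr L\}$ on the relevant ranges. It lets you apply Corollary~\ref{cor:euclidean} directly to Euclidean intervals.

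The price is that your scales $\delta_k$ must depend on $c_R(1/k)$, and you must track the polynomial prefactors $m_k^{N+1}\varepsilon^{-N}$ when summing over $k$. Your choice $a_\theta c_R(1/k)^2m_k\ge 2L+k+\log m_k$ handles this correctly once $\varepsilon\le 1/(N+1)$.

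Two side remarks. First, freezing the coefficients outside $[-R-1,R+1]$ is unnecessary in your setting. For $|x|\le R$ and exit radius $1/k\le 1$, one already has $M((x-1/k,x+1/k))\le M_{R+1}$. Second, only the easy direction of the support argument is needed: $\Lambda_W\in\mathscr L$ almost surely, hence $\inf_{|x|\le R}r_W(x,\gamma)\ge c_R(\gamma)$ almost surely.

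\textbf{What each buys.} Your approach avoids the witness construction and the inverse-stability lemma, and it makes explicit the quantitative role of non-collapse. The paper's approach gives an environment-free exponential rate and concentrates all dependence on $\mathscr L$ in the deterministic compactness step.
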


\begin{proof}
	Fix $L>0$ and $\alpha\in(0,1/2)$. By
	Lemma~\ref{lem:compact-containment}, choose $R$ so large that
	\[
		\limsup_{\varepsilon\downarrow0}\varepsilon\log
		P_0^{\ann}(\|X_\varepsilon\|_\infty>R)\le-L.
	\]
	For $m\ge1$, set $\Delta_m=2^{-m}$ and $r_m=2^{-\alpha m}$. The
	Markov property and Theorem~\ref{thm:local-exit} imply
	\begin{align*}
		&P_0^{\ann}\left(
		\max_{0\le k<2^m}\sup_{0\le u\le\Delta_m}
		d_W(X_\varepsilon(k\Delta_m+u),X_\varepsilon(k\Delta_m))>r_m,
		\ \|X_\varepsilon\|_\infty\le R\right)\\
		&\qquad\le C_1(R)\exp\left\{-\frac{c2^{m(1-2\alpha)}}{\varepsilon}\right\}
	\end{align*}
	for all sufficiently small $\varepsilon$, uniformly in $m\ge1$.
	Choose $m_0$ so large that $c2^{m_0(1-2\alpha)}/2>L$.

	Let $K_L$ be the set of all $f\in C_0([0,1])$ such that
	$\|f\|_\infty\le R$ and for some $\Lambda\in\mathscr L$,
	\[
		\max_{0\le k<2^m}
		\sup_{k2^{-m}\le t\le(k+1)2^{-m}}
		|\Lambda(f(t))-\Lambda(f(k2^{-m}))|
		\le2^{-\alpha m}
	\]
	for every $m\ge m_0$.

	The set $K_L$ is compact. Indeed, let $(f_j)\subset K_L$ and choose
	witnesses $\Lambda_j\in\mathscr L$. Along a subsequence,
	$\Lambda_j\to\Lambda$ locally uniformly. Put
	$h_j:=\Lambda_j\circ f_j$. Compactness of $\mathscr L$ gives a common
	bound for $\|h_j\|_\infty$, and the dyadic estimates give a common
	modulus of continuity. Thus a further subsequence satisfies
	$h_j\to h$ uniformly. By $H_4$, the limit $\Lambda$ is strictly
	increasing, and Lemma~\ref{lem:inverse-stability} gives
	\[
		f_j=\Lambda_j^{-1}\circ h_j
		\longrightarrow\Lambda^{-1}\circ h
		\quad\text{uniformly}.
	\]
	The defining inequalities pass to the limit, so $K_L$ is compact.

	A random variable with values in a Polish space belongs to the support
	of its law almost surely. Hence $\Lambda_W\in\mathscr L$ for
	$\Pbb$-a.e. $W$. On $\{\|X_\varepsilon\|_\infty\le R\}$, absence of
	the preceding large intrinsic increments therefore implies
	$X_\varepsilon\in K_L$. Summing over $m\ge m_0$ yields
	\[
		P_0^{\ann}(X_\varepsilon\notin K_L)
		\le P_0^{\ann}(\|X_\varepsilon\|_\infty>R)
		+C\exp\left\{-\frac{c2^{m_0(1-2\alpha)}}{2\varepsilon}\right\}.
	\]
	The assertion follows.
\end{proof}

	\subsection{Finite-dimensional upper estimates}\label{Finite-dimensional}

For a partition $\pi=\{0=t_0<t_1<\cdots<t_n=1\}$,
$x=(x_1,\ldots,x_n)\in\R^n$, and $x_0=0$, define
\[
	J_{\Lambda,\pi}(x)
	:=\frac12\sum_{i=1}^n
	\frac{|\Lambda(x_i)-\Lambda(x_{i-1})|^2}{t_i-t_{i-1}},
	\qquad \Lambda\in\mathscr L.
\]
For the actual environment, write $J_{W,\pi}:=J_{\Lambda_W,\pi}$.

\begin{proposition}[Finite-dimensional upper bound]\label{prop:fd-upper}
	Assume $H_1$--$H_4$. Let $\pi$ be as above and let
	$F\subset\R^n$ be closed. Then
	\[
		\limsup_{\varepsilon\downarrow0}\varepsilon\log
		P_0^{\ann}\bigl(
		(X_\varepsilon(t_1),\ldots,X_\varepsilon(t_n))\in F
		\bigr)
		\le-\inf_{\substack{x\in F\\ \Lambda\in\mathscr L}}
		J_{\Lambda,\pi}(x).
	\]
\end{proposition}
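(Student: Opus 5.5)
We reduce to compact $F$, cover by small balls, and bound each ball by a product of exit probabilities averaged with Proposition~\ref{prop:product}.

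\textbf{Reduction to a bounded set.} By Lemma~\ref{lem:compact-containment}, fix $R$ large so that
\[
\limsup_{\varepsilon\downarrow0}\varepsilon\log P_0^{\ann}(\|X_\varepsilon\|_\infty>R)\le -L,
\]
where $L$ exceeds the right-hand infimum (or is arbitrary if that infimum is $+\infty$). It then suffices to treat the compact set $F_R:=F\cap[-R,R]^n$ and the event $\{\|X_\varepsilon\|_\infty\le R\}$.

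\textbf{Lower semicontinuity and covering.} Let $\gamma:=\inf_{x\in F,\Lambda\in\mathscr L}J_{\Lambda,\pi}(x)$ and fix $\kappa>0$. The map $(x,\Lambda)\mapsto J_{\Lambda,\pi}(x)$ is continuous on $[-R,R]^n\times\mathscr L$, and $\mathscr L$ is compact by $H_4$. Hence there is $\delta>0$ with the following property: for each $y\in F_R$, all $x$ with $|x-y|<\delta$, and all $\Lambda\in\mathscr L$, the increments satisfy $\frac12\sum_i (|\Lambda(x_i)-\Lambda(x_{i-1})|-2\delta')^2_+/(t_i-t_{i-1})\ge\gamma-\kappa$. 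Here $\delta'$ is a uniform modulus, from equicontinuity of $\mathscr L$ on $[-R,R]$, for $|\Lambda(x_i)-\Lambda(y_i)|$. Cover $F_R$ by finitely many balls $B(y^j,\delta)$. Only a finite union appears, so it suffices to bound each ball separately.

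\textbf{Markov property and averaging.} Fix one ball $B(y,\delta)$ and an environment $W$ with $\Lambda_W\in\mathscr L$, which holds $\Pbb$-a.s. On the event
\[
\{(X_\varepsilon(t_i))_i\in B(y,\delta),\ \|X_\varepsilon\|_\infty\le R\},
\]
the intrinsic increment satisfies
\[
d_W\bigl(X_\varepsilon(t_{i-1}),X_\varepsilon(t_i)\bigr)\ge \rho_i:=\bigl(|\Lambda_W(y_i)-\Lambda_W(y_{i-1})|-2\delta'\bigr)_+ .
\]
Hence the process started at $X^W(\varepsilon t_{i-1})$ exits $D_W(\cdot,\rho_i)$ within time $\varepsilon(t_i-t_{i-1})$ while staying in $[-R,R]$. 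The Markov property of $X^W$ at the times $\varepsilon t_{i-1}$ then gives
\[
P_0^W(\cdots)\le\prod_{i=1}^n\sup_{|x|\le R}P_x^W\Bigl(\tau_{D_W(x,\rho_i)}\le\varepsilon h_i;\ \sup_{s\le\varepsilon h_i}|X_s^W|\le R\Bigr),\qquad h_i=t_i-t_{i-1}.
\]

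The difficulty is that the radii $\rho_i$ depend on $W$, while Proposition~\ref{prop:product} requires deterministic $r_i$. To handle this, partition $\mathscr L$, which is compact, into finitely many pieces $U_k$ of small diameter in $C([-R,R])$. On $\{\Lambda_W\in U_k\}$, replace $\rho_i$ by the deterministic value $r_i^k:=\inf_{\Lambda\in U_k}(|\Lambda(y_i)-\Lambda(y_{i-1})|-2\delta')_+$. Monotonicity of the exit probability in $r$ preserves the inequality. Alternatively, one can apply Theorem~\ref{thm:local-exit} directly, using the bound $e^{CM_R}$ and $\E e^{nCM_R}<\infty$ from $H_2$, exactly as in the proof of Proposition~\ref{prop:product}; this yields the exponent $-a_\theta\sum_i (r_i^k)^2/h_i$ on each piece $U_k$. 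By the choice of $\delta,\delta'$ and a fine partition, $\frac12\sum_i(r_i^k)^2/h_i\ge\gamma-2\kappa$.

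\textbf{Conclusion.} Summing over the finitely many $k$ and $j$, then letting $\theta\downarrow0$, $\kappa\downarrow0$, and $L\to\infty$, gives the claim.

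The main obstacle is this uniformity step: converting the environment-dependent radii into deterministic ones. It rests on the compactness and equicontinuity of $\mathscr L$ from $H_4$.
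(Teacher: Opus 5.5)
Your proposal is correct and follows essentially the same route as the paper: restrict to a bounded set via Lemma~\ref{lem:compact-containment}, cover by small cubes or balls, iterate the Markov property at the times $\varepsilon t_{i-1}$ to get a product of local exit probabilities with intrinsic radii, average over the environment using $H_2$, and control the dependence on $W$ through compactness and equicontinuity of $\mathscr L$. The one difference is that you partition $\mathscr L$ to make the radii deterministic; the paper skips this by applying Theorem~\ref{thm:local-exit} pointwise in $W$ and bounding the exponent $A_j(\Lambda_W)$ below by $\min_{\mathscr L}A_j$, using $\Lambda_W\in\mathscr L$ almost surely, which is your ``alternative'' route without the partition.
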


\begin{proof}
	We first assume that $F$ is compact. Choose $R_0$ such that
	$F\subset[-R_0,R_0]^n$ and fix $R>R_0+1$. Compactness of
	$\mathscr L$ in $C_{\loc}(\R)$ gives equicontinuity on
	$[-R_0-1,R_0+1]$. Hence, for every $\eta>0$, there exists
	$r\in(0,1/2)$ such that
	\[
		\sup_{\Lambda\in\mathscr L}
		\sup_{\substack{x,y\in[-R_0-1,R_0+1]\\ |x-y|\le2r}}
		|\Lambda(x)-\Lambda(y)|\le\frac\eta4.
	\]
	Choose finitely many centers $a^1,\ldots,a^m\in F$ such that the cubes
	\[
		Q^j:=\prod_{i=1}^n[a_i^j-r,a_i^j+r]
	\]
	cover $F$. Put $a_0^j:=0$ and $Q_0^j:=\{0\}$. For
	$\Lambda\in\mathscr L$, define
	\[
		\delta_i^j(\Lambda):=
		\inf\{d_{\Lambda}(u,v):u\in Q_{i-1}^j,\ v\in Q_i^j\},
	\]
	and set
	\[
		A_j(\Lambda):=\sum_{i=1}^n
		\frac{\delta_i^j(\Lambda)^2}{t_i-t_{i-1}}.
	\]
	The map $A_j:\mathscr L\to[0,\infty)$ is continuous.

	On the event $\|X_\varepsilon\|_\infty\le R$, iterating the Markov
	property at the deterministic times $\varepsilon t_{i-1}$ and applying
	the local exit estimate to the nonzero gaps gives
	\begin{align*}
		&P_0^W\bigl(
		X_\varepsilon(t_i)\in Q_i^j,\ 1\le i\le n;
		\ \|X_\varepsilon\|_\infty\le R\bigr)\\
		&\quad\le C_\pi\varepsilon^{-nN}e^{nCM_R}
		\exp\left\{-\frac{a_\theta}{\varepsilon}A_j(\Lambda_W)\right\}.
	\end{align*}
	Averaging, using $H_2$, and using continuity together with the
	definition of topological support, we obtain
	\[
		\limsup_{\varepsilon\downarrow0}\varepsilon\log
		P_0^{\ann}\bigl(
		X_\varepsilon(t_i)\in Q_i^j,\ 1\le i\le n;
		\ \|X_\varepsilon\|_\infty\le R\bigr)
		\le-a_\theta\min_{\Lambda\in\mathscr L}A_j(\Lambda).
	\]

	For $u\in Q_{i-1}^j$ and $v\in Q_i^j$, equicontinuity gives
	\[
		d_{\Lambda}(a_{i-1}^j,u)\le\frac\eta4,
		\qquad
		d_{\Lambda}(v,a_i^j)\le\frac\eta4,
	\]
	uniformly in $\Lambda\in\mathscr L$. Hence
	\[
		\delta_i^j(\Lambda)
		\ge\bigl(d_{\Lambda}(a_{i-1}^j,a_i^j)-\eta/2\bigr)_+.
	\]
	For every $\gamma\in(0,1)$ and $\ell,c\ge0$,
	\[
		(\ell-c)_+^2\ge(1-\gamma)\ell^2
		-\frac{1-\gamma}{\gamma}c^2.
	\]
	It follows that
	\[
		A_j(\Lambda)
		\ge2(1-\gamma)J_{\Lambda,\pi}(a^j)
		-C_{\pi,\gamma}\eta^2.
	\]
	Taking the finite union over $j$, then letting
	$\eta\downarrow0$, $\gamma\downarrow0$, and
	$\theta\downarrow0$, gives
	\[
		\limsup_{\varepsilon\downarrow0}\varepsilon\log
		P_0^{\ann}\bigl(
		(X_\varepsilon(t_1),\ldots,X_\varepsilon(t_n))\in F;
		\ \|X_\varepsilon\|_\infty\le R\bigr)
		\le-\inf_{\substack{x\in F\\ \Lambda\in\mathscr L}}
		J_{\Lambda,\pi}(x).
	\]
	Letting $R\to\infty$ and using
	Lemma~\ref{lem:compact-containment} proves the compact case.

	For a general closed set, put $F_K:=F\cap[-K,K]^n$. Apply the compact
	case to $F_K$, use compact containment for its complement, and let
	$K\to\infty$.
\end{proof}

	\subsection{Path-space upper bound for closed sets}\label{Path-space}

	To prove the path-space upper bound for closed sets, we need the following general Schilder-type lemma.
	
	\begin{lemma}
		\label{lem:partition-energy}
		Given a compact set $\mathcal{H} \subset C_0([0,1])$, we have
		\begin{equation}\label{eq:minimax-partition}
			\inf_{h \in \mathcal{H}} \int_0^1h'(s)^2 \dd s =\sup_\pi  \inf_{h \in \mathcal{H}} \sum_{i=1}^n\frac{\left (\int_{t_{i-1}}^{t_i} h'(s) \dd s \right)^2}{t_i-t_{i-1}},
		\end{equation}
		where the supremum runs over all finite partitions
		$\pi=\{0=t_0<t_1<\cdots<t_n=1\}$, and the integral on the
		left is understood as $+\infty$ when
		$h\notin\mathbb H_0^1([0,1])$.
	\end{lemma}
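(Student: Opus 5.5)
We would prove \eqref{eq:minimax-partition} by a compactness argument along a refining sequence of dyadic partitions. Throughout, the term $\int_{t_{i-1}}^{t_i}h'(s)\,\dd s$ on the right is read as the increment $h(t_i)-h(t_{i-1})$, which makes sense for every continuous $h$. For a partition $\pi$ we put
\[
	S_\pi(h):=\sum_{i=1}^n\frac{(h(t_i)-h(t_{i-1}))^2}{t_i-t_{i-1}},
	\qquad h\in C_0([0,1]).
\]
We will use three elementary facts about $S_\pi$.
\begin{enumerate}
	\item[(a)] For fixed $\pi$, $S_\pi$ is continuous for the uniform topology, being a finite combination of point evaluations.
	\item[(b)] If $\pi'$ refines $\pi$, then $S_\pi(h)\le S_{\pi'}(h)$. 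This follows from the Cauchy--Schwarz inequality $(\sum_k a_k)^2\le(\sum_k \Delta_k)\sum_k a_k^2/\Delta_k$, applied inside each interval of $\pi$.
	\item[(c)] If $h\in\mathbb H_0^1([0,1])$, then $S_\pi(h)\le\int_0^1h'(s)^2\,\dd s$, again by Cauchy--Schwarz on each interval.
\end{enumerate}

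The inequality ``$\ge$'' in \eqref{eq:minimax-partition} is immediate from (c). For every $\pi$ and every $h\in\mathcal H$ we have $\inf_{g\in\mathcal H}S_\pi(g)\le S_\pi(h)\le\int_0^1h'^2$, where the second bound is trivial when $h\notin\mathbb H_0^1$. Taking the infimum over $h$ and then the supremum over $\pi$ gives the claim.

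For ``$\le$'', let $c$ denote the right-hand side of \eqref{eq:minimax-partition} and assume $c<\infty$. Let $\pi_m$ be the dyadic partition of mesh $2^{-m}$. By (a) and compactness of $\mathcal H$, there is a minimizer $h_m\in\mathcal H$ of $S_{\pi_m}$, so $S_{\pi_m}(h_m)\le c$. Passing to a subsequence, $h_m\to h\in\mathcal H$ uniformly. Fix $k$. For $m\ge k$, (b) gives $S_{\pi_k}(h_m)\le S_{\pi_m}(h_m)\le c$, and letting $m\to\infty$ along the subsequence, (a) yields $S_{\pi_k}(h)\le c$ for every $k$.

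The remaining step, which we expect to be the only delicate point, is to conclude from $\sup_kS_{\pi_k}(h)\le c$ that $h\in\mathbb H_0^1([0,1])$ and $\int_0^1h'^2\le c$; this must cover the case where $h$ is not a priori absolutely continuous. Let $\ell_k$ be the piecewise linear interpolation of $h$ on the grid $\pi_k$. Then $\ell_k\in\mathbb H_0^1$ with $\int_0^1\ell_k'^2=S_{\pi_k}(h)\le c$. Since $h$ is uniformly continuous, $\ell_k\to h$ uniformly. The lower semicontinuity of the Cameron--Martin energy $\mathcal I_{\mathrm{CM}}$ under uniform convergence (already used in the proof of Proposition~\ref{prop:good-rate}) then gives $h\in\mathbb H_0^1$ and $\int_0^1h'^2\le\liminf_k\int_0^1\ell_k'^2\le c$. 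Alternatively, weak compactness in $L^2$ of the bounded family $(\ell_k')$ gives the same conclusion. Therefore $\inf_{g\in\mathcal H}\int_0^1g'^2\le\int_0^1h'^2\le c$, which completes the proof.
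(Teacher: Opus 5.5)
Your proof is correct. It reaches the nontrivial inequality by a somewhat different route from the paper.

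\textbf{The paper's route.} It fixes $a<\inf_{\mathcal H}\int_0^1h'^2$ and quotes the pointwise identity $\int_0^1h'^2=\sup_\pi J_\pi(h)$ for every continuous $h$, from Dembo--Zeitouni, including the value $+\infty$ off $\mathbb H_0^1([0,1])$. This attaches to each $h\in\mathcal H$ a partition $\pi_h$ with $J_{\pi_h}(h)>a$. Continuity of $J_{\pi_h}$ spreads this to a neighbourhood, a finite subcover is extracted, and a common refinement finishes the argument. This is the open-cover (Dini-type) proof of the inf--sup interchange.

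\textbf{Your route.} You run the sequential version of the same principle. You take minimizers $h_m$ of $S_{\pi_m}$ along the nested dyadic partitions and a uniform limit $h\in\mathcal H$. You pass refinement monotonicity to the limit to get $\sup_kS_{\pi_k}(h)\le c$. Then you prove directly, via piecewise linear interpolation and lower semicontinuity of the energy, that this bound forces $h\in\mathbb H_0^1([0,1])$ with $\int_0^1h'^2\le c$.

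\textbf{Comparison.} Both arguments rest on the same three ingredients: continuity of $S_\pi$ for fixed $\pi$, monotonicity under refinement, and compactness of $\mathcal H$. Your version is self-contained. You need only the half of the pointwise identity that matters, and you treat explicitly a limit path that is not a priori absolutely continuous. As a by-product, you show that the supremum may be restricted to dyadic partitions. The paper's version is shorter once the identity is quoted. It is purely topological (no minimizers, no distinguished sequence of partitions), so it applies verbatim to any family of continuous functionals that increases under refinement.
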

	
	\begin{proof}
		The inequality ``$\le$'' follows immediately from Schilder's theorem (see \cite[Theorem~5.2.3]{DemboZeitouni2010}) that
		\begin{equation}\label{Schlider}
			\int_0^1h'(s)^2 \dd s=\sup_\pi \sum_{i=1}^n\frac{\left (\int_{t_{i-1}}^{t_i} h'(s) \dd s \right)^2}{t_i-t_{i-1}}=: \sup_\pi J_{\pi}(h), \ \forall h \in C_0([0,1];\mathbb R),
		\end{equation}
		where the supremum runs over all finite partitions.
		
		For the reverse inequality, let $a<\inf_{h \in \mathcal{H}} \int_0^1h'(s)^2 \dd s$. For every $h \in \mathcal{H}$, \eqref{Schlider} provides a finite partition $\pi_h$ such that
		\[
		J_{\pi_h}(h)>a.
		\]
		Note that for a fixed finite partition $\pi$, the map $h \mapsto J_{\pi}(h)$ is continuous on $C_0([0,1])$. Compactness of $\mathcal{H}$ gives finite neighborhoods $U_i$, corresponding to finite partitions $\pi_i$, $1 \le i \le m$, such that $\cup_{1 \le i \le m}U_i$ covers $\mathcal{H}$ and
		\[
		J_{\pi_i}(h)>a, \ \forall h \in U_i.
		\]
		Let $\pi_*$ be a common refinement. Because Cauchy's inequality gives
		\[
		\frac{(x_k-x_i)^2}{t_k-t_i}
		\le \frac{(x_j-x_i)^2}{t_j-t_i}+
		\frac{(x_k-x_j)^2}{t_k-t_j},
		\quad (t_i<t_j<t_k),
		\]
		we have $J_{\pi_*}\ge J_{\pi_j}$ on every path, and therefore
		\[
		\inf_{h \in \mathcal{H}}J_{\pi_*}(h)\ge a.
		\]
		Taking the supremum over $\pi_*$ and then letting $a\uparrow \inf_{h \in \mathcal{H}} \int_0^1h'(s)^2 \dd s$ proves \eqref{eq:minimax-partition}.
	\end{proof}
	
	\begin{lemma}[Environment--partition identification]\label{lem:environment-partition}
	Assume $H_4$. If $F\subset C_0([0,1];\R)$ is compact, then
	\[
		\sup_\pi\inf_{\substack{f\in F\\ \Lambda\in\mathscr L}}
		J_{\Lambda,\pi}\bigl(f(t_1),\ldots,f(t_n)\bigr)
		=\inf_{f\in F}I(f).
	\]
\end{lemma}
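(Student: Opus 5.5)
We prove the two inequalities separately, following the pattern of Lemma~\ref{lem:partition-energy}, but with the compact set now taken to be $F\times\mathscr L$ rather than a set of Cameron--Martin paths.

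\emph{The inequality ``$\le$''.} Fix a partition $\pi$ and $f\in F$. Pick a minimizer $\Lambda$ in \eqref{eq:rate-function} and set $h=\Lambda\circ f$. Then $J_{\Lambda,\pi}(f(t_1),\ldots,f(t_n))=\tfrac12 J_\pi(h)$, where $J_\pi$ is as in \eqref{Schlider}. Since $\tfrac12 J_\pi(h)\le\mathcal I_{\mathrm{CM}}(h)=I(f)$, we get $\inf_{f,\Lambda}J_{\Lambda,\pi}\le I(f)$. Taking the infimum over $f\in F$ and then the supremum over $\pi$ gives the claim. If $I(f)=\infty$ there is nothing to prove.

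\emph{The inequality ``$\ge$''.} Consider the set $\mathcal K:=F\times\mathscr L$, which is compact in $C_0([0,1])\times C_{\loc}(\R)$ because both factors are compact. Define
\[
G_\pi(f,\Lambda):=\tfrac12 J_\pi(\Lambda\circ f),\qquad G(f,\Lambda):=\mathcal I_{\mathrm{CM}}(\Lambda\circ f).
\]
By \eqref{Schlider}, $G=\sup_\pi G_\pi$ pointwise on $\mathcal K$, including the case where both sides are $+\infty$ (Schilder's identity is stated for all $h\in C_0$). Each $G_\pi$ is continuous on $\mathcal K$: $G_\pi$ depends only on the finitely many values $\Lambda(f(t_i))$, and evaluation is jointly continuous because $f$ ranges in a bounded set and $\Lambda$ converges locally uniformly. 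Moreover $\inf_{\mathcal K}G=\inf_{f\in F}I(f)$ by the definition of $I$.

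Now repeat the compactness argument of Lemma~\ref{lem:partition-energy}. Let $a<\inf_{\mathcal K}G$. For each point of $\mathcal K$ choose a partition $\pi$ with $G_\pi>a$ there; by continuity this inequality holds on an open neighbourhood. Extract a finite subcover with partitions $\pi_1,\ldots,\pi_m$ and take a common refinement $\pi_*$. The Cauchy inequality used in Lemma~\ref{lem:partition-energy}, applied to the points $\Lambda(f(t_i))$, shows that $G_{\pi_*}\ge G_{\pi_j}$ for every $j$. Hence $\inf_{\mathcal K}G_{\pi_*}\ge a$. Since $\inf_{\mathcal K}G_{\pi_*}$ is exactly the inner infimum on the left-hand side for $\pi=\pi_*$, letting $a\uparrow\inf_F I$ finishes the proof; the case $\inf_F I=\infty$ is covered by taking $a$ arbitrary.

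The main point to check is the joint continuity of $(f,\Lambda)\mapsto\Lambda(f(t))$ on $\mathcal K$. It holds because $\|f\|_\infty$ is bounded on the compact set $F$, so only $\Lambda$ restricted to a fixed compact interval matters, and there locally uniform convergence combined with the equicontinuity of the compact family $\mathscr L$ gives continuity.
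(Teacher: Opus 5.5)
Your proof is correct and follows essentially the paper's route. The paper pushes $F\times\mathscr L$ forward under $(f,\Lambda)\mapsto\Lambda\circ f$ to the compact set $\mathcal H(F)\subset C_0([0,1])$ and applies Lemma~\ref{lem:partition-energy}, while you rerun that lemma's cover-and-refine argument directly on $F\times\mathscr L$. One minor remark: equicontinuity of $\mathscr L$ is not needed for the joint continuity of $(f,\Lambda)\mapsto\Lambda(f(t))$, because locally uniform convergence together with continuity of the limit $\Lambda$ already suffices.
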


\begin{proof}
	The set
	\[
		\mathcal H(F):=
		\{\Lambda\circ f:\Lambda\in\mathscr L,\ f\in F\}
	\]
	is compact in $C_0([0,1])$. Indeed, $F$ has bounded range and the
	composition map is continuous for locally uniform convergence of
	$\Lambda$ and uniform convergence of $f$. Moreover,
	\[
		J_{\Lambda,\pi}(f)=\frac12J_\pi(\Lambda\circ f).
	\]
	Applying Lemma~\ref{lem:partition-energy} to $\mathcal H(F)$ gives
	\begin{align*}
		\sup_\pi\inf_{\substack{f\in F\\ \Lambda\in\mathscr L}}
		J_{\Lambda,\pi}(f)
		&=\inf_{h\in\mathcal H(F)}\mathcal I_{\mathrm{CM}}(h)\\
		&=\inf_{f\in F}\min_{\Lambda\in\mathscr L}
		\mathcal I_{\mathrm{CM}}(\Lambda\circ f)\\
		&=\inf_{f\in F}I(f).
	\end{align*}
\end{proof}

	Now we can derive the upper bound \eqref{eq:ldp-upper}.

	\begin{proposition}\label{P5.5}
		Assume $H_1$--$H_4$ hold. Then for every closed set
		$F\subset C_0([0,1];\R)$,
		\[
			\limsup_{\varepsilon\downarrow0}
			\varepsilon\log P_0^{\ann}(X_\varepsilon\in F)
			\le-\inf_{\varphi\in F}I(\varphi).
		\]
	\end{proposition}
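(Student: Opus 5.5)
The plan is the standard route from finite-dimensional bounds plus exponential tightness to a path-space upper bound, as in \cite{DemboZeitouni2010}, with Lemma~\ref{lem:environment-partition} handling the environment. First we reduce to compact sets. Fix $L\ge1$ and let $K_L$ be the deterministic compact set from Proposition~\ref{prop:exp-tight}. Then
\[
P_0^{\ann}(X_\varepsilon\in F)\le P_0^{\ann}(X_\varepsilon\in F\cap K_L)+P_0^{\ann}(X_\varepsilon\notin K_L).
\]
Hence it suffices to prove the bound for the compact set $F_L:=F\cap K_L$, since $\inf_{F_L}I\ge\inf_F I$. Afterwards we let $L\to\infty$, using $\max\{-\inf_F I,-L\}$ in the limit.

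For compact $F$, let $a<\inf_{f\in F}I(f)$, the case $\inf_F I=0$ being trivial. Lemma~\ref{lem:environment-partition} provides a finite partition $\pi=\{0=t_0<\dots<t_n=1\}$ with
\[
\inf_{f\in F,\ \Lambda\in\mathscr L}J_{\Lambda,\pi}(f(t_1),\dots,f(t_n))>a.
\]
Let $p_\pi(f):=(f(t_1),\dots,f(t_n))$. The set $p_\pi(F)\subset\R^n$ is compact, hence closed, because $p_\pi$ is continuous. Since $\{X_\varepsilon\in F\}\subset\{p_\pi(X_\varepsilon)\in p_\pi(F)\}$, Proposition~\ref{prop:fd-upper} gives
\[
\limsup_{\varepsilon\downarrow0}\varepsilon\log P_0^{\ann}(X_\varepsilon\in F)\le-\inf_{x\in p_\pi(F),\,\Lambda\in\mathscr L}J_{\Lambda,\pi}(x)\le-a.
\]
Letting $a\uparrow\inf_F I$ proves the compact case.

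The only subtle point is the order of quantifiers. Lemma~\ref{lem:environment-partition} lets one partition work uniformly over both $f\in F$ and $\Lambda\in\mathscr L$, which avoids the need for the usual local-ball covering argument. No neighbourhood enlargement is needed, because Proposition~\ref{prop:fd-upper} already holds for arbitrary closed sets in $\R^n$ and the partition infimum is taken directly over the image set $p_\pi(F)$. The main thing to check is therefore that Lemma~\ref{lem:environment-partition} applies to the compact set $F_L$. It does, since $\mathcal H(F_L)$ is compact as shown there.
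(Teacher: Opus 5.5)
Your proposal is correct and follows the paper's proof step for step. You reduce to the compact set $F\cap K_L$ via Proposition~\ref{prop:exp-tight}, apply Proposition~\ref{prop:fd-upper} to the compact finite-dimensional image for a partition supplied by Lemma~\ref{lem:environment-partition}, and then combine with the $-L$ term and let $L\to\infty$; the only cosmetic difference is that you fix $a<\inf_F I$ and choose a single partition, whereas the paper bounds for every $\pi$ and then takes the supremum, which is equivalent.
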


	\begin{proof}
		Fix $L>0$ and let $K_L$ be the deterministic compact set from
		Proposition~\ref{prop:exp-tight}. Put $F_L:=F\cap K_L$. Then
		\[
			P_0^{\ann}(X_\varepsilon\in F)
			\le P_0^{\ann}(X_\varepsilon\in F_L)
			+P_0^{\ann}(X_\varepsilon\notin K_L).
		\]
		For every finite partition, Proposition~\ref{prop:fd-upper} applied to
		the compact finite-dimensional image of $F_L$ gives
		\[
			\limsup_{\varepsilon\downarrow0}\varepsilon\log
			P_0^{\ann}(X_\varepsilon\in F_L)
			\le-
			\inf_{\substack{f\in F_L\\ \Lambda\in\mathscr L}}
			J_{\Lambda,\pi}(f).
		\]
		Taking the supremum over $\pi$ and applying
		Lemma~\ref{lem:environment-partition},
		\[
			\limsup_{\varepsilon\downarrow0}\varepsilon\log
			P_0^{\ann}(X_\varepsilon\in F_L)
			\le-\inf_{f\in F_L}I(f)
			\le-\inf_{f\in F}I(f).
		\]
		Therefore
		\[
			\limsup_{\varepsilon\downarrow0}\varepsilon\log
			P_0^{\ann}(X_\varepsilon\in F)
			\le\max\left\{-\inf_{f\in F}I(f),-L\right\}.
		\]
		If $\inf_FI<\infty$, choose $L>\inf_FI$; otherwise let
		$L\to\infty$.
	\end{proof}

	\medskip
	
	\noindent{\bf Acknowledgements}\ This work is partly supported by National Key R\&D Program of China (No. 2022YFA1006000), National Natural Science Foundation of China (Nos. 12131019, 12201598, 11971456, 11721101) and the Fundamental Research Funds for the Central Universities, China (Nos. WK0010250108, WK0010000081).

\end{document}